\pgfplotsset{compat=1.7}
\definecolor{lightgray}{rgb}{.85,.85,.85}
\newtheorem{theorem}{Theorem}[section]
\newtheorem{lemma}[theorem]{Lemma}
\newtheorem{proposition}[theorem]{Proposition}
\newtheorem{corollary}[theorem]{Corollary}
\newtheorem{algorithm}[theorem]{Algorithm}
\newtheorem{question}[theorem]{Question}
\theoremstyle{remark}
\newtheorem{example}[theorem]{Example}
\theoremstyle{remark}
\newtheorem{remark}[theorem]{Remark}
\theoremstyle{definition}
\newtheorem{definition}[theorem]{Definition}
\theoremstyle{remark}
\newtheorem{rmk/}{Remark}
\newcommand{\R}{\mathbb{R}}
\newcommand{\Q}{\mathbb{Q}}
\newcommand{\Z}{\mathbb{Z}}
\newcommand{\C}{\mathbb{C}}
\newcommand{\Hom}{\operatorname{Hom}}
\newcommand{\Ext}{\operatorname{Ext}}
\newcommand{\im}{\operatorname{im}}
\renewcommand{\l}{\ell}
\newcommand{\gr}{\operatorname{gr}}
\newcommand{\BN}{\mathcal{BN}}
\newcommand{\F}{\mathcal{F}}
\newcommand{\til}[1]{\widetilde{#1}}
\renewcommand{\b}[1]{\overline{#1}}
\newcommand{\lr}[1]{\vert {#1} \vert}
\newcommand{\rar}[1]{\xrightarrow{#1}}
\newcommand{\D}{\mathcal{D}}
\renewcommand{\o}{\otimes}
\newcommand{\s}{\mathfrak{s}} % Lee generator
\renewcommand{\:}{\colon}
\newcommand{\bF}{\mathbb{F}}
\newcommand{\la}{\lambda}
\newcommand{\G}{\mathcal{G}}
\newcommand{\inv}{^{-1}}
\newcommand{\st}{\ | \ }
\newcommand{\resp}{resp.\ }
\newcommand{\CKh}{\mathit{CKh}} % Khovanov chain complex
\newcommand{\CKhD}{\mathit{CKh}_{\D}} %localized complex
\newcommand{\CFK}{\mathit{CFK}}
\newcommand{\HFK}{\mathit{HFK}}
\newcommand{\CKhred}{\widetilde{\CKh}} % reduced chain cpx
\newcommand{\Khred}{\widetilde{\Kh}}
\newcommand{\essl}{\mathfrak{sl}}
\newcommand{\grq}{\gr_q}
\newcommand{\grt}{\gr_t}
\newcommand{\grtred}{\widetilde{\gr}_t} % reduced \gr_t
\newcommand{\Kh}{\mathit{Kh}} 
\newcommand{\KhD}{\Kh_{\D}}
\newcommand{\Kg}{\mathit{Kg}} % khov gens
\newcommand{\CKhredD}{\CKhred_{\D}}
\newcommand{\KhredD}{\Khred_{\D}}
\newcommand{\sat}{s_t}
\newcommand{\bp}{\mathbf{p}} % a point in the lattice Z^3
\newcommand{\colora}{\mathbf{a}} % color of a planar circle
\newcommand{\colorb}{\mathbf{b}} % other color of planar circle
\newcommand{\obar}{\bar{o}} % opposite orientation to o
\renewcommand{\r}{\mathfrak{r}}
\newcommand{\onto}{\twoheadrightarrow} % onto map
\title[Concordance invariants from $U(1) \times U(1)$-equivariant Khovanov homology]{Concordance invariants from $U(1) \times U(1)$-equivariant Khovanov homology}
\author[R. Akhmechet]{Rostislav Akhmechet}
\address{Department of Mathematics, Columbia University, New York NY 10027}
\email{\href{mailto:akhmechet@math.columbia.edu}{akhmechet@math.columbia.edu}}
\author[M. Zhang]{Melissa Zhang}
\address{Department of Mathematics, UC Davis, One Shields Ave., Davis, CA 95616-8633, U.S.A.
}
\email{\href{mailto:mlzhang@ucdavis.edu}{mlzhang@ucdavis.edu}}
\begin{document}

\maketitle

\begin{abstract}
    We study Khovanov homology over the Frobenius algebra $\bF[U,V,X]/((X-U)(X-V))$, or $U(1) \times U(1)$-equivariant Khovanov homology, and extract two families of concordance invariants using the algebraic $U$-power and $V$-power filtrations on the chain complex. We also further develop the reduced version of the theory and study its behavior under mirroring. 
\end{abstract}
\tableofcontents

%----------------------------------------------------
\section{Introduction}

Khovanov homology ($\Kh$), or $\essl(2)$ link homology, was introduced by Khovanov in \cite{Kh1} and has since been studied in and applied to many contexts. In particular, numerical invariants such as Rasmussen's $s$ invariant \cite{Rasmussen}, distilled from homology-type invariants of knots (e.g.\ Khovanov-Lee homology) have proven remarkably useful in low-dimensional topology. The first prominent use of $s$ was Rasmussen's purely combinatorial reproof of the Milnor conjecture on the slice genus of torus knots, which was previous proven by Kronheimer and Mrowka using gauge theory \cite{KM}. 
More recently, Piccirillo \cite{Piccirillo} used $s$ to show that the Conway knot is not slice, answering a famous, long-standing question. 

Heegaard Floer homology is another homology-type invariant in low-dimensional topology, introduced by Ozsv\'ath and Szab\'o in \cite{OS-3mfld} as a 3-manifold invariant. Knot Floer homology ($\HFK$), defined by Ozsv\'ath and Szab\'o \cite{O-S_HFK} and independently by Rasmussen \cite{Rasmussen_HFK}, is an invariant for knots in 3-manifolds defined by adding a filtration to the 3-manifold's Heegaard Floer complex. 

Khovanov homology has its roots in representation theory, whereas Heegaard Floer homology is built on geometric foundations; nevertheless, the algebraic structure of their complexes have many similarities. For example, $\Kh$ and $\HFK$ are both defined as chain complexes with distinguished generators, and both complexes are bigraded. As a result, many constructions have counterparts on either side. For example, Rasmussen's distillation of $s$ from $\Kh$ is akin to Ozsv\'ath and Szab\'o's distillation of the concordance invariant $\tau$ from $\HFK$ \cite{O-S_tau}: both invariants measure the internal grading of a standard nontorsion portion of the homology.

Nowadays, $\Kh$ and $\HFK$ come in many different flavors, depending on the algebraic properties of the base ring. In this article we will focus in particular on $U(1) \times U(1)$-equivariant Khovanov homology, previously studied by Sano in \cite{Sano-divisibility,Sano-fixing-functoriality} and Khovanov-Robert in \cite{KRfrobext}. This flavor is an extension of the so-called `universal" $U(2)$-equivariant Khovanov homology  (denoted $\F_5$ in \cite{Khfrobext}). The underlying ground ring $R$ and Frobenius algebra $A$ are given by 
\[
 R = \bF[U,V], \hskip2em A = R[X]/((X-U)(X-V))
\]
where $\bF$ is a field. Similarly, the minus flavor of knot Floer homology over the ground ring $\mathbb{F}_2[U,V]$ can also be viewed as the most general flavor. 

For a knot $K \subset S^3$ and $t\in[0,2]$, Ozsv\'ath, Stipsicz, and Szab\'o defined a concordance homomorphism $\Upsilon$ (from the smooth knot concordance group to the group of piecewise-linear functions $[0,2] \to \R$)
using the minus complex $\CFK^-(K, S^3)$ \cite{OSS-upsilon}. Livingston provided a useful visual reinterpretation of $\Upsilon$ \cite{livingston-upsilon}, which we will also refer to throughout the text. The invariant $\Upsilon$ recovers $\tau$ (as the derivative at $t=2$) and has been used to identify structures in the smooth knot concordance group \cite{O-S_tau, OSS-upsilon, Hom_HFK_and_smooth_concordance, DHST}. 
The goal of this article is to define a counterpart $\sat$ to $\Upsilon$ coming from Khovanov homology and study what types of information $\sat$ may illuminate. There are indeed already many $\Upsilon$-like invariants coming from Khovanov homology: 
\begin{itemize}
    \item Lewark-Lobb's gimel invariant $\gimel$ \cite{Lewark-Lobb}, from $\essl(n)$ knot homology defined via $\C[x]/(x^n - x^{n-1})$-modules; this invariant is ``interesting" for quasi-alternating knots, which are ``non-interesting" from the point of view of Khovanov homology and knot Floer homology \cite{Manolescu-Ozsvath-qalt}.
    \item Grigsby-Licata-Wehrli's \cite{GLW} annular Rasmussen invariants $d_t$, from annular Khovanov-Lee homology; 
    \item Sarkar-Seed-Szab\'o's generalized Rasmussen invariants $s^\mathcal{U}$ from their perturbation on Szab\'o's geometric spectral sequence \cite{Sarkar-Seed-Szabo};  
    \item Truong-Zhang's annular Sarkar-Seed-Szab\'o invariants $s_{r,t}$, from annular Sarkar-Seed-Szab\'o homology \cite{Truong-Zhang}; 
    \item and Ballinger's $\Phi_K(\alpha)$, from using both Lee's perturbation as well as Rasmussen's $E(-1)$ differential on Khovanov homology \cite{Ballinger}.
\end{itemize}
All of the above invariants come from mixing two fundamentally different filtrations on the chain complex; for example, $\gimel$ mixes the quantum filtration with the $x$-power filtration on the differential. 

The motivation for the present work is to explore the information that can be gleaned from mixing the $U$-power and $V$-power filtrations on $U(1) \times U(1)$-equivariant Khovanov homology; this construction is in some ways more analogous to the original definition of $\Upsilon$. 
This is part of a larger research program to discover the topological information that can mined from working with $U(1) \times U(1)$-equivariant Khovanov homology, which has a larger ground ring than ``universal" $U(2)$-equivariant Khovanov homology. This enlargement provides additional flexibility, shown to be useful in recent developments: these include another method of {fixing the sign ambiguity in} Khovanov's original theory  \cite{Sano-fixing-functoriality}, and {defining an equivariant analogues of annular link homology} \cite{Akh, AKh-Kh}.

\subsection{New Invariants}

We offer two families of $\Upsilon$-like Rasmussen invariants for knots in $S^3$.
The first, $s_t$, is extracted from $U(1) \times U(1)$-equivariant Khovanov homology (denoted $\Kh(K)$), by measuring the complex using a filtration grading that mixes the quantum, $U$-power, and $V$-power gradings. 

\begin{theorem}
For a link $L$, there is a function $s_t(L)
: [0,2] \to \R$ which is piecewise-linear continuous. Moreover, if $K$ and $K'$ are concordant knots then $s_t(K) = s_t(K')$ for all $0\leq t \leq 2$. 
\end{theorem}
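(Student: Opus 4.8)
The plan is to imitate the construction of Rasmussen's $s$ and of Ozsv\'ath--Stipsicz--Szab\'o's $\Upsilon$, now over the Frobenius algebra $A = R[X]/((X-U)(X-V))$ with $R=\bF[U,V]$. First I would fix a diagram $D$ for $L$ and equip the Khovanov chain complex $\CKh(D)$ over $R$ with the integral quantum grading $\gr_q$ together with the two decreasing filtrations by $U$-power and $V$-power. For $t\in[0,2]$ I would then interpolate, defining a single $\R$-valued decreasing filtration $\gr_t$ on $\CKh(D)$ — schematically $\gr_t \sim \gr_q - t\cdot(U\text{-power}) - (2-t)\cdot(V\text{-power})$ up to normalization — chosen so that the Khovanov differential is $\gr_t$-filtered for every $t$. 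The invariant $s_t(D)$ is the $\gr_t$-filtration level of a distinguished homology class: after inverting $U-V$ one gets $A[(U-V)^{-1}]\cong R[(U-V)^{-1}]\times R[(U-V)^{-1}]$ via the idempotents $(X-V)/(U-V)$ and $(X-U)/(V-U)$, producing a Lee/Bar-Natan-type deformation whose homology is free with a basis $\{\mathfrak{s}_o\}$ indexed by orientations $o$ of $L$; $s_t(D)$ should record the maximal $\gr_t$-grading needed to realize the canonical class $\mathfrak{s}_o$ coming from the given orientation (for a knot, the symmetrized pair $\mathfrak{s}_o\pm\mathfrak{s}_{\overline o}$), exactly as in Rasmussen's and Livingston's pictures.

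Next I would show $s_t$ is a link invariant. The chain homotopy equivalences induced by Reidemeister moves preserve $\gr_q$, the $U$-power filtration, and the $V$-power filtration up to a global shift depending only on writhe and component count, which is absorbed by the usual normalization; hence they are $\gr_t$-filtered, and since they carry the canonical class to the canonical class, $s_t(D)$ is independent of $D$. Piecewise-linearity and continuity in $t$ then follow formally: $\gr_t$ is an affine function of $t$ on each fixed generator, so $s_t(L)$ is a finite minimum (or maximum) over the finitely many generators of a minimal model of quantities that are linear in $t$; a finite min of continuous affine functions is piecewise-linear and continuous on $[0,2]$.

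For concordance invariance, let $\Sigma\subset S^3\times[0,1]$ be a concordance from $K$ to $K'$. Decomposing $\Sigma$ into births, deaths, and saddles and using functoriality of equivariant Khovanov homology gives an induced map $\CKh(K)\to\CKh(K')$; since $\chi(\Sigma)=0$ this map preserves $\gr_q$ and is filtered for both the $U$-power and $V$-power filtrations, hence $\gr_t$-filtered of degree $0$. On the Lee/Bar-Natan deformation, the map for a connected genus-zero cobordism between knots sends $\mathfrak{s}_o$ to $\mathfrak{s}_{o'}$ up to multiplication by a unit times a power of $U-V$, so it takes the canonical class to the canonical class. A $\gr_t$-filtered degree-$0$ map cannot decrease filtration level of a class represented by the canonical generator, giving $s_t(K)\le s_t(K')$; applying the same argument to the reversed concordance $\overline\Sigma$ from $K'$ to $K$ yields the reverse inequality, so $s_t(K)=s_t(K')$ for all $t\in[0,2]$.

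The main obstacle I anticipate lies in the bookkeeping that glues these steps together: one must verify that the $U$-power and $V$-power filtrations are \emph{simultaneously} respected (up to controlled shift) by Reidemeister and cobordism maps, and pin down exactly how the unit-times-$(U-V)^k$ rescaling of the canonical generator interacts with the two filtrations — specifically that its contributions to the $U$- and $V$-filtrations combine so that the net $\gr_t$-degree is unchanged, which is what makes the sandwiching argument close. Establishing the Lee-type structural theorem over $\bF[U,V]$ (simplicity of the localized homology, a canonical-generator basis, and naturality of these generators under cobordism) with enough filtration control to run that argument is the technical heart of the proof.
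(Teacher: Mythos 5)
Your outline matches the paper's strategy in its essentials: interpolate $\gr_q$ with the $U$- and $V$-power filtrations to get $\gr_t$, check the differential is filtered, show Reidemeister invariance, and prove concordance invariance by sandwiching via a connected cobordism map and its reverse, using the Lee-type structure of the $(V-U)$-localized homology. Two remarks, the second of which is a genuine gap.

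First, a small definitional mismatch: you propose to read off $s_t$ from the filtration level of the canonical class $\s_o$ (or a symmetrized combination for a knot), whereas the paper defines $\gr_t(D)$ as the supremum of $\gr_t$ over \emph{all} nonzero classes in $\Kh(D)/T(D)$. For a knot the free part of $\Kh(D)$ has rank $2$, and the class actually realizing the supremum in the paper's worked example is $\gamma/(V-U)$ where $\gamma=\tilde{\s}_o\pm\tilde{\s}_{\bar o}$ -- i.e.\ a combination divided by $V-U$, not $\s_o$ itself. Taking the sup over all nontorsion classes is what makes the sandwiching argument close cleanly: Corollary \ref{cor:induced map is injective mod torsion} gives injectivity mod torsion, so a maximizing class is carried to a nontorsion class whose $\gr_t$ is at least as large (using $\chi=0$), with no need to track a $(V-U)^k$-rescaling of a canonical generator. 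Your version can be made to work, but you would then have to argue separately that the canonical-class formulation agrees with the sup formulation, which is not automatic.

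The genuine gap is the continuity argument. You assert that $s_t(L)$ is ``a finite minimum (or maximum) over the finitely many generators of a minimal model of quantities that are linear in $t$,'' but there is no finite set of $\bF$-generators to take a min over. The complex is finitely generated and free over $R=\bF[U,V]$, hence \emph{infinite}-dimensional over $\bF$; any minimal model is still a complex of free $R$-modules, and $\Kh(D)/T(D)$ is a free $R$-module of positive rank whose nonzero elements form an infinite set. The supremum defining $\gr_t(D)$ is therefore a priori over infinitely many classes (and infinitely many chain representatives), and the ``finite min of affine functions'' template does not apply directly. The paper's proof of Theorem \ref{thm:continuity} instead argues geometrically: for $t\in(0,2)$ the filtration is discrete (Lemma \ref{lem:grt defines a filtration}), so the boundary plane $\partial F^t_\star$ in $(\gr_U,\gr_V,\gr_q)$-space meets only finitely many occupied lattice points at each $t$, and a compactness/discreteness argument bounds the number of corners on any compact subinterval of $(0,2)$. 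At $t=0,2$ the filtration is \emph{not} discrete -- property \ref{item:F2} fails because multiplying by $U$ (resp.\ $V$) does not change $\gr_0$ (resp.\ $\gr_2$) -- and right-continuity at $t=0$ needs a separate, more delicate argument in which one repeatedly improves the maximizing cycle and shows the process terminates after at most $\gr_V(\mathbf{x}_j)+1$ steps. Your outline as written does not see this distinction, and it is precisely the part of the theorem that does not ``follow formally.''
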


The above theorem is proven as Theorem \ref{thm:concordance-invariant} and
Theorem \ref{thm:continuity} below. The second concordance invariant, $\til{s}_t$, is defined analogously to $s_t$ but using reduced version of $U(1)\times U(1)$-equivariant Khovanov homology (denoted $\Khred(K)$), which we define in Section \ref{sec:reduced}. This version is more closely analogous to knot Floer homology since the free part of the homology is rank 1 (``there is one tower"). 

Independently from our work, Sano and Sato are also studying a reduced version of $U(1) \times U(1)$-equivariant Khovanov homology in an upcoming article \cite{Sano-Sato-upcoming}.

\subsection{Acknowledgements} We thank Jen Hom, Mikhail Khovanov, Gage Martin, and {Taketo Sano} for useful discussions. 
The first author was partially supported by NSF Grant DMS-2105467, NSF RTG Grant DMS-1839968, and the Jefferson Scholars Foundation during the writing of this article. This article was completed while the second author was supported by the National Science Foundation under Grant No.~DMS-1928930, while she was in residence at the Simons Laufer Mathematical Sciences Institute (previously known as MSRI) in Berkeley, California, during the Fall 2022 semester. 

%----------------------------------------------------
\section{Background}

\subsection{Filtrations}
\label{sec:filtrations}

We begin with the algebraic background on the types of filtrations on chain complexes featured in the later sections. Let $\bF$ be a field and let $M$ be an $\bF$-vector space. 
\begin{definition}
\label{def:filtration}
A \emph{(descending) filtration} of $M$ consists of a family $F = \{F_\la\}_{\la \in \R}$ of $\bF$-subspaces of $M$, satisfying 
\begin{enumerate}
    \item $F_\la \subset F_{\la'}$ whenever $\la \geq \la'$. 
    \item $\bigcup_{\la \in \R} F_\la = M$. 
    \item There exists $\mu\in \R$ such that  $F_\la = \{0\}$ for all $\la > \mu$. \label{item:filt is zero}
\end{enumerate}
\end{definition}

Given a filtration $F$, we can define a function $\gr_F \: M\to \R \cup \{\infty\}$, called the \emph{grading induced by $F$}, by setting 
\begin{equation}
    \label{eq:gr_F}
    \gr_F(m) = \sup \{ \la  \mid m\in F_\la \}.
\end{equation}
Note that $\gr_F(m) = \infty$ if and only if $m=0$. 
We also define
\begin{equation}
\label{eq:gr_F(M)}
    \gr_F(M) = \sup \{ \gr_F(m) \mid m\in M\setminus \{0\}\}.
\end{equation}

The filtrations we consider will often satisfy the following additional properties. 

\begin{definition}
\label{def:filtration admits a grading and discrete}
Consider a filtration $F = \{F_\la\}_{\la\in \R}$ of $M$.  
\begin{enumerate}[label= (F\arabic*)]
    \item We say $F$ \emph{admits a grading} if for all nonzero $m\in M$, there exists $\la(m) \in \R$ such that $m\in F_{\la(m)}$ and $m\not\in F_{\la}$ for any $\la > \la(m)$.  \label{item:F1}
    \item We say $F$ is \emph{discrete} if each $F_\la$ is finite-dimensional. \label{item:F2}
\end{enumerate}
\end{definition}

 As an example of a filtration which does not satisfy \ref{item:F1}, consider $M= \bF$ and set $F_\la = \bF$ if $\la<0$, $F_\la = \{0\}$ if $\la\geq 0$.

\begin{remark}
Property \ref{item:F1} is equivalent to saying that for all nonzero $m\in M$, $\gr_F(m)$ is actually a maximum rather than a supremum; in particular, $\gr_F(m) = \la(m)$; compare this with \cite[Definition 5]{GLW}. The term \emph{discrete} in \cite[Definition 3]{GLW} and \cite[Definition 3.1]{livingston-upsilon} means that $F_{\la'}/F_{\la}$ is finite dimensional for all $\la \geq \la'$. All of our filtrations will satisfy item \eqref{item:filt is zero} of Definition \ref{def:filtration}, so this notion of discreteness is equivalent to property \ref{item:F2}. 
\end{remark}

\subsubsection{Filtered vector spaces with distinguished generators}
\label{sec:filtered-distinguished-generators}

Before continuing, we pause to describe which kinds of vector spaces and filtrations will be considered in the following sections. The underlying vector space is the $U(1)\times U(1)$-equivariant Khovanov chain complex $\CKh(D)$, recalled in Section \ref{sec:link homology}. It is a chain complex of free $\bF[U,V]$-modules with a finite distinguished $\bF[U,V]$-basis denoted $\Kg_0(D)$ (the \emph{Khovanov generators}; cf.\ Section \ref{sec:kh-gradings}). 
Then as an $\bF$-vector space, $\CKh(D)$ is free with basis $Kg(D) = \{U^m V^n g \mid g\in \Kg_0(D),\ m,n\geq 0\}$. 
For each $t\in [0,2]$, we will define a filtration $F^t$ on $\CKh(D)$ which is preserved by the differential and which admits a grading. Moreover, for $t\in (0,2)$, we show that $F^t$ is discrete; see Lemma \ref{lem:grt defines a filtration}. The values $t=0,2$ will be treated separately. 

\subsubsection{Extracting numerical invariants}
\label{sec:Extracting numerical invariants}
We are ultimately interested in extracting numerical invariants from homology, so we discuss induced filtration gradings on subspaces and quotients. To that end, let $F$ be a filtration on $M$, and let $N\leq M$ be a subspace. Then $N$ inherits a filtration $F'$ by setting $F'_\la = F_\la \cap N$.  For quotients, let $\b{F}$ denote the induced filtration on $M/N$, defined by setting $\b{F}_\la$ to consist of all cosets $m+N \in M/N$ which have a representative in $F_\la$. Equivalently, $\b{F}_\la$ is the image of $F_\la$ under the projection $M\to M/N$. Define the induced grading $\gr_{\b{F}}$ as in \eqref{eq:gr_F}.

\begin{lemma}
\label{lem:filtrations} 
Suppose the filtration $F$ of $M$ is discrete and admits a grading. 
\begin{enumerate}
    \item The filtrations $F'$ of $N$ and $\b{F}$ of $M/N$, for any subspace $ N \leq M$, are discrete and admit gradings.
    \item If $M\neq \{0\}$,  $\gr_F(M) = \gr_F(m)$ for some nonzero $m\in M$.
    \item If $m+N \in M/N$ is nonzero, there exists $m'\in M$ with $\gr_{\b{F}}(m+N) = \gr_F(m')$. 
    \item If $N\subsetneq M$, then there exists $m\in M\setminus N$ such that $\gr_{\b{F}}(M/N) = \gr_F(m)$. 
\end{enumerate}
\end{lemma}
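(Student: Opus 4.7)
The plan is to extract a single finite-dimensional principle and apply it uniformly to all four parts. Namely, if $V$ is a finite-dimensional subspace of $M$ (with the induced filtration $F|_V$, which inherits the grading axiom), then the set $\{\gr_F(v) \mid v \in V \setminus \{0\}\}$ is finite. This is because $\lambda \mapsto \dim(F_\lambda \cap V)$ is a non-increasing function into $\{0, 1, \ldots, \dim V\}$, so it has only finitely many jumps, and every $\gr_F(v)$ for nonzero $v \in V$ equals one such jump value. Consequently, any supremum of $\gr_F$ taken over a nonempty subset of $V \setminus \{0\}$ is realized as a maximum; this is the engine for all the sup-is-max statements below.

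Part (1) for $F'$ on $N$ is nearly tautological: $F'_\lambda = F_\lambda \cap N$ is finite-dimensional, and $\gr_{F'}(n) = \gr_F(n)$ is a maximum for every $n \in N \setminus \{0\}$. Discreteness of $\b{F}$ on $M/N$ is also immediate, since $\b{F}_\lambda$ is the image of the finite-dimensional $F_\lambda$ under the projection. The main step is proving $\b{F}$ admits a grading, and I would handle this together with (3). Given a nonzero class $m + N$, pick any representative $m_0 \in m + N$ and set $\lambda_0 = \gr_F(m_0)$, noting $m_0 \notin N$. Any $m' \in m + N$ with $\gr_F(m') \geq \lambda_0$ must lie in $F_{\lambda_0}$, and hence in the affine subset $m_0 + (N \cap F_{\lambda_0}) \subset F_{\lambda_0}$, which contains no zero vector because $m_0 \notin N$. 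Therefore $\gr_{\b{F}}(m+N) = \sup\{\gr_F(m') \mid m' \in m_0 + (N \cap F_{\lambda_0})\}$, and the principle applied to the finite-dimensional $F_{\lambda_0}$ makes this sup a maximum realized by some $m' \in m + N$; this simultaneously yields (3) and the grading axiom for $\b{F}$.

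For (2), the identical idea works directly in $M$: pick any nonzero $m_0 \in M$, let $\lambda_0 = \gr_F(m_0)$, and observe that every $m \in M$ with $\gr_F(m) \geq \lambda_0$ lies in the finite-dimensional $F_{\lambda_0}$, so $\gr_F(M) = \sup\{\gr_F(m) \mid m \in F_{\lambda_0} \setminus \{0\}\}$ is a maximum. Part (4) then follows formally: by (1), $(M/N, \b{F})$ satisfies the hypotheses of (2), producing a nonzero class $m + N$ realizing $\gr_{\b{F}}(M/N)$, and (3) lifts this to an $m' \in m + N \subset M \setminus N$ with $\gr_F(m') = \gr_{\b{F}}(M/N)$. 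The main obstacle is the grading axiom for $\b{F}$: a naive maximizing-sequence argument tends to land inside a chain $\bigcap_n F_{\gr_F(m_n)}$ that is strictly larger than $F_{\lambda^*}$, so the true supremum is not directly visible. The coset-restriction trick above sidesteps this by converting the question to a supremum over a single finite-dimensional filtered space, where the principle applies cleanly.
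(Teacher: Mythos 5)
Your proof is correct, and it takes a genuinely different route from the paper's. The paper argues each part by contradiction: assuming the relevant supremum is not attained (or that the grading axiom fails for $\b{F}$), it iteratively produces representatives with strictly increasing gradings, yielding an infinite strictly descending chain of subspaces inside some finite-dimensional $F_{\lambda_0}$, which violates discreteness. Your approach instead isolates a single positive finiteness statement up front --- that on any finite-dimensional subspace $V$, the function $\lambda \mapsto \dim(F_\lambda \cap V)$ is non-increasing and integer-valued, so $\{\gr_F(v) : v \in V \setminus \{0\}\}$ is finite --- and then derives every sup-is-max claim directly from it. The coset-restriction step (replacing the supremum over all representatives of $m+N$ by a supremum over the affine set $m_0 + (N \cap F_{\lambda_0}) \subset F_{\lambda_0}$) is the key move that makes your principle apply to the quotient case, and it cleanly yields both the grading axiom for $\b{F}$ and part (3) in one stroke; the paper instead proves (1) by contradiction and then deduces (3) from it. Both arguments ultimately exploit the same Artinian phenomenon, but yours is constructive and modular, while the paper's is a repeated ad hoc descent. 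One small detail worth making explicit if you write this up: when you say ``every $\gr_F(v)$ equals a jump value,'' you are using the grading axiom (property~\ref{item:F1}) to ensure $v \in F_{\gr_F(v)} \cap V$ while $v \notin F_{\lambda'} \cap V$ for $\lambda' > \gr_F(v)$, which forces a strict dimension drop to the right of $\gr_F(v)$; and since the dimension can drop at most $\dim V$ times, there are only finitely many such values.
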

\begin{proof}
For item $(1)$, we show that $M/N$ admits a grading; the other statements are clear. Let $m+N \in M/N$ be nonzero, and set $\la = \gr_{\b{F}}(m+N)$. It suffices to show that $m+N \in \b{F}_\la$. Suppose for the sake of contradiction that $m+N  \not\in \b{F}_\la$. Fix a representative $m_0$ of $m+N$, and set $\la_0 = \gr_F(m_0)$. Since $F$ admits a grading, we know $m_0 \in F_{\la_0}$, so $\la > \la_0$. Then there exists a representative $m_1$ of $m+N$ with $\la > \la_1 := \gr_F(m_1) > \la_0$. Then the inclusion $F_{\la_1} \subset F_{\la_0}$ is proper. Continuing, we obtain an infinite descending sequence of proper subspaces of $F_{\la_0}$, contradicting \ref{item:F2}.

Item $(2)$ is similar. Suppose $\gr_F(M) > \gr_F(m)$ for all $m\in M\setminus\{0\}$. Pick $m_0\in M\setminus \{0\}$, and set $\la_0 = \gr_F(m_0)$. Then there exists $m_1\in M\setminus\{0\}$ such that $\la_1:= \gr_F(m_1) > \la_0$. Property \ref{item:F1} implies that $m_0 \in F_{\la_0} \setminus F_{\la_1}$. Continuing, we obtain an infinite descending sequence of proper subspaces of $F_{\la_0}$, contradicting \ref{item:F2}.  

For item $(3)$, let $\la = \gr_{\b{F}}(m+N)$. Since $M/N$ admits a grading, we know that $m+N \in \b{F}_\la$, so $m+N$ has a representative $m'$ with $m'\in F_\la$, which implies $\gr_F(m') = \la$.

Item $(4)$ follows from the first three. 
\end{proof}

\subsection{Background on Khovanov homology}

This section gives an overview of the construction of Khovanov homology from a formal complex over the Bar-Natan cobordism category, introduced in \cite{BN2}. Given a planar link diagram $D$ of a link $L$, Bar-Natan defined a chain complex $[[D]]$ over an appropriate category, and showed the chain homotopy type of $[[D]]$ is an invariant of $L$. Applying a suitable $(1+1)$-dimensional TQFT to $[[D]]$ yields homological invariants of oriented links.

\subsubsection{The Bar-Natan category and formal chain complex}\label{sec:The formal complex}

We start with a brief overview of the Bar-Natan category $\BN$ and the formal chain complex associated to oriented link diagrams, as introduced in \cite{BN2}.
Let $I :=[0,1]$ denote the unit interval, and fix a field $\bF$.
Objects of $\BN$ are defined to be formal direct sums of formally graded collections of disjoint simple closed curves in the plane $\R^2$.  
Morphisms are matrices whose entries are formal $\bF$-linear combinations of cobordisms properly embedded in $\R^2 \times I$, modulo isotopy relative to the boundary, and subject to the local relations shown in Figure \ref{fig:BN relations}. 
The degree of a cobordism $S$ is defined to be
\begin{equation}\label{eq:deg of cob}
\deg(S) = \chi(S).
\end{equation}
Note that the Bar-Natan relations are homogeneous.
 
\begin{remark}
\label{rem:why-field}
The Bar-Natan category and the formal complex can be defined over any commutative ring. We choose to work over a field in order to streamline the algebra needed for a well-behaved filtrations. For example, the proof of Lemma \ref{lem:filtrations}  relies on the fact that finitely generated $\bF$-modules (i.e.\ finite-dimensional vector spaces) are Artinian.
\end{remark}

\subsubsection{To Khovanov homology}
We will now pass from Bar-Natan's construction to Khovanov homology. The formal grading of objects and degrees of morphisms in $\BN$ will correspond to the \emph{quantum grading} $\gr_q$ in Khovanov homology.

\begin{remark}
Throughout this article, we use the term ``grading" to describe elements of a graded ring or graded module. We reserve the term ``degree" for the change in grading associated to maps.
\end{remark}

\begin{figure}
\centering
\begin{subfigure}[b]{.3\textwidth}
\begin{center}
\includestandalone{images/sphere}
\end{center}
\caption{Sphere}\label{fig:S}
\end{subfigure}
\begin{subfigure}[b]{.4\textwidth}
\begin{center}
\includestandalone{images/torus}
\end{center}
\caption{Torus}\label{fig:T}
\end{subfigure}\\
\vskip1em
\begin{subfigure}[b]{1 \textwidth}
\begin{center}
\includestandalone{images/four_tubes}
\end{center}
\caption{Four tubes}\label{fig:4Tu}
\end{subfigure}
\caption{Relations in $\BN$.}\label{fig:BN relations}
\end{figure}

Let $D$ be a diagram for an oriented link $(L,o)$, where $o$ is an orientation on the link $L \subset \R^3$. We describe the formal complex $[[D]]$ from \cite{BN2}. First, form the so-called \emph{cube of resolutions} as follows. Label the crossings of $D$ by $1,\ldots, n$. There are two ways to smooth a crossing, called the \emph{0-smoothing} and \emph{1-smoothing}, shown in \eqref{eq:two smoothings}. For each sequence $u = (u_1,\ldots, u_n)\in \{0,1\}^n$, perform the $u_i$-smoothing at the $i$-th crossing. This yields a collection of disjoint simple closed curves in the plane, which we denote by $D_u$. Elements of $\{0,1\}^n$ are vertices of an $n$-dimensional cube, and we decorate the vertex $u$ by the planar diagram $D_u$. 

\begin{equation}\label{eq:two smoothings}
\begin{aligned}
\includestandalone{images/two_smoothings}
\end{aligned}
\end{equation}

Let $u=(u_1,\ldots, u_n)$ and $v=(v_1,\ldots, v_n)$ be vertices which differ only in the $i$-th entry, in which $u_i = 0$ and $v_i=1$. In other words, there is an edge from $u$ to $v$ in the $n$-dimensional cube. The two smoothings $D_u$ and $D_v$ are identical outside of a small disk around the $i$-th crossing. There is a cobordism from $D_u$ to $D_v$ consisting of the obvious saddle ($1$-handle attachment) near the $i$-th crossing, and the identity (product cobordism) outside of the small disk. Denote this cobordism by $d_{u,v}$, and decorate each edge of the $n$-dimensional cube by the corresponding saddle cobordism. We now have a commutative cube in the category $\BN$. There is a way to associate a sign $s_{u,v} \in \{\pm 1\}$ to each edge in the cube so that multiplying the edge map $d_{u,v}$ by $s_{u,v}$ results in an anti-commutative cube (see \cite[Section 2.7]{BN2} for details). 

For a vertex $u=(u_1,\ldots, u_n) \in \{0,1\}^n$, set $\lr{u} = \sum_i u_i$. The chain complex $[[D]]$ is defined by setting 
\begin{equation}
\label{eq:cpx-q-shift}
[[D]]^i = \bigoplus_{\vert u \vert = i+ n_-} D_{u}\{i + n_+ -n_-\} 
\end{equation}
where $n_-$, $n_+$ are the number of negative and positive crossings in $D$ respectively, and the brackets $\{ \cdot \}$ denote the formal grading shift in $\BN$. 
The differential $d$ is given on each direct summand by the edge map $s_{u,v}d_{u,v}$. Anti-commutativity of the cube implies that $d^2=0$. 
\begin{theorem}\emph{(\cite[Theorem 1]{BN2})}\label{thm:[[D]] is invariant}
If diagrams $D$ and $D'$ are related by a Reidemeister move, then $[[D]]$ and $[[D']]$ are chain homotopy equivalent.
\end{theorem}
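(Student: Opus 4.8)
The plan is to follow Bar-Natan's original argument \cite{BN2}, which reduces this global statement to three purely local computations. The first step is to record the \emph{locality} of the construction: if a diagram $D$ is obtained by gluing tangle diagrams inside disjoint disks of $\R^2$, then $[[D]]$ is obtained from the complexes of the pieces by the corresponding planar (tensor/gluing) operation on complexes over $\BN$, and this operation sends chain homotopy equivalences to chain homotopy equivalences, since it amounts to tensoring with a fixed complex of cobordisms and with identity maps. As a Reidemeister move changes $D$ only inside a small disk, it therefore suffices to exhibit, for each of the three Reidemeister moves, a chain homotopy equivalence between the complexes of the two tangle diagrams appearing in that disk. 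The grading shifts in \eqref{eq:cpx-q-shift} (in terms of $n_\pm$) are arranged precisely so that these local equivalences are grading-preserving; nothing in the argument uses that $\bF$ is a field.

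The second step is to assemble the two algebraic tools that drive all three computations. The first is the \emph{Gaussian elimination} (cancellation) lemma: in a complex of objects and morphisms in an additive category, if some component $\phi\colon A \to B$ of the differential is an isomorphism, then the complex is chain homotopy equivalent to the one obtained by deleting $A$ and $B$ and modifying the remaining differential by the usual zig-zag correction through $\phi^{-1}$. The second is the \emph{delooping} isomorphism in $\BN$: a simple closed curve disjoint from the rest of a diagram is isomorphic to $\emptyset\{+1\} \oplus \emptyset\{-1\}$, with the isomorphism and its inverse given by explicit dotted cup and cap cobordisms; checking that these compose to the identity in each direction uses exactly the sphere, torus, and four-tube relations of Figure \ref{fig:BN relations}.

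With these in hand, the third step is the move-by-move verification. For R1, the complex of the kinked strand is a two-term complex whose differential is a saddle and in which one of the two resolutions contains an extra disjoint circle; delooping that circle splits the corresponding object as a sum of two shifted copies of the other resolution, the saddle restricts to an isomorphism on one summand, and Gaussian-eliminating that pair leaves exactly the one-term complex of the straight strand. For R2, the complex of the two-crossing tangle is a square; one of its four resolutions carries a component of the differential which is a product (identity) cobordism onto another resolution, and eliminating that pair, together with one application of delooping, collapses the complex to the one-term complex of the two parallel strands. For R3, I would first use the R2-simplification to slide the over-strand past the crossing, reducing R3 to a composition of moves already handled, and then directly compare the two resulting reduced complexes; alternatively, one Gaussian-eliminates on both sides and exhibits a common reduced complex.

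I expect R3 to be the main obstacle: the relevant cube has $2^3$ vertices, the simplification requires several successive cancellations, and keeping track of the edge signs $s_{u,v}$ together with the cobordisms produced by the zig-zag corrections is where the genuine work lies. Everything else — the locality reduction, R1, and R2 — is routine once Gaussian elimination and delooping are in place. Finally, chaining the local chain homotopy equivalences through the planar operations on complexes yields $[[D]] \simeq [[D']]$, completing the proof.
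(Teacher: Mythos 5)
This theorem is cited, not proved, in the paper; the reference is \cite[Theorem~1]{BN2}, so your attempt has to be measured against Bar-Natan's own argument. Your route — locality, then delooping plus Gaussian elimination for the three local moves — is a correct strategy, and it is in fact the now-standard streamlined proof, but it is \emph{not} the proof in \cite{BN2}: that machinery was only introduced by Bar-Natan in his later work on fast Khovanov homology computations. In \cite{BN2} the locality reduction is the same, but R1 and R2 are handled by writing down the chain maps and chain homotopies in closed form and verifying the identities directly against the sphere, torus, and $4Tu$ relations, with no delooping step; and R3 is not dispatched by iterated cancellation or by ``sliding past via R2'' (which, as phrased, is not a reduction — R3 is independent of R1 and R2), but by introducing a cone calculus (the categorified Kauffman trick), exhibiting both R3 complexes as mapping cones over a common object with compatible cone maps. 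Your version buys uniformity: a single mechanism handles all three moves. Bar-Natan's version buys the fact that it never leaves the undotted category $\BN$, and this matters: the delooping isomorphism as you state it is built from \emph{dotted} cups and caps, but the category $\BN$ defined in this paper (following \cite{BN2}, see Figure~\ref{fig:BN relations}) has no dots — its morphisms are bare cobordisms modulo $S$, $T$, $4Tu$. To run your argument you must either pass to a dotted quotient (and check the TQFT $\F$ still factors through it, which it does here since dots act as multiplication by $X$) or re-derive delooping purely from the $4Tu$/neck-cutting relation, which takes some care when $2$ is not invertible in $\bF$. Once that is fixed, your proof is sound and reaches the same conclusion; and your assessment that R3 is the bottleneck is precisely why \cite{BN2} introduces the cone formalism rather than grinding through cancellations.
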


The formal complex $[[D]]$ is, in a sense, a universal invariant for $\essl(2)$ homological invariants. Applying a functor from $\BN$ into an abelian category yields link homology, and Theorem \ref{thm:[[D]] is invariant} ensures that the homotopy class of the resulting chain complex is a link invariant. Constructing such a functor amounts to finding a $(1+1)$-dimensional TQFT which factors through the Bar-Natan relations in Figure \ref{fig:BN relations}; this is the content of the next section.

\subsection{$U(1) \times U(1)$-equivariant Khovanov homology} 
\label{sec:link homology}
We now describe the Frobenius pair $(R,A)$ and its associated TQFT. The ground ring is given by 
\[
R= \bF[U,V],
\]
where $\bF$ is our fixed field in grading zero, with gradings of $U$ and $V$ given by $\gr(U) = \gr(V) = -2$. The Frobenius algebra is 
\[
A= R[X]/((X-U)(X-V))
\]
with the trace $\varepsilon \: A \to R$ given by 
\[
\varepsilon(1) = 0, \ \varepsilon(X)=1.
\]
Comultiplication is then
\begin{align*}
\Delta \:  A & \to A \otimes A\\
 1 & \mapsto X \o 1 + 1 \o X - (U+V) 1\o 1\\
 X & \mapsto X\otimes X - U V 1\otimes 1 .
\end{align*}
As noted in \cite{KRfrobext}, $R$ and $A$ are the $U(1) \times U(1)$-equivariant cohomology with $\bF$ coefficients of a point and $2$-sphere $S^2$, respectively.

\begin{remark}
\label{rem:U(2) pair}
The pair $(R,A)$ is an extension of the so-called $U(2)$-equivariant Frobenius pair $(R^{\rm sym}, A^{\rm sym})$, with $R^{\rm sym} = \bF[E_1, E_2]$, $A^{\rm sym} = R^{\rm sym}[X]/(X^2-E_1X+E_2)$. The inclusion 
\[
(R^{\rm sym}, A^{\rm sym}) \hookrightarrow (R,A)
\]
is given by identifying $E_1, E_2$ with the elementary symmetric polynomials in $U$ and $V$: 
\[
E_1 \mapsto U+V, \qquad  E_2 \mapsto UV.
\]
The pair $(R^{\rm sym}, A^{\rm sym})$ is denoted $\F_5$ in \cite{Khfrobext}, and the link homology obtained from it is sometimes called \emph{universal} or \emph{equivariant} Khovanov homology.
\end{remark}

The TQFT $\F$ factors through the Bar-Natan relations. For a planar link diagram $D$, denote by
\[
\CKh(D)
\]
the chain complex of graded $R$-modules obtained by applying $\F$ to the Bar-Natan complex $[[D]]$. Theorem \ref{thm:[[D]] is invariant} implies that the chain homotopy class of $\CKh(D)$ is invariant under Reidemeister moves, and we let $\Kh(D)$ denote its homology. 

\begin{remark}
Setting $U=V=0$ recovers the Frobenius algebra $A_0 = \bF[X]/(X^2)$ and Khovanov's complex from \cite[Section 7]{Kh1}, which we will denote $\CKh_0(D)$. Setting $(U,V) = (\pm 1, \mp 1)$ collapses the grading into a filtration and yields Lee's complex \cite{Lee}. Setting one of $U,V$ to zero yields a graded version of Bar-Natan's complex (denoted $\F_6$ in \cite{Khfrobext}). 
\end{remark}

\subsubsection{Gradings}
\label{sec:kh-gradings}

Note that $A$ is a free $R$-module with basis $\{1,X\}$. Define a grading on $A$ by 
\begin{equation}
\label{eq:gradings}
\gr(1) = 1 \hskip2em \gr(X) = -1.
\end{equation} 
Multiplication by $U$ or $V$ lowers the above grading by $2$. Let $\F$ denote the corresponding $(1+1)$-dimensional TQFT. A cobordism $\Sigma \subset \R^2 \times I$ from $Z_0$ to $Z_1$ induces a map $\F(\Sigma) \: A^{\o n_0} \to A^{\o n_1}$ of degree $\chi(S)$, where $n_i$ is the number of circles in $Z_i$.

\begin{remark}
\label{rem:gradings}
There are two grading conventions appearing in the literature. The gradings in \eqref{eq:gradings} agree with \cite{Kh1, BN1, BN2}. Other sources \cite{Khfrobext, KRfrobext} use the opposite grading where the ground ring is non-negatively graded, while in the present paper it is non-positively graded.
Moreover, when viewing $A$ as an $R$-algebra, it is more natural to set $\gr(1) = 0$, $\gr(X) = -2$, so that multiplication preserves the grading. However, in Khovanov homology, one views $A$ as an $R$-module, and gradings are balanced around zero as above.
\end{remark}

Suppose a link diagram $D$ has $n_+$ positive crossings and $n_-$ negative crossings, totaling $n = n_+ + n_-$ crossings.
The chain complex $\CKh(D)$
has a distinguished $R$-basis given at each smoothing $\F(D_u)$ by elements 
\[
g=g_1 \o \dots \o g_k
\]
where $k$ is the number of circles in $D_u$ and each $g_i \in \{1, X\}$. We call such a basis element a \emph{Khovanov generator},
and denote the set of Khovanov generators by $\Kg_0(D)$ (cf.\ Section \ref{sec:filtered-distinguished-generators}).
The quantum grading of $g\in \Kg_0(D)$ is
\begin{equation}
\label{eq:grq chain complex}
\grq(g) = \gr(g) + \lr{u} + n_+ - 2n_-
\end{equation}
where $\gr(g)$ is induced by the formula in equation \eqref{eq:gradings}.  We will also view $\CKh(D)$ as an $\bF$-vector space, in which case a distinguished basis is given by elements of the form $U^{m} V^{n} g$, where $g\in \Kg_0(D)$ is a Khovanov generator as above, and $m, n \geq 0$. Denote this $\bF$-basis by $\Kg(D)$. We have 
\begin{equation*}
\grq(U^{m} V^{n} g) = \grq(g) - 2(m + n). 
\end{equation*}

Note that the quantum grading $\gr_q$ is distinct from the homological grading of the complex, which we denote by $\gr_h$.

\subsubsection{Maps associated to cobordisms}

We recall from \cite{Kh1} maps assigned to cobordisms. An oriented link cobordism $S \subset \R^3 \times I$ from an oriented link $L$ to an oriented link $L'$ can be represented as a sequence of movie frames $D_0, D_1,  \ldots, D_k$, where each $D_i$ is an oriented link diagram, $D_0$ and $D_k$ are diagrams for $L$ and $L'$ respectively, and $D_{i+1}$ is obtained from $D_i$ by either a Reidemeister move or a Morse move (cup, cap, or saddle). A chain map $\CKh(D_i) \to \CKh(D_{i+1})$ is naturally associated to each such elementary cobordism $D_i \to D_{i+1}$: to Reidemeister moves assign the corresponding chain homotopy equivalence, and to Morse moves assign the map induced by the TQFT $\F$ on each vertex in the cube of resolutions. Composing these elementary cobordism maps  yields a chain map $\CKh(D_0) \to \CKh(D_1)$ of degree $\chi(S)$, such that the induced map on homology is independent of the decomposition of $S$ into elementary pieces up to an overall sign \cite{Jacobsson, BN2, Kh-functoriality}.

\subsection{Inverting the discriminant}\label{sec:Inverting the discriminant}

We review a further extension $(R_\D, A_D)$ of the Frobenius pair $(R, A)$, introduced in \cite{KRfrobext}. Let 
\begin{equation}\label{eq:D}
\D = (V-U)^2
\end{equation}
denote the discriminant of the quadratic polynomial $(X-U)(X-V) \in R[X]$. Let 
\[
R_\D = R [ \D^{-1}]
\]
denote the ring obtained by inverting $\D$ (equivalently, one may invert $V - U$), and let
\[
A_\D = A \o_{R} R_\D
\]
be the extension of $A$ to an $R_\D$-algebra. Let $\F_\D = \F\o_R R_\D$ denote the corresponding TQFT. For a link $L\subset \R^3$ with diagram $D$, let 
\[
\CKhD(D) := \F_\D([[D]])
\]
be the resulting chain complex. It is an invariant of $L$ up to chain homotopy equivalence, and we will denote its homology by $\KhD(L)$.

The elements 
\begin{equation}\label{eq:e_1 and e_2}
e_1  = \frac{X-U}{V-U},\hskip1em e_2 = -\frac{X - V}{V-U} \in A_\D. 
\end{equation}
form a basis for $A_\D$ and satisfy 
\[
e_1 + e_2 = 1,\ e_1^2 = e_1,\ e_2^2 = e_2,\ e_1e_2=0,
\]
so that the algebra $A_\D$ decomposes as a product, $A_\D = R_\D e_1 \times R_\D e_2$. With respect to the basis $\{e_1, e_2\}$, comultiplication in $A_\D$ is simply given by
\begin{equation}\label{eq:comultiplication D}
\begin{split}
\Delta(e_1) &= (V- U) e_1 \o e_1 \\
\Delta(e_2) &= - (V-U) e_2\o e_2.
\end{split}
\end{equation}

\subsubsection{Involutions}

As explained in \cite[Section 1.1]{KRfrobext}, there is an $R$-algebra involution $ \iota: A \to A$ given by
\begin{equation}
\label{eq:iota involution}
   \iota(1) = 1, \quad \iota(X) = U+ V - X.
\end{equation}
Note that $\iota$ interchanges $X-U$ and $V-X$. We obtain an involution $\iota^{\otimes k}$ on $A^{\otimes k}$ by applying $\iota$ on each tensor factor. 

Although $\iota$ is an algebra involution, it does not preserve the Frobenius algebra structure, since $\Delta(\iota(a)) = - \Delta(a)$ and  $\varepsilon(\iota(a)) = -\varepsilon(a)$ for all $a\in A$. That is, the comultiplication and counit maps are twisted (in the sense of \cite{Khfrobext}) by $-1$. Consequently, applying $\iota^{\otimes k(u)}$ at each vertex $u$ of the cube of resolutions (where $k(u)$ is the number of planar circles in the resolution $D_u$) does not yield a chain map. 
However, by \cite[Proposition 3]{Khfrobext} (see also the discussion of Sch\"utz in \cite[Section 2]{Schuetz-integral}) one can make sign adjustments at the vertices of the cube of resolutions. That is, applying $\sigma(u)\cdot \iota^{\otimes k(u)}$ for some sign $\sigma(u) \in \{\pm 1\}$ at each vertex $u$ yields an honest involution on the chain complex $\CKh(D)$. We denote this involution by $I$.  

\begin{remark}
One can, for example, choose $\sigma(u)$ as follows. Pick any path $\gamma$ in the cube of resolutions from the all-$0$ resolution to $u$, and let $\# \Delta(u)$ denote the number of comultiplications (split maps) along the path $\gamma$. Note that $\# \Delta(u)$ does not depend on the choice of the path $\gamma$. Define $\sigma(u)$ to be $(-1)^{\# \Delta(u)}$.  
\end{remark}

Furthermore, $\iota$ induces an involution on $A_\D$, which swaps $e_1$ and $e_2$, and by introducing signs as above we obtain an involution of the localized complex $\CKhD(D)$ which agrees with the chain map induced by the chain map $I : \CKh(D) \to \CKh(D)$. By an abuse of notation, we will use $\iota$ to denote the involution on $A$ or $A_\D$, and $I$ to denote the involution on both chain complexes $\CKh(D)$ and $\CKhD(D)$.

\subsubsection{Properties of the localized theory}

As noted in \cite[Section 1.2]{KRfrobext}, the TQFT $\F_\D$ is essentially the Lee deformation \cite{Lee}. By \cite[Theorem 4.2]{Lee}, the Lee homology of an $l$-component link is free (over $\Q$) of rank $2^l$. An alternate proof can be found in the final remark in \cite{Wehrli}; see also \cite{BN-Morrison}. The following proposition is readily proven using the same arguments. 

\begin{proposition}\label{prop:inverting D rank}
For a link $L\subset \R^3$ with $l$ components, the homology $\KhD(L)$ is a free $R_\D$-module of rank $2^l$.
\end{proposition}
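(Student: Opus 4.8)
The plan is to adapt the standard argument for the rank of Lee homology to the localized Frobenius pair $(R_\D, A_\D)$. The key structural fact, already recorded in the excerpt, is that $A_\D$ decomposes as a product $A_\D = R_\D e_1 \times R_\D e_2$ of rank-one algebras via the idempotents $e_1, e_2$, and that the comultiplication is diagonal in the $\{e_1, e_2\}$ basis (equation \eqref{eq:comultiplication D}). First I would fix a diagram $D$ of $L$ with $l$ components and pass to a resolution cube; because the saddle maps (multiplication and comultiplication) in the basis $\{e_1, e_2\}$ send basis vectors to scalar multiples of basis vectors, the chain complex $\CKhD(D)$ splits, as a complex of $R_\D$-modules, into a direct sum indexed by the ways of labelling the circles of each resolution by $e_1$ or $e_2$. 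This is exactly the ``Lee generator'' bookkeeping: most of these labelled summands are acyclic, and the surviving homology is spanned by the canonical generators associated to the $2^l$ orientations of $L$.

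Concretely, the steps are: (1) rewrite the TQFT maps on $A_\D^{\otimes k}$ in the $\{e_1, e_2\}$ basis, observing from $e_i e_j = \delta_{ij} e_i$ and \eqref{eq:comultiplication D} that each edge map is, up to an invertible scalar in $R_\D$ (a power of $V-U$) and the edge sign, either an isomorphism onto a coordinate subspace or zero; (2) conclude that $\CKhD(D)$ is a direct sum of complexes, one for each assignment of idempotents to the circles in the various resolutions, and identify which of these summands are contractible by the usual cancellation argument (an edge whose saddle map is an isomorphism cancels a pair of cube vertices); (3) show that after all cancellations the homology is free over $R_\D$ with one generator for each orientation $o$ of $L$, exactly as in Lee's original computation \cite{Lee} and the streamlined accounts \cite{Wehrli, BN-Morrison}; (4) count: $L$ has $2^l$ orientations, so the rank is $2^l$. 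Freeness is automatic because the surviving summands have trivial differential (or deformation-retract onto a complex with trivial differential) over the ground ring $R_\D$.

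The main obstacle I anticipate is purely bookkeeping rather than conceptual: one must check that the edge signs $s_{u,v}$ and the scalars $\pm(V-U)^{\pm 1}$ appearing in \eqref{eq:comultiplication D} do not obstruct the cancellation, i.e.\ that the acyclic summands really are acyclic over $R_\D$ (this uses that $V-U$ is invertible, which is the whole point of inverting $\D$) and that the canonical orientation generators are genuinely linearly independent in homology over $R_\D$ and generate it. This is where working over the localized ring, rather than a field, requires a small amount of care; but since $R_\D$ is a commutative ring and the relevant subcomplexes have differentials that are block-diagonal with invertible blocks on the contractible part, Gaussian elimination over $R_\D$ goes through verbatim. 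In the writeup I would simply say that the argument of \cite{Lee, Wehrli, BN-Morrison} applies mutatis mutandis, the only change being that $\Q$ is replaced by $R_\D$ and invertibility of $2$ is replaced by invertibility of $V-U$, and that the output is a free $R_\D$-module of rank $2^l$ with basis the canonical generators $\{\s_o\}$ indexed by orientations $o$ of $L$.
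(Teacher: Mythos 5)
Your proposal is correct and matches the paper's approach: the paper's ``proof'' is precisely the observation that $\F_\D$ is the Lee deformation and that the arguments of Lee, Wehrli, and Bar-Natan--Morrison carry over, which is exactly what you spell out. The only small imprecision is in your step (2): the complex $\CKhD(D)$ does not split with one summand for each labelling of circles at each resolution (that would give a complex with no differential at all); rather it splits along connected components of the ``enhanced state'' graph, and the cancellation then happens within each component -- but this is a wording issue, not a gap, since the rest of your argument makes clear you understand the mechanism.
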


Let \[
T(D) = \{\eta \in \Kh(D) \mid r\cdot\eta = 0 \text{ for some nonzero } r\in R\}.
\]
denote the $R$-torsion submodule of $\Kh(D)$. 
Proposition \ref{prop:inverting D rank} implies the following; see also \cite[Corollary 2.10]{Sano-divisibility}. 

\begin{corollary}
\label{cor:only c torsion}
The (non-localized) homology $\Kh(D)$ contains only $(V-U)$-torsion. That is, every element of $T(D)$ is annihilated by a power of $V-U$. 
\end{corollary}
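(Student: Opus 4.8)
The plan is to deduce the corollary from Proposition \ref{prop:inverting D rank} by a standard localization argument. The key point is that inverting $\D = (V-U)^2$ is the same as inverting $V-U$, and that the localization map fits into an exact sequence whose kernel is precisely the $(V-U)$-local torsion. First I would recall the general algebraic fact: for an $R$-module $M$ and an element $r \in R$, the kernel of the localization map $M \to M[r^{-1}]$ consists exactly of those $\eta \in M$ with $r^k \eta = 0$ for some $k \geq 0$. Applying this with $M = \Kh(D)$ and $r = V - U$, it suffices to show that every torsion element of $\Kh(D)$ lies in the kernel of the localization map $\Kh(D) \to \KhD(D)$, i.e.\ that $T(D)$ equals the $(V-U)$-power torsion.

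The containment ``$\supseteq$'' is immediate, since any element killed by a power of $V-U$ is killed by a nonzero element of $R$. For the reverse containment, suppose $\eta \in T(D)$, so $p \cdot \eta = 0$ for some nonzero $p \in R = \bF[U,V]$. I want to conclude $\eta$ is killed by a power of $V-U$; equivalently, by the algebraic fact above, that $\eta$ maps to $0$ in $\KhD(D) = \Kh(D) \otimes_R R_\D$. Here I would use that localization is exact, so $\KhD(D)$ is computed as the homology of the localized complex $\CKhD(D)$, and the natural map $\Kh(D) \to \KhD(D)$ is $\eta \mapsto \eta \otimes 1$. Since $p\eta = 0$ in $\Kh(D)$, we get $p(\eta \otimes 1) = 0$ in $\KhD(D)$. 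But $\KhD(D)$ is a free (hence torsion-free) $R_\D$-module by Proposition \ref{prop:inverting D rank}, so the image of $\eta$ is annihilated by $p$ only if either $p$ becomes a zero divisor in $R_\D$ or $\eta \otimes 1 = 0$. Since $R_\D$ is a domain (a localization of the domain $\bF[U,V]$), $p$ is a zero divisor in $R_\D$ only if $p$ maps to $0$, i.e.\ only if $p$ is divisible by a power of $V-U$ already; and if $p$ maps to a unit or a nonzero nonunit, we are forced to $\eta \otimes 1 = 0$, hence $(V-U)^k \eta = 0$ for some $k$. In the remaining case $p = (V-U)^k p'$ with $p'$ not divisible by $V-U$, write $p'\eta =: \eta'$; then $(V-U)^k \eta' = 0$, and $\eta'$ is again torsion killed by $p'$, so replacing $p$ by $p'$ we may assume $p$ is coprime to $V-U$. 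Then $p$ is a nonzero nonunit (or unit) in $R_\D$, so $\eta \otimes 1 = 0$ as before.

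The main obstacle — really the only point requiring care — is making sure that "localize at $\D$" genuinely agrees with "localize at $V-U$" (clear, since $\D = (V-U)^2$ so $\D^{-1}$ and $(V-U)^{-1}$ generate the same localization) and that the natural map $\Kh(D) \to \KhD(D)$ is the localization map on homology; this follows from exactness of localization over the commutative ring $R$, which commutes with taking homology of the complex of free $R$-modules $\CKh(D)$, together with the definition $\CKhD(D) = \F_\D([[D]]) = \CKh(D) \otimes_R R_\D$. With that identification in hand, the freeness statement of Proposition \ref{prop:inverting D rank} forces any $R$-torsion to die after localization, which is exactly the assertion that the only torsion in $\Kh(D)$ is $(V-U)$-power torsion.
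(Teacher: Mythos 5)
Your proof is correct and takes essentially the same route as the paper: localize, use Proposition \ref{prop:inverting D rank} to see $\KhD(D)$ is a free module over the domain $R_\D$ and hence torsion-free, and conclude that $R$-torsion must die after inverting $V-U$, i.e.\ that it is $(V-U)$-power torsion. The paper phrases this at the level of the whole module $T(D)$ --- exactness of localization gives an inclusion $T(D)\otimes_R R_\D \hookrightarrow \KhD(D)$, forcing $T(D)\otimes_R R_\D = 0$ --- whereas you argue element by element, but the content is identical. One small wobble: you assert that a nonzero $p$ can map to zero in $R_\D$ when $p$ is divisible by a power of $V-U$; this is false, since $R=\bF[U,V]$ is a domain and so $R\to R_\D$ is injective, meaning every nonzero $p$ remains a non-zero-divisor in the domain $R_\D$. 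The case split is therefore unnecessary: from $p(\eta\otimes 1)=0$ in the torsion-free $R_\D$-module $\KhD(D)$ with $p\neq 0$ you can immediately conclude $\eta\otimes 1=0$.
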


\begin{proof}
Exactness of localization at $V-U$ yields an inclusion $T(D) \o_{R} R_{\D} \hookrightarrow \Kh(D) \o_{R} R_{\D} \cong \KhD(D)$. But $\KhD(D)$ is free (over the integral domain $R_\D$) by Proposition \ref{prop:inverting D rank}, while every element in $T(D) \o_{R} R_{\D}$ is torsion. Therefore $T(D) \o_{R} R_{\D} = 0$, which completes the proof. 
\end{proof}

\subsubsection{Canonical Lee generators for the localized theory}
Following \cite[Section 4.4]{Lee} (see also \cite[Section 2.4]{Rasmussen}, \cite[Algorithm 2.5]{Sano-fixing-functoriality}), an explicit basis for $\KhD(L)$ consisting of the \emph{canonical Lee classes} of $\KhD(D)$ can be constructed as follows. (Note that these are different from the Khovanov generators, which are distinguished generators for the non-localized complex.)

\begin{algorithm}
\label{alg:ABcolors}
Suppose we are given an orientation $o$ of $D$ and the checkerboard coloring $C$ (shaded/unshaded) of the complement of the diagram in $\R^2$ such that the unbounded region is unshaded. For each circle in the oriented resolution $D_o$, draw a small dot slightly to the left of the circle, based on its orientation. If the dot is in a shaded region, label the circle $\colora$; if the dot is in an unshaded region, label the circle $\colorb$. 
\end{algorithm}

In the direct summand of $\CKhD(D)$ corresponding to $D_o$, consider the element $\s_o$ whose tensor factor is $e_1$ for $\colora$-labeled circles and $e_2$ for $\colorb$-labeled circles. The element $\s_o$ is a cycle, called a \emph{canonical Lee generator}, whose homology class $[\s_o]$ is a canonical Lee class of the homology of the localized complex.
See Figure \ref{fig:trefoil-example} for an example. The homology classes $[\s_o]$ over all orientations of $D$ form a basis for $\KhD(D)$. Given an orientation $o$, the reversed orientation is denoted $\b{o}$, and the associated canonical Lee generator is denoted $\s_{\b{o}}$ accordingly. 
The involution $I$ swaps $\s_o$ and $\s_{\b{o}}$, up to a sign. 

\begin{figure}
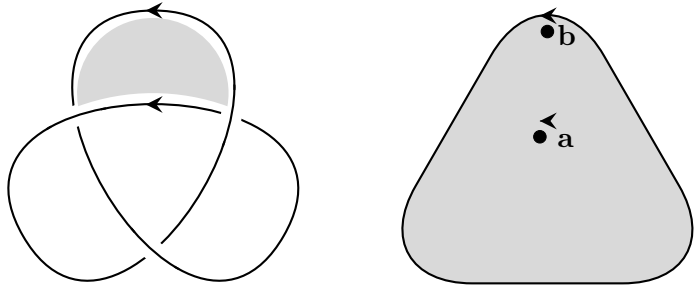

    \centering
    \includestandalone{images/trefoil-example}
    \caption{An oriented diagram $(D,o)$ of a right-handed trefoil and the associated canonical Lee generator $\s_o$. The shading indicates the choice of checkerboard coloring.}
    \label{fig:trefoil-example}
\end{figure}

\subsubsection{Maps associated to cobordisms}

The following is an analogue of \cite[Proposition 4.1]{Rasmussen}, proven by Sano  for $U(1) \times U(1)$-equivariant Khovanov homology:

\begin{proposition}\emph{(\cite[Proposition 3.17]{Sano-divisibility})}\label{prop:localized-map-injective}
Let $(L,o), (L',o')$ be oriented links and $S$ a connected cobordism from $L$ to $L'$. Fix oriented diagrams $(D,o)$ and $(D',o')$ for $L$ and $L'$ respectively. Then, up to a sign, the induced map
\[
S_* \: \KhD(D) \to \KhD(D')
\]
sends $\s_o$ and $\s_{\b{o}}$ to $(V-U)$-multiples of $\s_{o'}$ and $\s_{\b{o}'}$, respectively. In particular, $S_*$ is an isomorphism if $L$ and $L'$ are knots. 

\end{proposition}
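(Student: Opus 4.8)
The plan is to follow the strategy of Rasmussen's original argument, reducing the statement to the connected cobordism case by decomposing $S$ into elementary pieces and tracking the canonical Lee generators through each piece. First I would recall that any connected cobordism $S$ from $L$ to $L'$ can be presented as a movie, i.e.\ a finite sequence of Reidemeister moves and elementary Morse moves (births, deaths, saddles). Since the homology classes $[\s_o]$ are natural under Reidemeister moves (up to sign, coming from the sign ambiguity in functoriality), it suffices to understand the behavior of $\s_o$ under each of the three Morse moves, and then compose. The key point is that because $S$ is connected, one may arrange the movie so that every birth circle is immediately (or eventually) merged into the rest of the diagram, so we never accumulate an uncontrolled product of components.

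The core computation is in the localized Frobenius algebra $A_\D = R_\D e_1 \times R_\D e_2$, using the explicit comultiplication $\Delta(e_1) = (V-U)e_1\o e_1$, $\Delta(e_2) = -(V-U)e_2\o e_2$ from \eqref{eq:comultiplication D}, together with the multiplication $e_i e_j = \delta_{ij} e_i$, the unit $1 = e_1 + e_2$, and the counit $\varepsilon(e_1) = (V-U)^{-1}$, $\varepsilon(e_2) = -(V-U)^{-1}$ (obtained from $\varepsilon(1)=0$, $\varepsilon(X)=1$). I would check, move by move:
\begin{itemize}
\item[] (births) a newly born circle carries the label forced by Algorithm \ref{alg:ABcolors}, so the birth map sends $\s_o$ to the canonical generator $\s_{o''}$ of the new diagram on the nose (no $(V-U)$ factor introduced, since births insert a $1 = e_1 + e_2$ but the Lee generator on the enlarged resolution is still the product of $e_i$'s — one checks the orientation-induced labels match);
\item[] (saddles/merges) when a saddle merges two circles with the same label $\colora$ (resp.\ $\colorb$), multiplication sends $e_1\o e_1 \mapsto e_1$ (resp.\ $e_2\o e_2\mapsto e_2$), again no scalar; when it merges two circles with opposite labels, $e_1 e_2 = 0$, but this case cannot arise for a merge that is ``compatible'' with the orientation data, and the subtle bookkeeping of colors (left-dots, shading) guarantees the surviving output is again a canonical generator, possibly for a resolution that is not the oriented one, handled by the standard argument that $\s_o$ differs from such a generator by a $(V-U)$-power;
\item[] (splits) a split applies $\Delta$, producing exactly the factor $\pm(V-U)$, which is the only source of the $(V-U)$-multiples in the statement.
\end{itemize}
Tracking signs is deferred throughout, consistent with the ``up to a sign'' in the statement and with the sign ambiguity in the functoriality of $\Kh_\D$ under Reidemeister moves.

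For the final sentence, when $L$ and $L'$ are knots, the cobordism $S$ restricted to the relevant generators behaves as follows: $\KhD(D) \cong R_\D^2$ with basis $[\s_o], [\s_{\b o}]$, and $S_*$ sends this basis to $(V-U)^{a}[\s_{o'}], (V-U)^{b}[\s_{\b o'}]$ for some $a, b \geq 0$. To conclude $S_*$ is an isomorphism I would invoke the genus / Euler characteristic count exactly as in \cite[Proposition 4.1]{Rasmussen} and \cite[Proposition 3.17]{Sano-divisibility}: the composition of $S$ with a reversed cobordism $\bar S$ from $L'$ back to $L$ realizes multiplication by a power of $(V-U)$ determined by the genus, and a parity/degree argument forces the powers to be equal; combined with the fact (Proposition \ref{prop:inverting D rank}) that $\KhD$ of a knot is free of rank $2$, the upper-triangularity forces $a = b = 0$, so $S_*$ is invertible over $R_\D$. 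The main obstacle I anticipate is the careful color-bookkeeping in the saddle case: one must verify that the labels assigned by Algorithm \ref{alg:ABcolors} to the circles of an intermediate (possibly non-oriented) resolution are exactly those produced by applying $\F_\D$ to the movie, so that the output is literally a canonical Lee generator (up to a $(V-U)$-power) rather than some other element of $A_\D^{\o k}$; this is precisely the part of Rasmussen's argument that requires the most diagrammatic care, and here it must be redone with the $e_1, e_2$ idempotents and the asymmetric signs in \eqref{eq:comultiplication D}.
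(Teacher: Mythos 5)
The paper itself does not prove this proposition; it is cited verbatim from \cite[Proposition 3.17]{Sano-divisibility}. So there is no in-paper proof to compare against --- the closest thing is the analogous argument the paper gives for the reduced theory in Proposition~\ref{prop:reduced Lee generators under connected cob}.

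Your broad strategy (present $S$ as a movie and track the Lee generators in the idempotent basis $e_1,e_2$) is the right one, and matches both Rasmussen's and Sano's arguments, but there are concrete gaps in the execution. The birth case is wrong as stated: a cup inserts a tensor factor $1 = e_1 + e_2$, so the image of $\s_o$ is a \emph{sum} of two canonical generators (one for each orientation of the newborn circle), not a single $\s_{o''}$ ``on the nose.'' Consequently, dismissing the opposite-color merge ($e_1 e_2 = 0$) as something that ``cannot arise'' misses the mechanism that makes the whole thing work --- such merges do occur, and they are precisely what eventually kill the spurious summands that births introduce. The correct bookkeeping is the permissible-orientations argument that the paper lays out in the Claims~1--3 structure of its proof of Proposition~\ref{prop:reduced Lee generators under connected cob}: each elementary move sends $\s_\alpha$ to a signed, $(V-U)$-power-weighted sum over orientations of the next slice extending $\alpha$; connectedness of $S$ guarantees a \emph{unique} orientation of $S$ extending $o$, so the final sum collapses to one term; and one checks that term is never killed. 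You also omit deaths entirely, which apply $\varepsilon$ and hence contribute $\pm(V-U)^{-1}$ by your own formula. Finally, the concluding isomorphism argument is over-engineered: since $V-U$ is a \emph{unit} in $R_\D$, once you know $S_*[\s_o]=\pm(V-U)^a[\s_{o'}]$ and $S_*[\s_{\b o}]=\pm(V-U)^b[\s_{\b o'}]$, the matrix of $S_*$ in the rank-two bases from Proposition~\ref{prop:inverting D rank} is diagonal with unit entries and therefore invertible --- no genus or parity count is needed, and there is no ``upper-triangularity'' to speak of.
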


\begin{corollary}
\label{cor:induced map is injective mod torsion}
Let $K, K'$ be oriented knots and $S$ a connected cobordism from $K$ to $K'$. Fix oriented diagrams $(D,o)$ and $(D',o')$ for $K$ and $K'$ respectively. Then the induced map
\[
\til{S}_* \: \Kh(D)/T(D) \to \Kh(D')/T(D')
\]
is injective.
\end{corollary}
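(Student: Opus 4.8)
The plan is to reduce the statement to Sano's Proposition~\ref{prop:localized-map-injective} by localizing at the discriminant $\D$. First I would record that the localization homomorphism $\phi_D \colon \Kh(D) \to \Kh(D)\o_R R_\D \cong \KhD(D)$ has kernel exactly $T(D)$: its kernel consists of those classes annihilated by some element of the multiplicative set generated by $\D$ (equivalently, by a power of $V-U$), and by Corollary~\ref{cor:only c torsion} this is precisely the torsion submodule $T(D)$. Hence $\phi_D$ descends to an injection $\b{\phi}_D \colon \Kh(D)/T(D) \hookrightarrow \KhD(D)$, and likewise $\b{\phi}_{D'} \colon \Kh(D')/T(D') \hookrightarrow \KhD(D')$.

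Next I would check the naturality square. Since the cobordism map $S_* \colon \Kh(D)\to \Kh(D')$ is $R$-linear it carries $T(D)$ into $T(D')$, so it descends to the map $\til{S}_*$ of the statement. Moreover $\F_\D = \F \o_R R_\D$, so the cobordism map on $\KhD$ is obtained from the chain-level cobordism map on $\CKh$ by applying $-\o_R R_\D$ and passing to homology (localization being exact). Consequently the square with top row $\til{S}_* \colon \Kh(D)/T(D) \to \Kh(D')/T(D')$, bottom row $S_* \colon \KhD(D) \to \KhD(D')$, and vertical maps $\b{\phi}_D$, $\b{\phi}_{D'}$ commutes, up to the overall sign ambiguity of cobordism maps, which is irrelevant for injectivity.

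Finally, $K$ and $K'$ are knots and $S$ is connected, so Proposition~\ref{prop:localized-map-injective} gives that the bottom map $S_* \colon \KhD(D) \to \KhD(D')$ is an isomorphism, in particular injective. A diagram chase then finishes the argument: if $\til{S}_*(x) = 0$ then $S_*(\b{\phi}_D(x)) = \b{\phi}_{D'}(\til{S}_*(x)) = 0$, hence $\b{\phi}_D(x) = 0$, hence $x = 0$ because $\b{\phi}_D$ is injective. Therefore $\til{S}_*$ is injective.

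I do not expect a genuine obstacle here. The only two points needing care are the identification $\ker \phi_D = T(D)$, which is exactly the content of Corollary~\ref{cor:only c torsion}, and the compatibility of cobordism maps with localization, i.e.\ that applying $-\o_R R_\D$ to the chain-level cobordism map on $\CKh(D)$ recovers the cobordism map on $\CKhD(D)$; both are routine given that localization at $\D$ is an exact functor.
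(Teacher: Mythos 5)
Your proposal is correct and follows essentially the same route as the paper: both localize at $\D$, identify the kernel of the localization map with $T(D)$ via Corollary~\ref{cor:only c torsion}, invoke Proposition~\ref{prop:localized-map-injective} for the bottom isomorphism, and finish by a commutative-square diagram chase. The only difference is that you spell out the naturality of the cobordism map under localization, which the paper leaves implicit.
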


\begin{proof}
Consider the commutative square 
\[
\begin{tikzcd}
    \Kh(D) \ar[r,"{S}_*"] \ar[d, "\lambda"'] & \Kh(D') \ar[d, "\lambda'"] \\
    \KhD(D) \ar[r, "\cong"] & \KhD(D')
\end{tikzcd}
\]
where the horizontal maps are induced by $S$, $\lambda, \lambda'$ are the natural localization maps, and the bottom horizontal map is an isomorphism by Proposition \ref{prop:localized-map-injective}. The kernels of $\lambda$ and $\lambda'$ are the submodules of $\Kh(D)$ and $\Kh(D')$ consisting of $(V-U)$-torsion elements, which by Corollary \ref{cor:only c torsion} are equal to $T(D)$ and $T(D')$, respectively. Thus we have a commutative square 
\[
\begin{tikzcd}
    \Kh(D)/T(D)\ar[r,"\til{S}_*"] \ar[d, hook] & \Kh(D')/T(D') \ar[d, hook] \\
    \KhD(D) \ar[r, "\cong"] & \KhD(D')
\end{tikzcd}
\]
with injective vertical maps, which guarantees injectivity of $\til{S}_*$. 
\end{proof}

\begin{remark}
For a knot $K$, $\KhD(K)$ is supported in homological grading zero. Hence the quotient $\Kh(K) /T(K)$ is also supported in homological grading zero. 
\end{remark}

%----------------------------------------------------
\section{Concordance invariant from $U$ and $V$ algebraic gradings} 
\label{sec:construction}

In this section, we construct an Upsilon-like Rasmussen concordance invariant from $\CKh(D)$. We begin by combining the quantum grading with two algebraic filtration gradings, and study the behavior of the differential under these ``mixed" filtration gradings.

\subsection{A mixed filtration grading}

\subsubsection{$U$- and $V$-power algebraic gradings}
The complex $\CKh(D)$ comes with two additional algebraic gradings, $\gr_U$ and $\gr_V$ given by the $U$- and $V$-powers, respectively. Precisely, for an element $U^{m} V^{n} g\in \Kg(D)$, define 
\begin{equation}
\label{eq:gru grv}
\gr_U (U^{m} V^{n} g) = m, \ \ \gr_V(U^{m} V^{n} g) = n. 
\end{equation}
The differential is non-decreasing with respect to each of these gradings. 

On the $UV$-plane, we see that the differential $d$ has four types of homogeneous components, 
\[
    d = d_{0,0} + d_{1,0} + d_{0,1} + d_{1,1},
\]
where the subscripts denote $(\gr_U, \gr_V)$-bidegree. The first component, $d_{0,0}$, is the original Khovanov differential for $\Kh_0$. 

\begin{example}
Consider a one-crossing negative diagram for the unknot, so that the differential is just comultiplication. Then 
\begin{align*}
  d(1) &= \underbrace{X\o 1 + 1\o X}_{d_{0,0}} - \underbrace{U 1\o 1}_{d_{1,0}} - \underbrace{V 1\o 1 }_{d_{0,1}}, \\
  d(X) &= \underbrace{X \o X}_{d_{0,0}} - \underbrace{UV 1\o 1}_{d_{1,1}}.
\end{align*}
\end{example}

\begin{remark}
Compare this picture to Grigsby-Licata-Wehrli's \cite{GLW} annular Khovanov-Lee differentials on the $(j, j-2k)$-bigrading plane, where $j$ and $k$ refer to the quantum and annular gradings, respectively.
\end{remark}

\subsubsection{A mixed filtration grading $\grt$}

For $t \in [0,2]$, define a filtration grading $\gr_t$ that is homogeneous on the distinguished generators $\Kg(D)$ as follows:
\begin{equation}
\label{eq:grt grading}
    \gr_t( U^{m} V^{n} y) = \gr_q(y) - ( t \cdot m + (2-t) \cdot n).
\end{equation}
Multiplication by $U$ decreases $\gr_t$ by $t$, while multiplication by $V$ decreases $\gr_t$ by $2-t$.

In order for $\gr_t$ to be a filtration grading on $\CKh(D)$, we must first understand how $\gr_t$ changes under cobordisms. This will also allow us to prove some properties of the invariant in Section \ref{sec:properties}.

\begin{lemma}
\label{lem:grt filtered degree}
Let $Z_0$ and $Z_1$ be two collections of disjoint simple closed curves in $\R^2$, and let $S\subset \R^2 \times I$ be a cobordism from $Z_0 \subset \R^2\times\{0\}$ to $Z_1 \subset \R^2 \times \{1\}$. If $z \in \F(Z_0)$ is $\gr_t$-homogeneous, then every homogeneous summand $z'$ of $\F(S)(z) \in \F(Z_1)$ satisfies 
\[
\gr_t(z') \geq \gr_t(z) + \chi(S) .
\]
\end{lemma}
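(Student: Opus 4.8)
The plan is to reduce the statement to the case where $S$ is an elementary cobordism, since any cobordism $S$ between planar diagrams can be decomposed into a sequence of elementary pieces (births, deaths, saddles), and both $\chi$ and the $\gr_t$-shift inequality are additive under composition. Concretely, if $S = S_k \circ \cdots \circ S_1$ then $\chi(S) = \sum_i \chi(S_i)$ (correcting for the circles being glued, which is the standard Euler characteristic bookkeeping), and if each $\F(S_i)$ shifts $\gr_t$ by at least $\chi(S_i)$ on homogeneous elements, then the composite shifts by at least $\sum_i \chi(S_i) = \chi(S)$. So it suffices to check the three elementary cobordisms.

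Next I would verify the inequality for each elementary piece by direct computation using the explicit TQFT $\F$ associated to $A = R[X]/((X-U)(X-V))$, keeping careful track of $\gr_t$ as defined in \eqref{eq:grt grading}. For a \emph{birth} (cup, $\chi = +1$), $\F$ inserts a new tensor factor equal to $1 \in A$; since $\gr_q(1) = 1$ in the relevant normalization and no $U$ or $V$ powers are introduced, $\gr_t$ increases by exactly $1 = \chi$. For a \emph{death} (cap, $\chi = +1$), $\F$ applies the counit $\varepsilon$ with $\varepsilon(1) = 0$, $\varepsilon(X) = 1$: on the factor $1$ the output is $0$ (vacuously fine), and on the factor $X$ the output is $1 \in R$, which changes $\gr_q$ by $-(-1) + 0$... here I would just note $\gr_t(X) = \gr_q(X) = -1$ and $\gr_t(1) = 1$, so the counit raises $\gr_t$ by $2 > 1 = \chi$; the inequality holds. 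For a \emph{saddle} (merge or split, $\chi = -1$): a merge is multiplication $m \colon A \otimes A \to A$, where $1 \otimes 1 \mapsto 1$, $1 \otimes X \mapsto X$, $X \otimes X \mapsto X^2 = (U+V)X - UV$; one checks that $\gr_t$ of $1 \otimes 1$ is $2$ and of its image $1$ is $1$ (drop of $1 = -\chi$, equality), that $1\otimes X$ has $\gr_t = 0$ and image $X$ has $\gr_t = -1$ (drop of $1$), and for $X \otimes X$ ($\gr_t = -2$) the summands $(U+V)X$ have $\gr_t$ equal to $-1 - t$ and $-1-(2-t) = -3+t$, both $\geq -2 - 1 = -3$, while $UV\cdot 1$ has $\gr_t = 1 - t - (2-t) = -1 \geq -3$; all summands satisfy the bound. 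A split is comultiplication $\Delta$ with $\Delta(1) = X \otimes 1 + 1 \otimes X - (U+V) 1 \otimes 1$ and $\Delta(X) = X \otimes X - UV\, 1 \otimes 1$; the same kind of termwise check shows every summand drops $\gr_t$ by at most $1 = -\chi$ (e.g.\ $\Delta(1)$: $X\otimes 1$ and $1 \otimes X$ have $\gr_t = 0$ versus input $1$, drop $1$; $(U+V)1\otimes 1$ has $\gr_t = -t$ and $t-2$, both $\geq -1$). Finally, since $\F(S)$ on a general resolution is $\F$ of the saddle tensored with identities on the untouched circles, and identity maps preserve $\gr_t$, the per-factor estimates extend to the whole resolution.

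The main obstacle I anticipate is the bookkeeping of grading \emph{shifts} rather than the intrinsic algebra: the Euler-characteristic normalization of $\gr_q$ in \eqref{eq:grq chain complex} includes the $\lr{u} + n_+ - 2n_-$ term, so I must be careful that when an elementary cobordism changes the resolution vertex or the crossing count, the shift in $\gr_q$ matches the $\chi$ contribution correctly — in particular a saddle that merges a $0$-smoothing into a $1$-smoothing raises $\lr{u}$ by $1$ but the cobordism has $\chi = -1$, and I need to confirm these combine to give the stated inequality, not its reverse. In practice this is exactly the standard computation showing $d$ is $\gr_q$-degree-preserving on $\Kh_0$ together with the observation (already recorded in the paragraph preceding the lemma) that multiplication by $U$ drops $\gr_t$ by $t$ and by $V$ drops it by $2-t$, both positive for $t \in [0,2]$, so the ``error" terms $d_{1,0}, d_{0,1}, d_{1,1}$ only ever \emph{increase} $\gr_t$ relative to $d_{0,0}$. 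I would organize the write-up as: (i) additivity reduces to elementary cobordisms; (ii) a table of $\gr_t$-values on $\{1, X, 1\otimes 1, 1 \otimes X, X \otimes X\}$ and their images under $m$, $\Delta$, $\varepsilon$, unit; (iii) read off that each drop is at most $-\chi$; (iv) tensor with identities. No single step is deep; the care is entirely in the signs of the grading shifts.
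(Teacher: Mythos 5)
Your approach is the same as the paper's (which dispenses with this in one sentence: verify the inequality for cup, cap, and saddle, then tensor with identities). The structure of your argument is sound, but two of the numerical checks have slips. For the cap (counit), $\varepsilon(X) = 1 \in R$ has grading $0$, not $1$, so the grading changes by $0-(-1)=1=\chi$, an equality rather than a strict increase; the inequality still holds. In the split $\Delta(1)$, the summand $U\cdot 1\o 1$ has $\gr_t = \gr_q(1\o 1) - t = 2 - t$, not $-t$ (and $V\cdot 1\o 1$ has $\gr_t = t$, not $t-2$); the drop relative to the input $1$ with $\gr_t(1)=1$ is $1-(2-t)=t-1\in[-1,1]$, which is $\ge -1 = \chi$, so again the bound holds, but your stated values would actually violate it. (With your figures, $-t < -1$ for $t>1$, so ``both $\ge -1$'' would be false.) Finally, your worry about the $\lr{u} + n_+ - 2n_-$ shift is a non-issue for the lemma itself: the lemma is stated for abstract circle collections $Z_0,Z_1$ and a cobordism between them, where $\gr_q$ is the intrinsic grading on $A^{\o k}$ with no diagram-dependent shift; the shift bookkeeping only enters when deducing Corollary \ref{cor:d is non decreasing}, where, as you note, the $+1$ in $\lr{u}$ exactly cancels the $\chi=-1$ of the saddle.
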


\begin{proof}
This is easily verified for the elementary cup, cap, and saddle cobordisms, from which the statement follows. 
\end{proof}

\begin{corollary}
\label{cor:d is non decreasing}
For a link diagram $D$, the differential $d$ in $\CKh(D)$ is non-decreasing with respect to $\gr_t$. 
\end{corollary}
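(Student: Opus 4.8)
The plan is to deduce Corollary \ref{cor:d is non decreasing} directly from Lemma \ref{lem:grt filtered degree} by unwinding how the differential $d$ on $\CKh(D)$ is assembled from the TQFT maps applied to edge cobordisms in the cube of resolutions. Recall that $d$ is a signed sum of edge maps $s_{u,v} d_{u,v}$, where each $d_{u,v}$ is the saddle cobordism (a $1$-handle attachment) near one crossing together with the identity (product) cobordism elsewhere, so that $\F(d_{u,v}) \colon \F(D_u) \to \F(D_v)$ is the TQFT map associated to an elementary cobordism $S$ with $\chi(S) = -1$ (one saddle contributes $-1$ to the Euler characteristic, and product cobordisms contribute $0$).

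First I would note that it suffices to check the non-decreasing property on a homogeneous generator, since $\gr_t$ is a filtration grading and every element is a sum of homogeneous pieces. So fix a $\gr_t$-homogeneous element $g \in \CKh(D)^i$ supported at a single vertex $u$; applying $d$ produces a sum of terms, one for each edge $u \to v$ out of $u$. Each such term is $\pm \F(S)(g)$ for the elementary saddle-plus-identity cobordism $S$ from $D_u$ to $D_v$. Here I should be slightly careful: the quantum grading $\gr_q$ on $\CKh(D)$ incorporates the shift $\lr{u} + n_+ - 2n_-$ from \eqref{eq:grq chain complex}, and passing along an edge increases $\lr{u}$ by exactly $1$; meanwhile $\gr_t$ differs from $\gr_q$ only by the term $-(tm + (2-t)n)$ involving $U$- and $V$-powers, which is unaffected by the shift. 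By Lemma \ref{lem:grt filtered degree}, every homogeneous summand $z'$ of $\F(S)(g)$, computed \emph{without} the quantum shift, satisfies $\gr_t(z') \geq \gr_t(g) + \chi(S) = \gr_t(g) - 1$; adding back the $+1$ from the $\lr{u}$-shift on the target vertex then gives $\gr_t(\text{summand of } d(g)) \geq \gr_t(g)$, i.e.\ $d$ is non-decreasing. The sign $s_{u,v}$ does not affect $\gr_t$, and taking a direct sum over outgoing edges preserves the inequality since $\gr_t$ of a sum is at least the minimum of the $\gr_t$ of the summands.

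The main (and only real) obstacle is bookkeeping the grading shifts correctly — making sure the Euler-characteristic contribution $-1$ from the saddle in Lemma \ref{lem:grt filtered degree} is exactly cancelled by the $+1$ quantum shift built into \eqref{eq:grq chain complex} as one moves along a cube edge. Once that matches up, the corollary is immediate. Concretely, the proof is just: reduce to a homogeneous generator at a vertex $u$; observe $d(g)$ is a signed sum of $\F(S)$-images for edge cobordisms $S$ with $\chi(S) = -1$; apply Lemma \ref{lem:grt filtered degree}; and combine with the $+1$ shift in the quantum grading along each edge to conclude $\gr_t$ does not decrease.

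\begin{proof}
It suffices to verify the claim on a $\gr_t$-homogeneous element, and by additivity we may assume it is a homogeneous generator $g$ supported at a single vertex $u$ of the cube of resolutions. The differential applied to $g$ is a signed sum, over all edges $u \to v$ of the cube, of the terms $s_{u,v}\,\F(d_{u,v})(g)$, where $d_{u,v}$ is the cobordism from $D_u$ to $D_v$ given by a single saddle near one crossing together with the product cobordism elsewhere. Such a cobordism $S$ has $\chi(S) = -1$. By Lemma \ref{lem:grt filtered degree}, every homogeneous summand of $\F(d_{u,v})(g)$, computed at the level of the unshifted TQFT, has $\gr_t$ at least $\gr_t(g) - 1$ (where we momentarily ignore the quantum grading shift). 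Passing from vertex $u$ to vertex $v$ increases $\lr{u}$ by $1$, and by \eqref{eq:grq chain complex} and \eqref{eq:grt grading} this raises $\gr_t$ by $1$ on the target summand. Hence every homogeneous summand $z'$ of $d(g)$ satisfies $\gr_t(z') \geq \gr_t(g) - 1 + 1 = \gr_t(g)$. The signs $s_{u,v}$ do not affect $\gr_t$, and since $\gr_t$ of a sum is at least the minimum over its homogeneous summands, we conclude $\gr_t(d(g)) \geq \gr_t(g)$, as desired.
\end{proof}
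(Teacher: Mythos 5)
Your proof is correct and takes exactly the approach the paper intends: the corollary is deduced from Lemma \ref{lem:grt filtered degree} by observing that the $\chi(S) = -1$ contribution of each saddle edge cobordism is cancelled by the $+1$ shift in the quantum grading built into \eqref{eq:grq chain complex} as $\lr{u}$ increases along a cube edge. The paper omits this bookkeeping because it regards the corollary as immediate; you have simply written it out, and the accounting is right.
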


\subsubsection{Filtration by $\grt$}

\begin{definition}
\label{def:F^t filtration}
For each $t\in [0,2]$ and $\la\in \R$, let $F_\la^t$ denote the $\bF$-span of all $\gr_t$-homogeneous elements $z\in Kg(D)$ with $\gr_t(z) \geq \lambda$. 
\end{definition}

Note that, for a $\gr_t$-homogeneous chain $z\in \CKh(D)$, the induced grading $\gr_{F^t}(z)$, as defined in \eqref{eq:gr_F}, is equal to $\gr_t(z)$. More generally, for any nonzero $z\in \CKh(D)$, the induced filtration grading $\gr_{F^t}(z)$
is equal to the minimum $\gr_t$-grading among all homogeneous summands of $z$ when $z$ is expressed as an $\bF$-linear combination of elements of $Kg(D)$. In a slight abuse of notation, we will write $\gr_t(z):= \gr_{F^t}(z)$ even for non-homogeneous $z$. 

Recall the two properties in Definition \ref{def:filtration admits a grading and discrete}. 

\begin{figure}
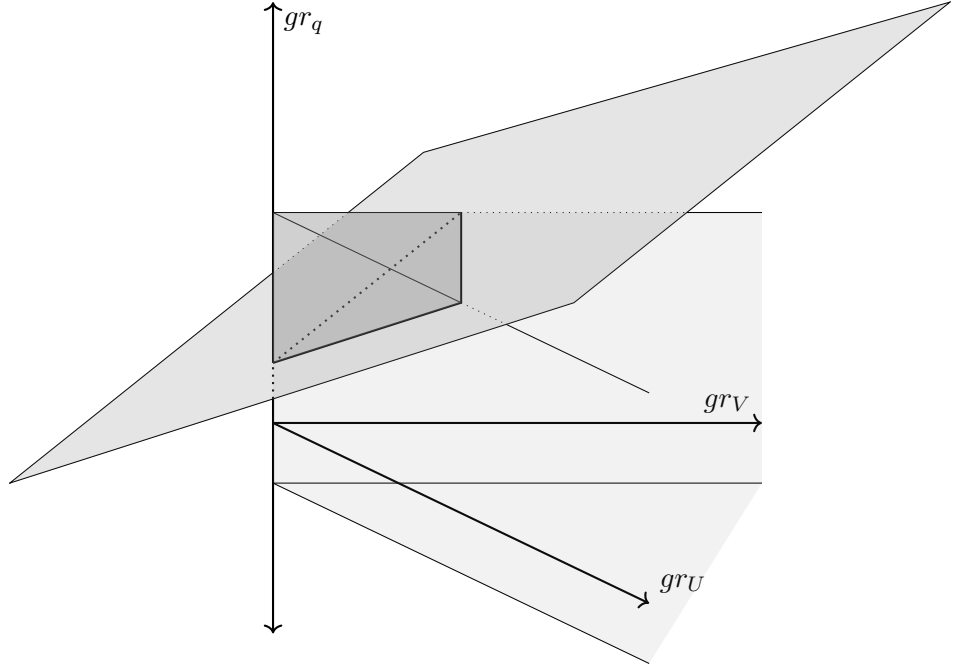

    \centering
    \includestandalone{images/filtration}
    \caption{An illustration of the proof of discreteness in Lemma \ref{lem:grt defines a filtration}. The image of $\kappa$ is discrete and contained in a set whose $\gr_q$ coordinate is bounded (the light pie-slice region). The plane consists of points whose dot product with $(-t, -(2-t), 1)$ equals $\la$. The darker conic region contains the images under $\kappa$ of all generators $U^m V^n y$ such that $\gr_t(U^m V^n y) \geq \la$. The plane is not parallel to the $\gr_U$ or $\gr_V$ axes when $t\in (0,2)$, implying that the conic region is compact, thus containing finitely many images of $\kappa$. Since each fiber of $\kappa$ is finite, this implies $F^t_\la$ is finitely generated. 
    }
    \label{fig:filtration is discrete}
\end{figure}

\begin{lemma}
\label{lem:grt defines a filtration}
For $t\in [0,2]$,  each $F_\la^t$ is an $\bF$-subcomplex of $\CKh(D)$, and $F^t = \{F_\la^t\}$ is a filtration (in the sense of Definition \ref{def:filtration}) which satisfies property \ref{item:F1}. If moreover $t\in (0,2)$, then $F^t$ satisfies \ref{item:F2}.  
\end{lemma}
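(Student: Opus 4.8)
The statement has three parts: (i) each $F_\la^t$ is a subcomplex, (ii) $\{F_\la^t\}$ is a filtration satisfying \ref{item:F1}, and (iii) for $t\in(0,2)$ it satisfies \ref{item:F2} (discreteness). Part (i) is immediate from Corollary \ref{cor:d is non decreasing}: since $d$ is non-decreasing with respect to $\gr_t$, it sends a $\gr_t$-homogeneous element of grading $\geq \la$ to a chain all of whose homogeneous summands have grading $\geq \la$, hence $d(F_\la^t)\subseteq F_\la^t$. For part (ii), I would check the three axioms of Definition \ref{def:filtration}: the nesting $F_\la^t\subseteq F_{\la'}^t$ for $\la\geq\la'$ is clear from the defining inequality; the union being all of $\CKh(D)$ holds because every element of the $\bF$-basis $Kg(D)$ is $\gr_t$-homogeneous and so lies in $F_\la^t$ for $\la$ small enough; and the vanishing condition \eqref{item:filt is zero} follows because $\gr_q$ is bounded above on $\Kg_0(D)$ (finitely many Khovanov generators) and $-(tm+(2-t)n)\leq 0$ for $t\in[0,2]$ and $m,n\geq 0$, so $\gr_t$ is bounded above on $Kg(D)$ and $F_\la^t=0$ once $\la$ exceeds that bound. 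Property \ref{item:F1} holds because $F_\la^t$ is by definition spanned by homogeneous basis elements, so any nonzero $z$ is a finite $\bF$-combination of basis elements with a well-defined minimum $\gr_t$-value among its summands, call it $\la(z)$; then $z\in F_{\la(z)}^t$ and $z\notin F_\la^t$ for $\la>\la(z)$.

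\textbf{The discreteness step.} The substantive part is (iii), and I would follow the picture in Figure \ref{fig:filtration is discrete}. Introduce the map $\kappa\colon Kg(D)\to \Z^3$ sending $U^mV^n y\mapsto (\gr_U, \gr_V, \gr_q) = (m, n, \gr_q(U^mV^n y)) = (m,n,\gr_q(y)-2(m+n))$. Fix $g\in\Kg_0(D)$; since $\gr_q(y)$ is then constant, the image of the set $\{U^mV^n g\}$ under $\kappa$ is $\{(m,n,\gr_q(g)-2(m+n)) : m,n\geq 0\}$, which is a discrete subset of the plane $\gr_q = \gr_q(g)-2(\gr_U+\gr_V)$, and each such fiber is finite (at most one generator per pair $(m,n)$ per Khovanov generator). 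Ranging over the finitely many $g\in\Kg_0(D)$, the total image $\kappa(Kg(D))$ is discrete and every fiber of $\kappa$ is finite. Now observe $\gr_t(U^mV^n y) = \gr_q(U^mV^n y) - tm - (2-t)n$ is the dot product of $\kappa(U^mV^n y)$ with the fixed vector $w=(-t,-(2-t),1)$. So $F_\la^t$ is spanned by the elements whose $\kappa$-image lies in the half-space $\{x\in\R^3 : x\cdot w \geq \la\}$ intersected with $\kappa(Kg(D))$. It remains to see this intersection is finite. The image $\kappa(Kg(D))$ lies in the union over $g$ of the planes $P_g:\ \gr_q = \gr_q(g) - 2(\gr_U+\gr_V)$ with $\gr_U,\gr_V\geq 0$; on such a plane, $x\cdot w = \gr_q(g) - 2(m+n) - tm - (2-t)n = \gr_q(g) - (t+2)m - (4-t)n$, and since $t\in(0,2)$ both coefficients $t+2$ and $4-t$ are strictly positive, so $x\cdot w\geq\la$ forces $m$ and $n$ into a bounded range. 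Thus only finitely many basis elements contribute to $F_\la^t$, and since $\CKh(D)$ is free over $\bF$ on $Kg(D)$, the subspace $F_\la^t$ is finite-dimensional. This is \ref{item:F2}.

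\textbf{Main obstacle.} The only real content is making the compactness argument in part (iii) clean: one must use the strict positivity of both coefficients $t+2$ and $4-t$, which fails precisely at the endpoints $t=0$ and $t=2$ (where one coefficient becomes $2$ and the other also stays positive, but then the relevant projection degenerates — at $t=0$ the vector $w=(0,-2,1)$ is orthogonal to the $\gr_U$-axis, so the half-space contains a whole ray of lattice points; this is why the endpoints are handled separately, as announced in Section \ref{sec:filtered-distinguished-generators}). Everything else is bookkeeping with the definitions, so I would keep parts (i) and (ii) to a couple of sentences and devote the bulk of the written proof to the lattice/half-space argument, referring to Figure \ref{fig:filtration is discrete} for intuition.
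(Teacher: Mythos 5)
Your overall structure is right and parts (i) and (ii) are fine. The discreteness step, however, is built on a wrong formula for $\gr_t$, and the error matters.

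You write $\gr_t(U^mV^ny) = \gr_q(U^mV^ny) - tm - (2-t)n$, and accordingly put $\gr_q(U^mV^ny) = \gr_q(y) - 2(m+n)$ in the third coordinate of $\kappa$. But the paper's definition (Equation \eqref{eq:grt grading}) is $\gr_t(U^mV^ny) = \gr_q(y) - tm - (2-t)n$, where $\gr_q(y)$ is the quantum grading of the \emph{Khovanov generator} $y \in \Kg_0(D)$, without the $-2(m+n)$ shift. You can confirm this from the sentence immediately after the definition: multiplication by $U$ is supposed to decrease $\gr_t$ by $t$, not by $t+2$ as your formula would give; and from the remark after Lemma \ref{lem:grt defines a filtration}, where $z \in F^0_\la$ implies $U^m z \in F^0_\la$ for all $m$ — which requires $\gr_0$ to be unchanged by $U$-multiplication. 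Consequently the paper's $\kappa$ has third coordinate $\gr_q(g)$ (bounded, since $\Kg_0(D)$ is finite), and the dot-product vector is $(-t,-(2-t),1)$ with coefficients $t$ and $2-t$ on the $m,n$ axes, one of which vanishes at each endpoint.

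With your formula the coefficients become $t+2$ and $4-t$, which are strictly positive on the entire closed interval $[0,2]$, so your argument would ``prove'' that $F^t$ is discrete for all $t \in [0,2]$ — including $t = 0,2$, where it is explicitly \emph{not} discrete. Your own ``main obstacle'' paragraph is symptomatic of the confusion: you acknowledge that both of your coefficients stay positive at the endpoints, and then fall back to the paper's picture (where $w = (0,-2,1)$ is orthogonal to the $\gr_U$-axis) to explain why discreteness fails there — but that explanation uses the paper's bounded third coordinate $\gr_q(g)$, not your unbounded one $\gr_q(U^mV^ny)$. Once you switch to the correct formula, your lattice/half-space picture matches the paper's and the argument closes as intended: the image of $\kappa$ lies in $\{a,b \geq 0, \lvert c\rvert \leq M\}$, and the half-space $\{ -ta - (2-t)b + c \geq \la\}$ meets this slab in a bounded region precisely when both $t>0$ and $2-t>0$.
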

\begin{proof}
That $F_\la^t$ is an $\bF$-subcomplex follows from Corollary \ref{cor:d is non decreasing}. Items (1) and (2) of Definition \ref{def:filtration} are immediate. For item \eqref{item:filt is zero}, let $\mu$ denote the maximal $\gr_q$-grading among all elements of $\Kg_0(D)$ (note that $\gr_q(z) = \gr_t(z)$ for $z\in \Kg_0(D)$). Since multiplication by $U$ and $V$ does not increase $\gr_t$, we have $F_\mu^t \neq \{0\}$ but $F_\la^t = \{0\}$ for all $\la>\mu$, verifying item \eqref{item:filt is zero}. To show that property \ref{item:F1} holds, take $z\in \CKh(D)$ and write $z$ as an $\bF$-linear combination of elements of $\Kg(D)$. Denote by $\la$ the minimal $\gr_t$-grading of these homogeneous summands. Then $z\in F_\la^t$ but $z\not\in F_{\la'}^t$ for any $\la'>\la$, which demonstrates property \ref{item:F1}.

Now we show that $F^t$ is discrete when $t\in (0,2)$. Consider the function 
\[
\kappa \: \Kg(D) \to \Z^3 
\]
given by $\kappa(U^m V^n g) = (m,n,\gr_q(g))$. The following properties are immediate:
\begin{itemize}
    \item $\gr_t(z)$ is equal to the dot product $\kappa(z) \cdot (-t,-(2-t), 1)$ for any $z\in \Kg(D)$. 
    \item Each fiber of $\kappa$ is finite. 
    \item The image of $\kappa$ is contained in $\{ (a,b,c) \in \R^3 \mid a,b\geq0, \vert c \vert \leq M\}$ for some $M\in \R$. 
\end{itemize}
By the first point,  $F^t_\la$ is the $\bF$-span of the preimage of all $(a,b,c) \in \Z^3$ such that $(a,b,c)\cdot (-t,-(2-t), 1) \geq \la$. When $t\in (0,2)$, the second and third points imply that this spanning set is finite, verifying property \ref{item:F2}. See Figure \ref{fig:filtration is discrete} for an illustration.
\end{proof}

By Lemma \ref{lem:filtrations}, if $t\in (0,2)$ then for any subspace $N\leq \CKh(D)$, the induced filtration grading $\gr_{\b{F}^t}$ on the quotient is given by the $\gr_t$-grading of some representative.
If $t=0,2$ then the filtration does not satisfy property \ref{item:F2}: indeed, if $z\in F^0_\la$, then $U^mz\in F^0_\la$ for any $m\geq 0$, and likewise if $z\in F^2_{\la}$ then $V^nz\in F^2_\la$ for any $n\geq 0$. However, we have the following analogous statement. 

\begin{lemma}
\label{lem:grading at 0,2}
Let $N\leq \CKh(D)$ be an $\bF$-subspace and let $t\in \{0,2\}$. Then for every $z\in \CKh(D)\setminus N$, $\gr_{\b{F}^t}(z+N) = \gr_F(z+n)$ for some  $n\in N$. Moreover, $\gr_{F^t}(\CKh(D)/N)$, as defined in \eqref{eq:gr_F(M)}, is equal to $\gr_{\b{F}^t}(z+N)$ for some $z\in \CKh(D)\setminus N$.  
\end{lemma}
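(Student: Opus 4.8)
The plan is to exploit the fact that, even though $F^t$ fails property \ref{item:F2} at $t\in\{0,2\}$, the \emph{set of values} attained by $\grt$ on nonzero chains is still a discrete subset of $\R$ bounded above, and this is all one needs for the relevant suprema to be maxima. Concretely, set $S_t:=\{\grt(w)\mid w\in\Kg(D)\}$ and let $\mu$ be the largest $\grq$-grading among the finitely many elements of $\Kg_0(D)$. Since every element of $\Kg(D)$ has the form $U^mV^ng$ with $g\in\Kg_0(D)$, and $\grt(U^mV^ng)=\grq(g)-(tm+(2-t)n)\leq\mu$, we have $S_t\subseteq(-\infty,\mu]$. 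Now $\grt(U^mV^ng)=\grq(g)-2n$ when $t=0$ and $\grq(g)-2m$ when $t=2$, so in either case $S_t$ is a finite union of arithmetic progressions unbounded below; hence $S_t\cap[\la,\infty)$ is finite for every $\la\in\R$. The elementary consequence I will use is: \emph{every nonempty $T\subseteq S_t$ that is bounded above has a maximum}. Indeed, with $L=\sup T<\infty$, the set $T\cap[L-1,\infty)$ is a nonempty subset of the finite set $S_t\cap[L-1,\infty)$, so it has a maximum $L'$; every element of $T$ is $\leq L'$, forcing $L'\geq L$, while $L'\in T$ forces $L'\leq L$, so $L=L'\in T$.

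Next I would reduce the two assertions to this fact by unwinding the definition of the induced grading. For a subspace $N\leq\CKh(D)$ and a coset $z+N$, one has $z+N\in\b{F}^t_\la$ if and only if $z+N$ has a representative $z'$ with $\grt(z')\geq\la$ (using that $F^t$ admits a grading by Lemma \ref{lem:grt defines a filtration}, so $z'\in F^t_\la$ iff $\grt(z')\geq\la$); therefore
\[
\gr_{\b{F}^t}(z+N)=\sup\{\grt(z')\mid z'\in z+N\}.
\]
If $z\notin N$, then $0\notin z+N$, so every representative $z'$ is nonzero, and $\grt(z')$ — being the minimum of the $\grt$-gradings of the $\Kg(D)$-summands of $z'$ — lies in $S_t$. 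Hence $T:=\{\grt(z')\mid z'\in z+N\}$ is a nonempty subset of $S_t$ bounded above by $\mu$, so by the fact above it has a maximum; any $z'=z+n$ with $n\in N$ realizing it satisfies $\gr_{\b{F}^t}(z+N)=\grt(z+n)$, which is the first claim.

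For the second claim, assume $N\subsetneq\CKh(D)$ (so that, as in Lemma \ref{lem:filtrations}(4), there is something to prove). By the first part, for each $z\in\CKh(D)\setminus N$ the value $\gr_{\b{F}^t}(z+N)$ equals $\grt$ of some nonzero chain, hence lies in $S_t$ and is $\leq\mu$; thus $\{\gr_{\b{F}^t}(z+N)\mid z\in\CKh(D)\setminus N\}$ is a nonempty subset of $S_t$ bounded above, and applying the elementary fact once more shows its supremum $\gr_{F^t}(\CKh(D)/N)$ is attained at some coset $z+N$. I expect the only part requiring any care is the bookkeeping identity $\gr_{\b{F}^t}(z+N)=\sup\{\grt(z')\mid z'\in z+N\}$ together with the observation that this set lands in $S_t$; the genuine idea is simply that discreteness (rather than finite-dimensionality) of $S_t$ is what compensates for the loss of property \ref{item:F2}, so everything reduces to the order-theoretic statement above.
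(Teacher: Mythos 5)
Your argument is correct and is exactly the approach the paper sketches: the paper's proof simply notes that the value set $\{\gr_{F^t}(z)\mid z\in\CKh(D)\setminus\{0\}\}$ is a discrete subset of $\R$ (in fact contained in $\Z$) bounded above, and you have filled in precisely the bookkeeping — the identification $\gr_{\b{F}^t}(z+N)=\sup\{\gr_t(z')\mid z'\in z+N\}$ and the observation that any bounded-above subset of that discrete set attains its supremum — that the paper leaves implicit.
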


\begin{proof}
At $t=0,2$, the set of possible gradings $\{\gr_{F^t}(z)\mid z\in \CKh(D)\setminus \{0\} \}$ is a discrete subset of $\R$ (in fact, it is contained in $\Z$). The statement follows. 
\end{proof}

Note that, while each $F^t_\la$ is preserved by the differential, it is not preserved under the action of the ground ring: $U^{m} V^{n} \cdot F_\la^t \subset F_{\la - (tm + (2-t) n)}^t$. 

\subsection{Definition of the concordance invariant $s_t$}

Fix a link diagram $D$ and $t\in [0,2]$. Consider the filtration $F^t$ on $\CKh(D)$ from Definition \ref{def:F^t filtration}. As discussed in Section \ref{sec:filtrations}, the homology $Kh(D)$ inherits a filtration from $F^t$. In what follows, we consider only the nontorsion classes, so we take the quotient of $\Kh(D)$ by the $R$-torsion submodule $T(D)$. The quotient $\Kh(D) /T(D)$ again inherits a filtration, as explained in Section \ref{sec:Extracting numerical invariants}. To simplify the notation we will denote the induced filtration grading on $Kh(D)/T(D)$ again by $\gr_t$. We are now ready to define our invariant. 

\begin{definition}
For a diagram $D$ of an oriented link $L$, let 
\begin{align}
\begin{aligned}\label{eq:def1}
\gr_t(D) &= \sup \{ \gr_t(\b{\eta}) \mid \b{\eta} \in \Kh(D)/T(D) \text{ is nonzero} \}, \\
s_t(D) &= \gr_t(D) - 1. 
\end{aligned}
\end{align}
We define $s_t(L):= s_t(D)$, where $D$ is any diagram of $L$. 
\end{definition}

Lemma \ref{lem:filtrations}, Lemma \ref{lem:grt defines a filtration}, and Lemma \ref{lem:grading at 0,2} imply that $\gr_t(D) = \gr_t(z)$ for some cycle $z\in \CKh(D)$ whose homology class $[z]$ is nontorsion.

\begin{proposition}
\label{prop:st is invariant}
If $D$ and $D'$ are related by a Reidemeister move, then $\gr_t(D) = \gr_t(D')$. It follows that $\gr_t(D)$ (and hence $s_t(D)$) is an invariant of $L$.
\end{proposition}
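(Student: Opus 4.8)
The plan is to show that a Reidemeister move induces a filtered chain homotopy equivalence $\CKh(D) \to \CKh(D')$ which, moreover, respects the torsion submodules and induces a filtered isomorphism on $\Kh(D)/T(D)$, and then deduce equality of the two suprema. More concretely, let $f \colon \CKh(D) \to \CKh(D')$ be the chain homotopy equivalence assigned to the Reidemeister move (as in Theorem \ref{thm:[[D]] is invariant} and the subsequent discussion of cobordism maps), with homotopy inverse $g$. I would first argue that $f$ and $g$ are \emph{filtered} maps of degree $0$ with respect to $\gr_t$. This follows from Lemma \ref{lem:grt filtered degree}: each elementary piece of a Reidemeister cobordism has Euler characteristic $0$, so the induced map is $\gr_t$-non-decreasing; and since the composites $gf$ and $fg$ are chain homotopic to the identity (which is filtered of degree $0$), a standard argument forces $f$, $g$ to actually preserve $\gr_t$ on the nose, i.e. be filtered degree-$0$ chain maps. (Alternatively, Reidemeister moves I, II, III can be checked by hand, as the reference \cite{BN2} does for the underlying invariance.)

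Next I would pass to homology. Since $f$ is a chain homotopy equivalence, the induced map $f_* \colon \Kh(D) \to \Kh(D')$ is an $R$-module isomorphism, hence carries $T(D)$ isomorphically onto $T(D')$, and therefore descends to an $R$-module isomorphism $\til f_* \colon \Kh(D)/T(D) \to \Kh(D')/T(D')$. Because $f$ is a filtered map, $f_*$ and hence $\til f_*$ are filtered of degree $\geq 0$ (they do not decrease $\gr_t$); the same applies to $\til g_*$ induced by $g$, and $\til g_* = (\til f_*)^{-1}$. A filtered isomorphism whose inverse is also filtered preserves the induced grading exactly: for any nonzero $\b\eta \in \Kh(D)/T(D)$ we get $\gr_t(\til f_*(\b\eta)) \geq \gr_t(\b\eta)$ and $\gr_t(\b\eta) = \gr_t(\til g_* \til f_* (\b\eta)) \geq \gr_t(\til f_*(\b\eta))$, so equality holds. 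Taking the supremum over all nonzero classes (using Lemma \ref{lem:filtrations} together with Lemma \ref{lem:grt defines a filtration} and Lemma \ref{lem:grading at 0,2} to know the suprema are attained and finite) yields $\gr_t(D) = \gr_t(D')$, and the formula $s_t(D) = \gr_t(D) - 1$ gives the claim for $s_t$. Since any two diagrams of $L$ are connected by a finite sequence of Reidemeister moves, $\gr_t(D)$ and $s_t(D)$ depend only on $L$.

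The main obstacle I anticipate is the claim that the chain homotopy equivalence $f$ can be taken to be \emph{filtered of degree exactly $0$} (not merely non-decreasing) in a way that is robust across all three Reidemeister moves. The subtlety is that \cite{BN2} only guarantees a chain homotopy equivalence at the level of the Bar-Natan category, and one must check that applying the TQFT $\F$ and tracking the $U$- and $V$-power filtrations does not introduce filtration-lowering terms. I would handle this by invoking Lemma \ref{lem:grt filtered degree} for the zero-Euler-characteristic cobordisms underlying the Reidemeister moves to get the non-decreasing property, and then run the formal argument above: if $f$ has filtered degree $d_f \geq 0$ and $g$ has filtered degree $d_g \geq 0$, then $gf \simeq \id$ forces $d_f + d_g \leq 0$ at the level of the induced grading, hence $d_f = d_g = 0$. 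One should be a little careful that "filtered degree $\geq 0$" behaves well under composition and under passing to homology and quotients, but this is exactly the content of Section \ref{sec:Extracting numerical invariants} and Lemma \ref{lem:filtrations}, so no new ideas are needed there.
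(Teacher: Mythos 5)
Your proof is correct and is essentially the paper's proof, just with the details unpacked: the paper's proof observes that the Reidemeister chain homotopy equivalences are built from degree-zero cobordisms, invokes Lemma \ref{lem:grt filtered degree} for the non-decreasing property, and concludes that the induced isomorphism on homology preserves the filtration. One small clarification on your exposition: the worry in your last paragraph about arranging the chain map $f$ to be filtered of degree \emph{exactly} $0$ is a red herring, and the claim in your first paragraph that $f$ and $g$ ``preserve $\gr_t$ on the nose'' at the chain level is in general false (a chain homotopy equivalence can send a nonzero chain to zero, or to something of strictly larger $\gr_t$); moreover the argument $gf \simeq \id \Rightarrow d_f + d_g \leq 0$ is only valid after passing to homology, since $gf$ is merely homotopic to the identity at the chain level. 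None of this matters, though, because that claim is never used: all that is needed is that $f$ and $g$ do not decrease $\gr_t$ (which Lemma \ref{lem:grt filtered degree} gives), and then the homology-level sandwich argument in your middle paragraph shows $\gr_t(\til{f}_*(\b{\eta})) = \gr_t(\b{\eta})$ for every nonzero class, which is exactly how the paper concludes.
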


\begin{proof}
In Bar-Natan's cobordism category, the cobordisms in the chain homotopy equivalence relating the complex for $D$ to the complex for $D'$ have degree zero after the appropriate quantum grading shifts for $D$ and $D'$ are introduced. By Lemma \ref{lem:grt filtered degree}, the induced isomorphism $\Kh(D) \to \Kh(D')$ preserves the filtration, so $\gr_t(D) = \gr_t(D')$.
\end{proof}

The following standard argument shows $s_t$ is a concordance invariant, essentially because the degree of cobordisms are entirely controlled by their Euler characteristic. 

\begin{theorem}
\label{thm:concordance-invariant}
If two knots $K_0$ and $K_1$ are concordant, then $\sat(K_0) = \sat(K_1)$.
\end{theorem}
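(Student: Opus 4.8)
The plan is to use the standard concordance argument: a concordance from $K_0$ to $K_1$ gives a connected genus-zero cobordism $S$ in $S^3 \times I$, and the same cobordism run backwards gives $\b{S}$ from $K_1$ to $K_0$; both have Euler characteristic $\chi(S) = \chi(\b{S}) = 0$. First I would fix oriented diagrams $(D_0, o_0)$ and $(D_1, o_1)$ for $K_0$ and $K_1$. The cobordism $S$ induces a chain map $\CKh(D_0) \to \CKh(D_1)$ which by Lemma \ref{lem:grt filtered degree} is $\gr_t$-non-decreasing in the sense that every homogeneous summand of the image of a $\gr_t$-homogeneous element $z$ has $\gr_t$ at least $\gr_t(z) + \chi(S) = \gr_t(z)$; hence the induced map on homology is filtered of degree $\geq 0$, and likewise for $\b{S}$.

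Next I would pass to the torsion quotients. Since $K_0$ and $K_1$ are knots, Corollary \ref{cor:induced map is injective mod torsion} gives that the induced maps
\[
\til{S}_* \: \Kh(D_0)/T(D_0) \to \Kh(D_1)/T(D_1), \qquad \til{\b{S}}_* \: \Kh(D_1)/T(D_1) \to \Kh(D_0)/T(D_0)
\]
are both injective; moreover the composite $\til{\b{S}}_* \circ \til{S}_*$ is induced by the closed-off cobordism $\b{S} \circ S$, but more to the point, by Proposition \ref{prop:localized-map-injective} after localizing, $\til{S}_*$ sends $\s_{o_0}$ (mod torsion) to a $(V-U)$-multiple of $\s_{o_1}$; since over $R_\D$ the map is an isomorphism and $\Kh(K_i)/T(K_i)$ injects into $\KhD(K_i)$, the image of $\til{S}_*$ contains a nonzero multiple of every class. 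Concretely: take a nonzero $\b{\eta} \in \Kh(D_0)/T(D_0)$ realizing $\gr_t(D_0)$ — such an $\b{\eta}$ exists, indeed realized by an honest cycle, by the remark following the definition of $s_t$ together with Lemmas \ref{lem:filtrations}, \ref{lem:grt defines a filtration}, \ref{lem:grading at 0,2}. Then $\til{S}_*(\b{\eta})$ is nonzero in $\Kh(D_1)/T(D_1)$ by injectivity, so $\gr_t(D_1) \geq \gr_t(\til{S}_*(\b{\eta})) \geq \gr_t(\b{\eta}) = \gr_t(D_0)$, where the middle inequality is the filtered-degree-$\geq 0$ property of $\til{S}_*$. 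Running the symmetric argument with $\b{S}$ gives $\gr_t(D_0) \geq \gr_t(D_1)$, hence equality, and therefore $\sat(K_0) = \gr_t(D_0) - 1 = \gr_t(D_1) - 1 = \sat(K_1)$.

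The one point requiring a little care — and the main obstacle — is making sure the filtered maps behave correctly on the \emph{quotient} $\Kh/T$ rather than just on $\Kh$ itself: I need that if $\b{\eta}$ has $\gr_t(\b{\eta}) = \la$ in $\Kh(D_0)/T(D_0)$ then $\til{S}_*(\b{\eta})$ has $\gr_t \geq \la$ in $\Kh(D_1)/T(D_1)$. This follows because by Lemma \ref{lem:grading at 0,2} (for $t \in \{0,2\}$) and Lemma \ref{lem:filtrations} (for $t \in (0,2)$) the grading $\gr_t(\b{\eta})$ is realized by an actual cycle representative $z$ with $\gr_t(z) = \la$; then $\F(S)(z)$ is a cycle representing $\til{S}_*(\b{\eta})$ with $\gr_t(\F(S)(z)) \geq \la$ by Lemma \ref{lem:grt filtered degree}, and since the induced filtration grading on a quotient is the supremum over representatives, $\gr_t(\til{S}_*(\b{\eta})) \geq \gr_t(\F(S)(z)) \geq \la$. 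The injectivity from Corollary \ref{cor:induced map is injective mod torsion} ensures $\til{S}_*(\b{\eta}) \neq 0$ so this lower bound is not vacuous. Everything else is bookkeeping: realizing the concordance as a connected genus-zero cobordism, its reverse, and invoking $\chi = 0$.
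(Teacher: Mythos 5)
Your proof is correct and follows essentially the same argument as the paper: use the concordance and its reverse (both with $\chi = 0$), apply Lemma \ref{lem:grt filtered degree} for the filtered-degree bound and Corollary \ref{cor:induced map is injective mod torsion} for non-vanishing modulo torsion, and conclude equality of $\gr_t(D_0)$ and $\gr_t(D_1)$. The extra paragraph you add about realizing the grading of a class in $\Kh/T$ by an honest cycle representative (via Lemmas \ref{lem:filtrations} and \ref{lem:grading at 0,2}) is a point the paper handles implicitly in the remark following the definition of $s_t$; spelling it out is fine but not a new route.
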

\begin{proof}
Fixing a concordance $S \: K_0 \to K_1$ and diagrams $D_i$ for $K_i$, we obtain an $R$-linear map $S_* \: \Kh(D_0) \to \Kh(D_1)$. Pick $\zeta_0 \in \Kh(D_0)$, a nontorsion class with maximal $\gr_t$-grading, i.e.\ $\gr_t(\zeta_0) = \gr_t(D_0)$. By Corollary \ref{cor:induced map is injective mod torsion}, $S_*(\zeta_0)$ is nontorsion (in particular, nonzero) in $\Kh(D_1)$. Since $\chi(S) = 0$, Lemma  \ref{lem:grt filtered degree} and Proposition \ref{prop:st is invariant} imply that $\gr_t(D_0) = \gr_t(\zeta_0) \leq \gr_t(S_*(\zeta_0)) \leq \gr_t(D_1)$. Repeating the same argument with the reversed cobordism $\overline{S}: K_1 \to K_0$, we obtain $\gr_t(D_0) = \gr_t(D_1)$ (and hence $s_t(K_0) = s_t(K_1)$). 
\end{proof}

\begin{remark}
We can also define a related $t$-modified theory more akin to Ozsv{\'a}th-Stipsicz-Szab{\'o}'s  \cite{OSS-upsilon} original $\Upsilon$ formulation as follows. Let $\mathcal{P}$ denote the ring of long power series, consisting of formal sums $\sum\limits_{a\in A} r_a w^a$ where  $A\subset \R_{\geq 0}$ is well-ordered and $r_a\in \bF$. For $t\in [0,2]$, consider the ring homomorphism $\phi_t \: R \to \mathcal{P}$ given by $U\mapsto w^t, V\mapsto -w^{2-t}$. Note that $\phi_t(c) =w^t + w^{2-t} \neq 0$ for all $t\in [0,2]$.

For a link diagram $D$, form the complex $\CKh(D)\o_R \mathcal{P}$ obtained by base change via $\phi_t$. Alternatively, as in \cite{OSS-upsilon}, one can work over the ground ring $\bF[w^{1/n}]$ when $t=m/n$ is rational, with the understanding that $\bF[w^{1/n}]$ is naturally a subring of $\mathcal{P}$. A benefit of this approach is that the ground ring is a PID (or, when using $\mathcal{P}$, that it functions similarly to a PID in that finitely generated $\mathcal{P}$-modules decompose as direct sums of cyclic modules \cite[Section 11]{Brandal}). On the other hand, at $t=1$ we have $\phi_1(U+V) = 0$, causing a dramatic change in the differential. This could potentially lead to a discontinuity in an otherwise continuous invariant. 
\end{remark}

\subsection{Properties of the invariant}
\label{sec:properties}

We now list some properties of $s_t$ and discuss some examples and observations. 

\subsubsection{Immediate properties}

From the definition of $s_t$, some properties are immediate. For instance, the symmetry of the chain complex under swapping $U$ and $V$ yields the following symmetry in the function $s_t(L)$.

\begin{proposition}
\label{prop:symmetry}
For any link $L$ and $t\in [0,2]$, $s_t(L) = s_{2-t}(L)$. 
\end{proposition}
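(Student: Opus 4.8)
The plan is to exhibit an explicit grading-preserving chain isomorphism of $\CKh(D)$ that swaps the roles of $U$ and $V$, and then track how it interacts with the $\gr_t$ filtration. First I would note that the Frobenius algebra $A = R[X]/((X-U)(X-V))$ is manifestly symmetric under the involution of $R = \bF[U,V]$ exchanging $U \leftrightarrow V$: the defining relation $(X-U)(X-V)$ is unchanged, the trace $\varepsilon$ is unchanged (it does not see $U$ or $V$), and the comultiplication formulas $\Delta(1) = X\o 1 + 1 \o X - (U+V)1\o 1$ and $\Delta(X) = X\o X - UV\, 1\o 1$ are symmetric in $U,V$. Hence the ring automorphism $\tau\:R\to R$, $U\mapsto V$, $V\mapsto U$, together with the $A$-module map $X\mapsto X$ (semilinear over $\tau$), induces an automorphism of the TQFT $\F$ compatible with all cobordism maps. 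Applying this vertex-by-vertex to the cube of resolutions for $D$ yields a $\tau$-semilinear chain isomorphism $\Phi\:\CKh(D)\to\CKh(D)$ which fixes each Khovanov generator $g\in\Kg_0(D)$ and sends $U^mV^n g\mapsto V^mU^n g$.

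Next I would check the effect of $\Phi$ on gradings. Since $\Phi$ fixes the quantum grading (it fixes $\Kg_0(D)$ and $\gr_q(U^mV^ng) = \gr_q(g) - 2(m+n)$ is symmetric in $m,n$), and since it exchanges $\gr_U$ and $\gr_V$, the formula \eqref{eq:grt grading} gives
\[
\gr_t(\Phi(U^mV^ng)) = \gr_t(V^mU^ng) = \gr_q(g) - \big(t\cdot n + (2-t)\cdot m\big) = \gr_{2-t}(U^mV^ng).
\]
Therefore $\Phi$ carries the filtration $F^t$ onto the filtration $F^{2-t}$; that is, $\Phi(F^t_\la) = F^{2-t}_\la$ for all $\la$. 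Because $\Phi$ is a chain isomorphism, it induces an isomorphism on homology $\Kh(D)\to\Kh(D)$; since $\tau$ is a ring automorphism of $R$, $\Phi$ carries the $R$-torsion submodule $T(D)$ isomorphically onto itself, and hence descends to an isomorphism $\Kh(D)/T(D)\to\Kh(D)/T(D)$ sending the $F^t$-induced filtration to the $F^{2-t}$-induced one. Comparing suprema in \eqref{eq:def1}, $\gr_t(D) = \gr_{2-t}(D)$, whence $s_t(L) = \gr_t(D)-1 = \gr_{2-t}(D)-1 = s_{2-t}(L)$.

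The only point requiring genuine care—and the one I would treat as the main obstacle—is the sign bookkeeping needed to turn the vertex-wise application of $\tau\otimes(\text{-})$ into an honest chain map. As with the involution $I$ constructed from $\iota$ in the excerpt, applying a $U(1)\times U(1)$-symmetry at each vertex of the cube may fail to commute with the signed edge maps $s_{u,v}d_{u,v}$ on the nose. However, here the situation is easier than for $\iota$: unlike $\iota$, the automorphism $\tau$ does \emph{not} twist the Frobenius structure—it fixes $\varepsilon$ and fixes $\Delta$—so the naive vertex-wise map already commutes with every unsigned saddle, split, and merge cobordism map, and it commutes with the Reidemeister chain homotopy equivalences since those are built from the same TQFT. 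Thus no sign corrections are needed and $\Phi$ is literally the vertex-wise map; one only has to observe that $\Phi$ commutes with the fixed choice of edge signs $s_{u,v}$, which it does because those signs are combinatorial data independent of the TQFT. I would state this observation explicitly (perhaps citing the analogous discussion around $I$) and then the grading computation above completes the proof.
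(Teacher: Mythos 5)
Your proof is correct and takes essentially the same route as the paper's: both construct the $\bF$-linear (i.e.\ $\tau$-semilinear) automorphism of $\CKh(D)$ swapping $U\leftrightarrow V$, observe that it carries $F^t$ to $F^{2-t}$, and note that it commutes with $\partial$ and preserves $T(D)$. Your extra discussion of sign bookkeeping correctly concludes that no sign corrections are needed (since $\tau$ fixes the Frobenius structure, unlike $\iota$), which is the content implicit in the paper's terse assertion $\sigma\partial=\partial\sigma$.
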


\begin{proof}
For any $\tau \in [0,2]$, let $\Kh_\tau(L)$ denote $\Kh(L)$ equipped with the filtration $F^\tau$. 
Consider the ring isomorphism $\sigma \: R\to R$ which interchanges $U$ and $V$. Fix a diagram $D$ of $L$ and extend $\sigma$ to a $\bF$-linear automorphism, also denoted $\sigma$, of $\CKh(D)$ by setting $\sigma(U^m V^n g) = U^n V^m g$ for any generator in $\Kg(D)$. Observe that $\gr_t(z) = \gr_{2-t}(\sigma(z))$ for any $z\in \CKh(D)$. Moreover, $\sigma \partial = \partial \sigma$, so $\sigma$ descends to a filtration-preserving $\bF$-linear isomorphism $\Kh_t(L) \xrightarrow{\sim} \Kh_{2-t}(L)$. Moreover, $\sigma$ sends $T(D)$ to $T(D)$, since if $\eta \in T(D)$ with $r\eta = 0$ for some $r\in R\setminus \{0\}$, then $\sigma(r) \sigma(\eta) = 0$ as well.   
\end{proof}

The following proposition explains the relationship between $s_t$ and Rasmussen's invariant \cite{Rasmussen, Turner, BN2}. 

\begin{proposition}
\label{prop:compare-s}
For a knot $K$, $s_0(K) = s_2(K) \leq s_{\bF}(L)$, where $s_{\bF}(K)$ is the Rasmussen $s$-invariant over $\bF$. 
\end{proposition}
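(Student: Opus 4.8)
The plan is to relate the mixed filtration $\gr_t$ at the endpoints $t=0,2$ to the ordinary quantum filtration, and then invoke the standard characterization of the Rasmussen invariant. First I would observe that at $t=0$ the grading simplifies to $\gr_0(U^m V^n g) = \gr_q(g) - 2n$, so the $U$-power is ``free'' while the $V$-power still costs $2$ per unit. Because the differential is non-decreasing with respect to $\gr_0$ (Corollary \ref{cor:d is non decreasing}) and also non-decreasing in the $\gr_V$-grading, every cycle's $\gr_0$-grading is bounded above by its quantum grading. This gives, for any nontorsion class $\zeta \in \Kh(K)/T(K)$, the inequality $\gr_0(\zeta) \leq \gr_q(\zeta)$, where $\gr_q(\zeta)$ denotes the induced quantum filtration grading. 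Taking the supremum over nontorsion classes and subtracting $1$ yields $s_0(K) \leq s_{\bF}(K)$, provided we identify $s_{\bF}(K)$ with $\gr_q(\Kh(K)/T(K)) - 1$ — i.e., with the maximal quantum filtration level of a nontorsion homology class. The equality $s_0(K) = s_2(K)$ is already immediate from Proposition \ref{prop:symmetry} with $t=0$.

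The key supporting fact I would need is that the Rasmussen $s$-invariant over $\bF$ can be computed from the $U(1)\times U(1)$-equivariant complex by measuring quantum filtration levels of nontorsion classes. Concretely, setting $V = 0$ (or $U=0$) in $A = R[X]/((X-U)(X-V))$ recovers a graded version of Bar-Natan's complex $\F_6$, and further quotienting gives Lee's/Bar-Natan's deformation; the Rasmussen invariant is the quantum filtration grading (shifted by $1$) of the surviving generator. Alternatively, and perhaps more cleanly, one uses the divisibility-type results of Sano \cite{Sano-divisibility}: the image of a nontorsion class in the specialization to the Rasmussen-theory ground ring detects $s_{\bF}$. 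I would cite the appropriate statement (either from Sano or from the standard Lee/Bar-Natan framework) to justify that the quantum filtration grading of the best nontorsion class in $\Kh(K)/T(K)$, minus $1$, equals $s_{\bF}(K)$; then the inequality $\gr_0 \leq \gr_q$ on each such class finishes the proof.

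The main obstacle I anticipate is the precise bookkeeping in the claim ``$\gr_0(\zeta) \le \gr_q(\zeta)$ for nontorsion $\zeta$.'' The subtlety is that $\gr_0$ and the induced quantum filtration are both defined as suprema over representatives/summands, and one must be careful that passing to the quotient by torsion does not disturb the comparison. The clean way is: for a cycle $z$ representing a nontorsion class, every homogeneous summand $U^m V^n g$ of $z$ has $\gr_0(U^m V^n g) = \gr_q(U^m V^n g) + 2n \geq \gr_q(U^m V^n g)$ is \emph{false} in the wrong direction — rather $\gr_0(U^m V^n g) = \gr_q(g) - 2n$ while $\gr_q(U^m V^n g) = \gr_q(g) - 2(m+n)$, so in fact $\gr_0(U^m V^n g) = \gr_q(U^m V^n g) + 2m \geq \gr_q(U^m V^n g)$. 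Thus $\gr_0(z) = \min_{\text{summands}} \gr_0 \geq \min_{\text{summands}} \gr_q = \gr_q(z)$, giving the inequality in the direction one expects to be wrong; one must then check that this does not contradict the stated $s_0 \le s_{\bF}$. The resolution is that the quantum filtration grading of a \emph{nontorsion} homology class, after quotienting by $T$, is itself bounded above by $s_{\bF}+1$ by a separate argument (this is where Sano's divisibility results enter, controlling how far a nontorsion class can be pushed up in quantum grading modulo torsion), and the $U$-power contributions that raise $\gr_0$ above $\gr_q$ are precisely absorbed by torsion. So the careful statement is $s_0(K) = \gr_0(\Kh(K)/T(K)) - 1$ and one shows directly that the optimal representative can be taken with no $U$-powers, reducing $\gr_0$ to $\gr_q$ on that representative, which is then $\le s_{\bF}(K)+1$. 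I would structure the proof around exhibiting such an optimal representative rather than around a naive summand-wise inequality.
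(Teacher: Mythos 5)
Your proposal correctly identifies the crux of the matter — the naive summand-wise comparison between $\gr_0$ and $\gr_q$ runs backwards, since $\gr_0(U^mV^n g) = \gr_q(U^mV^n g) + 2m \geq \gr_q(U^mV^n g)$ — but your proposed resolution leaves a genuine gap that the paper closes with a spectral-sequence argument you do not reproduce.

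Concretely, you claim the way forward is to ``exhibit such an optimal representative'' with no $U$-powers, on which $\gr_0$ coincides with $\gr_q$, and assert that the $U$-power contributions raising $\gr_0$ above $\gr_q$ are ``precisely absorbed by torsion.'' Neither of these statements is proved or even sketched; it is not at all clear why a $\gr_0$-maximizing nontorsion class should admit a cycle representative supported entirely at $\gr_U=0$. A cycle in $\CKh(D)$ can perfectly well have some summands with $U$-power zero and others with positive $U$-power, and the claim that you can strip the latter off without leaving the homology class (or the nontorsion quotient) needs an actual argument. The paper instead filters $\CKh(D)$ by $\gr_U$ and examines the associated spectral sequence: the $E_1$ page is the Bar-Natan homology over $\bF[V,X]/(X^2-VX)$, whose maximal $\gr_0$-grading on a nontorsion class is exactly $s_\bF(K)+1$; then, because each subsequent page is a subquotient of the previous one and the complex is bounded so the spectral sequence converges, the maximal $\gr_0$-grading of a nontorsion class only decreases, giving $\gr_0(D) \leq s_\bF(K)+1$. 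That monotonicity through the pages of the spectral sequence is precisely what makes rigorous the intuition that ``multiplication by $U$ doesn't help''; your proposal gestures at the same intuition but does not supply the mechanism. Your identification $s_0 = s_2$ via Proposition \ref{prop:symmetry} is fine, and your background observation that setting $U=0$ recovers a Bar-Natan-type deformation is in the right direction, but as written the proof of the inequality $s_0 \leq s_\bF$ is incomplete.
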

\begin{proof}
Let $D$ be a diagram for $K$. 
When $t = 0$, we have $\gr_0 = \gr_q - 2 \gr_V$. 
Consider the filtration spectral sequence associated to the $\gr_U$ filtration grading. The associated graded complex has homology $E_1 \cong H_{BN}(D)$, where $H_{BN}(D)$ is the Bar-Natan homology of $D$, defined via the Frobenius algebra $\bF[V,X]/(X^2 = VX)$. As multiplication by $U$ does not affect $\gr_0$ grading, $\gr_0(D)$ is equal to the maximal $\gr_0$-grading within nontorsion homology classes $\eta$ with $\gr_U(\eta) = 0$. This is precisely $s_\bF(K)+1$. 

Since the total complex is bounded in homological grading, the spectral sequence collapses eventually.  As the filtration spectral sequence is a sequence of subquotients, the maximum $\gr_0$-grading of a nontorsion class on each page is non-increasing, and the result follows. 
The case for $t=2$ is similar, or follows from Proposition \ref{prop:symmetry}. 
\end{proof}

\begin{remark}
Let $K$ be a knot. We currently do not know if $s_0(-K,-o) = -s_0(K,o)$, where $-o$ is the orientation induced on  the mirror $-K$ by the orientation $o$ on $K$; see the discussion on mirrors in Section \ref{sec:mirrors}. 
If $s_t$ behaves as a concordance homomorphism, then we would know that $s_\bF = s_0 (= s_2)$, since Proposition \ref{prop:compare-s} would tell us that $s_0(K,o) = - s_0(-K,-o)\geq -s_\bF(-K,-o) = s_\bF(K,o)$.
\end{remark}

\begin{proposition}
If $t \in \Q$ with $t = \frac{p}{q}$, then $s_t(L) \in \frac{1}{q}\Z$ for any link $L$. 
\begin{proof}
This follows from the fact that the distinguished generators $U^m V^n y\in \Kg(D)$ only have $\gr_t$-gradings valued in $\frac{1}{q}\Z$.
\end{proof}
\end{proposition}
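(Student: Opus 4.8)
The plan is to separate the statement into a purely arithmetic observation about the $\gr_t$-gradings of the distinguished $\bF$-basis $\Kg(D)$, and then to invoke the fact that the supremum defining $\gr_t(D)$ is actually attained by an honest chain.

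First I would record the $\gr_t$-grading of a typical basis element. For $U^m V^n y \in \Kg(D)$ with $y \in \Kg_0(D)$, formula \eqref{eq:grt grading} gives $\gr_t(U^m V^n y) = \gr_q(y) - (t m + (2-t) n)$. Here $\gr_q(y) \in \Z$ by \eqref{eq:grq chain complex}, and writing $t = p/q$ we have $2 - t = (2q - p)/q$, so that $t m + (2 - t) n = \tfrac1q\bigl(pm + (2q - p)n\bigr) \in \tfrac1q\Z$. Hence $\gr_t(z) \in \tfrac1q\Z$ for every $z \in \Kg(D)$. Since, for an arbitrary nonzero chain $w \in \CKh(D)$, the induced filtration grading $\gr_t(w) = \gr_{F^t}(w)$ equals the minimum of $\gr_t$ over the homogeneous $\Kg(D)$-summands of $w$, it follows that $\gr_t(w) \in \tfrac1q\Z$ for every nonzero $w \in \CKh(D)$.

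It then remains to note that $\gr_t(D)$ is of this form. Fixing a diagram $D$ of $L$, the discussion following the definition of $s_t$ — which combines Lemma \ref{lem:filtrations} (applied to the subquotient $\Kh(D)/T(D)$ of $\CKh(D)$ when $t \in (0,2)$), Lemma \ref{lem:grt defines a filtration}, and Lemma \ref{lem:grading at 0,2} (for $t \in \{0,2\}$) — guarantees that $\gr_t(D) = \gr_t(z)$ for some cycle $z \in \CKh(D)$ whose class in $\Kh(D)/T(D)$ is nonzero. By the previous paragraph $\gr_t(z) \in \tfrac1q\Z$, hence $\gr_t(D) \in \tfrac1q\Z$, and therefore $s_t(D) = \gr_t(D) - 1 \in \tfrac1q\Z$; since $s_t(L)$ does not depend on the diagram, the claim follows.

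I do not anticipate a genuine obstacle: the only point requiring care is that $\gr_t(D)$ is a maximum rather than a bare supremum, and this is exactly what the discreteness statements in Lemma \ref{lem:grt defines a filtration} (for $t \in (0,2)$) and Lemma \ref{lem:grading at 0,2} (for $t \in \{0,2\}$) provide; everything else is the arithmetic of the single identity \eqref{eq:grt grading}.
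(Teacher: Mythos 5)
Your proof is correct and follows essentially the same route as the paper's one-line argument, just spelled out in full: you verify that each $U^mV^n y \in \Kg(D)$ has $\gr_t$-grading in $\tfrac1q\Z$, observe that the filtration grading of any chain is a minimum over these, and then invoke the discussion after the definition of $s_t$ (Lemmas \ref{lem:filtrations}, \ref{lem:grt defines a filtration}, \ref{lem:grading at 0,2}) to ensure $\gr_t(D)$ is attained by an honest cycle. This matches the paper's intent exactly; the extra care you take in distinguishing the $t\in(0,2)$ and $t\in\{0,2\}$ cases is a harmless refinement, not a genuine departure.
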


\begin{proposition}
For knots $K_1$ and $K_2$, $\sat(K_1 \coprod K_2) = \sat(K_1) + \sat(K_2) +1$.
\begin{proof}
Fix $t$.
Pick cycles $\eta_i \in \CKh(K_i)$ such that $\gr_t(\eta_i) = \sat(K_i)+1$, i.e.\ $\eta_i$ maximizes $\gr_t$ inside its homology class $[\eta_i]$, and $[\eta_i]$ maximizes $\gr_t$ across all homology classes. 
Since the differential of $\CKh(K_1 \coprod K_2)$ does not interact with the tensor product, $\gr_t(\eta_1 \otimes \eta_2)$ also maximizes $\gr_t$ in its homology class. Thus $\gr_t(\eta_1 \otimes \eta_2) = \gr_t(\eta_1) + \gr_t(\eta_2)$, which after the normalization gives the desired formula in terms of $\sat$. 
\end{proof}
\end{proposition}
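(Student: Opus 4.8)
The statement to prove is that for knots $K_1$ and $K_2$, $s_t(K_1 \coprod K_2) = s_t(K_1) + s_t(K_2) + 1$.

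The plan is to exploit the fact that the Khovanov chain complex of a disjoint union splits as a tensor product, both on the nose as complexes and with respect to all the relevant gradings. First I would fix $t \in [0,2]$ and diagrams $D_1, D_2$ for $K_1, K_2$, placed in disjoint regions of the plane, so that $D = D_1 \coprod D_2$ is a diagram for $K_1 \coprod K_2$. Then $\CKh(D) \cong \CKh(D_1) \otimes_{\bF[U,V]} \CKh(D_2)$ as complexes of $\bF[U,V]$-modules (no crossing connects $D_1$ to $D_2$, so the cube of resolutions and its differential factor), and under this identification the Khovanov generators satisfy $\Kg_0(D) = \{g_1 \otimes g_2 : g_i \in \Kg_0(D_i)\}$, with $\gr_q(g_1 \otimes g_2) = \gr_q(g_1) + \gr_q(g_2)$ and $\gr_h$ additive as well. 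Consequently $\gr_t$ is additive on the distinguished $\bF$-basis $\Kg(D)$: $\gr_t(U^{m_1}V^{n_1}g_1 \otimes U^{m_2}V^{n_2}g_2) = \gr_t(U^{m_1}V^{n_1}g_1) + \gr_t(U^{m_2}V^{n_2}g_2)$.

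Next I would identify where the shift by $+1$ comes from: it is purely the normalization $s_t = \gr_t(-) - 1$ applied three times, once on each side and once on the union, giving $s_t(K_1 \coprod K_2) + 1 = \gr_t(D) = \gr_t(D_1) + \gr_t(D_2) = (s_t(K_1) + 1) + (s_t(K_2) + 1)$, hence $s_t(K_1 \coprod K_2) = s_t(K_1) + s_t(K_2) + 1$ after rearranging. So the real content is the claim $\gr_t(D) = \gr_t(D_1) + \gr_t(D_2)$, i.e. that the maximal $\gr_t$-grading of a nontorsion class in $\Kh(D)/T(D)$ is the sum of the corresponding quantities for the two factors. For the inequality $\gr_t(D) \geq \gr_t(D_1) + \gr_t(D_2)$: using Lemma \ref{lem:filtrations}, Lemma \ref{lem:grt defines a filtration} and Lemma \ref{lem:grading at 0,2}, pick cycles $\eta_i \in \CKh(D_i)$ whose homology classes are nontorsion and achieve $\gr_t(\eta_i) = \gr_t(D_i)$, with $\eta_i$ chosen to maximize $\gr_t$ within its own class. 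Then $\eta_1 \otimes \eta_2$ is a cycle, its class is nontorsion (for instance since localizing at $V-U$ and invoking Proposition \ref{prop:inverting D rank} the class $[\eta_1 \otimes \eta_2]$ maps to a nonzero element of the free module $\KhD(D) \cong \KhD(D_1) \otimes \KhD(D_2)$, the tensor of two nonzero classes over the domain $R_\D$), and by additivity of $\gr_t$ on generators plus the fact that the filtration grading of a sum is the min over summands, one checks $\gr_t(\eta_1 \otimes \eta_2) = \gr_t(\eta_1) + \gr_t(\eta_2)$ within its class. For the reverse inequality $\gr_t(D) \leq \gr_t(D_1) + \gr_t(D_2)$: take a nontorsion cycle $z \in \CKh(D)$ with $\gr_t(z) = \gr_t(D)$; the hard part is to produce from $z$ a nontorsion class on one factor whose grading controls things, which I would handle by again passing to the localized (Lee) theory where $\KhD(D)$ has an explicit basis of canonical Lee generators $\s_{o_1 \sqcup o_2} = \s_{o_1} \otimes \s_{o_2}$ indexed by pairs of orientations, and the $\gr_t$-filtration on $\KhD$ is the tensor product of the filtrations on the factors, so the maximal grading of a nonzero class is at most the sum of the maximal gradings on the factors.

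The main obstacle I anticipate is being careful about the interaction between the tensor product, the torsion submodule, and the filtration grading on the quotient — in particular justifying that $\gr_t$ on $\Kh(D)/T(D)$ really is additive and that one may compute it via the localized theory. The cleanest route is probably to reduce everything to $\KhD$ via Corollary \ref{cor:only c torsion} (so that $\Kh(D)/T(D) \hookrightarrow \KhD(D)$ with the filtration preserved), establish $\KhD(D) \cong \KhD(D_1) \otimes_{R_\D} \KhD(D_2)$ as filtered $R_\D$-modules with the product filtration, and then invoke the elementary fact that for finitely generated filtered modules over the domain $R_\D$ with discrete filtrations the top filtration grading is additive under tensor product; one must still verify the top grading is realized by a $\gr_t$-homogeneous generator (using Lemma \ref{lem:grading at 0,2} at $t = 0, 2$ and Lemma \ref{lem:filtrations} for $t \in (0,2)$). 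Once that bookkeeping is done the proof is exactly the two-sided inequality sketched above followed by the normalization rearrangement.
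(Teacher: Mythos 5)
Your proposal takes essentially the same route as the paper: exploit the tensor-product decomposition $\CKh(D_1 \sqcup D_2) \cong \CKh(D_1) \otimes_R \CKh(D_2)$, additivity of $\gr_t$ on the distinguished $\bF$-basis, and the observation that the $+1$ is pure bookkeeping from the normalization $s_t = \gr_t(D) - 1$. You are considerably more explicit than the paper's very terse proof, correctly isolating the content as the equality $\gr_t(D) = \gr_t(D_1) + \gr_t(D_2)$ and separating the two inequalities. Your lower-bound argument is complete and in fact fills in something the paper leaves implicit: tensoring the chain-level maximizers and verifying nontorsionness via the Künneth isomorphism $\KhD(D) \cong \KhD(D_1) \otimes_{R_\D} \KhD(D_2)$ (which holds because $\KhD(D_i)$ is free over $R_\D$ by Proposition \ref{prop:inverting D rank}).

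Your sketch of the reverse inequality, though, rests on the phrase ``the $\gr_t$-filtration on $\KhD$ is the tensor product of the filtrations on the factors,'' which does not quite parse. The $\gr_t$-filtration of Definition \ref{def:F^t filtration} does not extend to $\CKhD(D)$: since multiplication by $V-U$ strictly decreases $\gr_t$, multiplying by $(V-U)^{-1}$ would have to increase it, and the would-be filtration on the localized complex fails condition \eqref{item:filt is zero} of Definition \ref{def:filtration}. The filtration you actually want lives on $\Kh(D)/T(D)$ (induced from $\CKh(D)$), and Corollary \ref{cor:only c torsion} gives an injection $\Kh(D)/T(D) \hookrightarrow \KhD(D)$ — but one still needs to argue that any cycle in $\CKh(D)$ of $\gr_t$-grading strictly larger than $\gr_t(D_1) + \gr_t(D_2)$ represents a torsion class, and this does not follow formally from the tensor decomposition because the differential $d_1 \otimes 1 \pm 1 \otimes d_2$ mixes the factors. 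To be fair, the paper's own proof glosses over the same point: it verifies only that $\eta_1 \otimes \eta_2$ maximizes $\gr_t$ within its own homology class, not that $[\eta_1 \otimes \eta_2]$ is the global maximizer among nontorsion classes. So your proof matches the paper's approach, is more careful on the lower bound, and leaves the same upper-bound step underjustified — an instance where spelling out the argument exposes work still to be done.
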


\begin{proposition} For knots $K_1$ and $K_2$, 
$| \sat(K_1 \# K_2) - (\sat(K_1) + \sat(K_2) + 1) | \leq 1$. 
\begin{proof}
Let $C \: K_1 \coprod K_2 \to K_1 \# K_2$ be the saddle cobordism that realizes the connected sum operation. Since $C$ is connected, Corollary \ref{cor:induced map is injective mod torsion} implies that the induced map on homology $C_* \: \Kh(K_1 \coprod K_2) \to \Kh(K_1 \# K_2)$ is injective modulo torsion. Since $\chi(C) = -1$, we have $\sat(K_1 \# K_2) \geq \sat(K_1 \coprod K_2)-1$. Considering the reverse cobordism $\b{C} \: K_1 \# K_2 \to K_1 \coprod K_2$, we find that $\sat(K_1 \coprod K_2) \geq \sat(K_1 \# K_2)-1$. Combining these, we have the desired inequality.
\end{proof}
\end{proposition}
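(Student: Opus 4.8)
The plan is to bound $\sat(K_1 \# K_2)$ from both sides by comparing it to $\sat(K_1 \coprod K_2)$ via the saddle cobordism realizing the connected sum, exactly as in the pattern already used for Theorem~\ref{thm:concordance-invariant} and the preceding propositions. First I would fix $t \in [0,2]$ and invoke the computation $\sat(K_1 \coprod K_2) = \sat(K_1) + \sat(K_2) + 1$ from the previous proposition, so that it suffices to show $|\sat(K_1 \# K_2) - \sat(K_1 \coprod K_2)| \leq 1$.

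Next, let $C \colon K_1 \coprod K_2 \to K_1 \# K_2$ be the connected saddle cobordism (one $1$-handle attachment joining the two components), so $\chi(C) = -1$. Since $C$ is connected, Corollary~\ref{cor:induced map is injective mod torsion} gives that the induced map $C_* \colon \Kh(K_1 \coprod K_2) \to \Kh(K_1 \# K_2)$ is injective modulo torsion; pick a nontorsion cycle $\zeta$ in $\CKh(K_1 \coprod K_2)$ with $\gr_t(\zeta) = \gr_t(K_1 \coprod K_2)$, as guaranteed by Lemma~\ref{lem:filtrations}, Lemma~\ref{lem:grt defines a filtration}, and Lemma~\ref{lem:grading at 0,2}. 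Then $C_*[\zeta]$ is nontorsion in $\Kh(K_1 \# K_2)$, and Lemma~\ref{lem:grt filtered degree} (applied to a movie presentation of $C$, together with the Reidemeister invariance of $\gr_t$ from Proposition~\ref{prop:st is invariant}) shows $\gr_t(C_*(\zeta)) \geq \gr_t(\zeta) + \chi(C) = \gr_t(K_1 \coprod K_2) - 1$. Hence $\gr_t(K_1 \# K_2) \geq \gr_t(K_1 \coprod K_2) - 1$, i.e.\ $\sat(K_1 \# K_2) \geq \sat(K_1 \coprod K_2) - 1$.

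For the reverse inequality I would run the same argument with the reversed cobordism $\overline{C} \colon K_1 \# K_2 \to K_1 \coprod K_2$, which is again connected with $\chi(\overline{C}) = -1$: injectivity modulo torsion of $\overline{C}_*$ plus Lemma~\ref{lem:grt filtered degree} yields $\sat(K_1 \coprod K_2) \geq \sat(K_1 \# K_2) - 1$. Combining the two inequalities gives $|\sat(K_1 \# K_2) - \sat(K_1 \coprod K_2)| \leq 1$, and substituting the split-union formula completes the proof. I do not anticipate a genuine obstacle here; the only point requiring mild care is making sure $\overline{C}$ is legitimately a connected cobordism between knots so that Corollary~\ref{cor:induced map is injective mod torsion} applies in that direction too (it is, since splitting a connected sum along the appropriate sphere is a single $1$-handle whose underlying surface is connected), and that the Euler characteristic sign conventions line up so that both cobordisms contribute $-1$ rather than $+1$ to the grading shift.
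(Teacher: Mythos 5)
Your proof matches the paper's exactly: compare $\sat(K_1 \# K_2)$ and $\sat(K_1 \coprod K_2)$ via the saddle cobordism $C$ and its reverse (both connected, both with $\chi = -1$), use Corollary~\ref{cor:induced map is injective mod torsion} together with Lemma~\ref{lem:grt filtered degree} to get the two one-sided bounds, and then substitute the split-union formula from the preceding proposition. On your final caveat: the genuinely subtle point is not whether $\overline{C}$ is connected (it obviously is, being the same surface), but that $K_1 \coprod K_2$ is a two-component link while Corollary~\ref{cor:induced map is injective mod torsion} is stated only for cobordisms between \emph{knots}; both you and the paper implicitly extend the corollary to this case, which is justified by tracing back through Proposition~\ref{prop:localized-map-injective} and Corollary~\ref{cor:only c torsion}, but is worth flagging rather than being reassured by connectedness alone.
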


\subsubsection{Continuity in $t$}

Our next task is to show that $s_t(L)$ is continuous as a function of $t$. In fact, we show the following:
\begin{theorem}
\label{thm:continuity}
For a link $L$, $s_t(L) \: [0,2] \to \R$ is piecewise-linear continuous in $t$, and is differentiable at all but finitely many points.
\end{theorem}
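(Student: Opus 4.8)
The plan is to show that on each diagram $D$, the function $t \mapsto \gr_t(D)$ is realized as a minimum over a fixed finite set of affine functions of $t$, and then upgrade this to a statement for all $t \in [0,2]$ including the endpoints. First I would work on the open interval $(0,2)$, where Lemma \ref{lem:grt defines a filtration} tells us $F^t$ is discrete and admits a grading, so by Lemma \ref{lem:filtrations} we have $\gr_t(D) = \gr_t(z)$ for some cycle $z$ whose class $[z]$ is nontorsion. The key observation is that, for a fixed chain $z = \sum_i c_i\, U^{m_i}V^{n_i} g_i$ written in the $\bF$-basis $\Kg(D)$, the quantity $\gr_t(z) = \min_i \big(\gr_q(g_i) - t m_i - (2-t)n_i\big)$ is a concave, piecewise-linear function of $t$ (a minimum of affine functions). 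Taking a supremum over cycles looks like it could destroy this, so the heart of the argument is a compactness/finiteness reduction: I want to exhibit a \emph{single} finite set of generators, independent of $t$, from which the maximizing cycle can always be chosen.

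The cleanest way to get this finiteness is via the involution $I$ and the localized complex $\CKhD(D)$. Recall from Corollary \ref{cor:induced map is injective mod torsion} and the surrounding discussion that localizing at $V-U$ is exact and kills exactly $T(D)$, so a class $[z] \in \Kh(D)$ is nontorsion iff its image in $\KhD(D)$ is nonzero, and $\KhD(D)$ is free of rank $2^l$ with the canonical Lee generators $\s_o$ as a basis (Proposition \ref{prop:inverting D rank}). The idea is: choose once and for all a finite $\bF[U,V]$-generating set for the cycle submodule $Z(D) \subset \CKh(D)$ (possible since $\CKh(D)$ is a finitely generated module over the Noetherian ring $R$, hence so is $Z(D)$), say $\zeta_1, \dots, \zeta_r$. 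Then any nontorsion cycle is of the form $z = \sum_j p_j(U,V)\zeta_j$ for polynomials $p_j$, and up to adding a boundary and a torsion cycle we may assume... — here is where I need to be careful — we need that the \emph{maximum} of $\gr_t$ over nontorsion classes is achieved among the $\zeta_j$ themselves, up to multiplication by monomials $U^aV^b$ which only \emph{decrease} $\gr_t$. Concretely: if $[z]$ is nontorsion with $\gr_t(z)$ maximal, then expanding $z$ in the $\zeta_j$ and noting each monomial multiple $U^aV^b\zeta_j$ has $\gr_t \le \gr_t(\zeta_j) - \min(ta + (2-t)b)$, the maximum $\gr_t$-grading of a nontorsion class is bounded above by $\max_j \gr_t(\zeta_j)$; and this bound is attained because some $\zeta_j$ itself must be nontorsion (a generating set of the cycle module surjects onto $\Kh(D)$, which is not all torsion). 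Hence
\[
\gr_t(D) - 1 = s_t(D) = \max\{\gr_t(\zeta_j) - 1 : \zeta_j \text{ nontorsion}\},
\]
a maximum over finitely many functions, each of the form $t \mapsto \min_i(\gr_q(g_i) - tm_i - (2-t)n_i)$, i.e.\ each concave and piecewise-linear. A maximum of finitely many continuous piecewise-linear functions is continuous and piecewise-linear, with finitely many non-differentiability points, which gives the theorem on $(0,2)$.

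For the endpoints $t=0,2$, I would invoke Lemma \ref{lem:grading at 0,2}: even though $F^0, F^2$ fail property \ref{item:F2}, the set of achievable gradings is discrete (contained in $\Z$), so $\gr_0(D)$ and $\gr_2(D)$ are still realized by actual cycles, and the same finite-generating-set bound applies verbatim (the functions $\gr_t(\zeta_j)$ extend continuously and affinely-piecewise to the closed interval $[0,2]$). Thus continuity and piecewise-linearity hold on all of $[0,2]$, and the finitely many breakpoints of the finitely many functions $\gr_t(\zeta_j)$ account for all points of non-differentiability. The main obstacle I anticipate is justifying rigorously that the maximizing cycle can be taken from the fixed finite list $\{\zeta_j\}$ uniformly in $t$ — in particular ruling out that some $t$-dependent $\bF$-linear (not $R$-linear) combination of the $U^aV^b\zeta_j$ could have strictly larger $\gr_t$ than any single $\zeta_j$; this requires the concavity/monomial-degradation bookkeeping above together with the fact (from Lemma \ref{lem:filtrations}(4) applied to $\Kh(D)/T(D)$) that the supremum defining $\gr_t(D)$ is attained, so that one genuinely works with a single cycle rather than a limit.
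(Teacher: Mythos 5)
There is a genuine gap in your proof, and it is located exactly where you yourself flagged a worry. The claim that ``the maximum $\gr_t$-grading of a nontorsion class is bounded above by $\max_j \gr_t(\zeta_j)$'' is false in general. Writing a cycle $z=\sum_j p_j(U,V)\zeta_j$ and expanding each $U^aV^b\zeta_j$ in the distinguished $\bF$-basis $\Kg(D)$, the $\bF$-linear combination can exhibit \emph{cancellation} among the lowest-$\gr_t$ terms, and cancellation can only \emph{raise} $\gr_t(z)$. Concretely: if $\zeta_1 = w + w'$ with $\gr_t(w) > \gr_t(w')$ for all $t$, and if $w'$ happens also to be a cycle (torsion or a boundary), then $\zeta_1 - w' = w$ represents the same nontorsion class as $\zeta_1$ but has $\gr_t(w) > \gr_t(\zeta_1)$, possibly exceeding $\max_j\gr_t(\zeta_j)$. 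Since a Noetherian-generating-set argument gives you no control over which lattice points the $\zeta_j$ occupy, you cannot rule this out. The ``concavity/monomial-degradation bookkeeping'' you appeal to addresses only the harmless effect of multiplying a single $\zeta_j$ by a monomial; it says nothing about the $\bF$-linear cancellations that are the real danger. So the reduction to a maximum over a fixed finite set of concave piecewise-linear functions, which is the engine of your whole argument, does not go through.

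This is exactly the difficulty the paper's proof confronts head-on: for $\tau\in(0,2)$ it uses a local argument, exploiting that the boundary plane $\partial F^\tau_\star$ meets only finitely many occupied lattice points and that for nearby $t$ the optimal plane passes through one of them; and near $t=0$ (and $t=2$ by symmetry) it runs an explicit replacement scheme $x\rightsquigarrow y \rightsquigarrow y'\rightsquigarrow\cdots$, showing that each time the maximizing cycle must change, the measured lattice point moves strictly inward along a bounded direction, so the process terminates in finitely many steps. Your approach, if it could be made to work, would be cleaner and more global than the paper's, but as written it silently assumes that the $t$-optimal representative can always be chosen from a $t$-independent finite list, which is precisely what needs proof.
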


We make the following nonstandard definitions in order to streamline the proof. First, observe that the tri-grading $( \gr_U, \gr_V,\gr_q)$ of a homogeneous generator $U^mV^n g \in \Kg(D)$ lies in the lattice $\Z^3$ in  $( \gr_U, \gr_V,\gr_q)$-space.
\begin{itemize}

    \item A lattice point $\bp = (m,n,q) \in \Z^3$ is \emph{occupied by a chain $x \in \CKh(D)$} if $x$ has a homogeneous summand (with nonzero coefficient) $U^mV^ng$ with $\gr_q(g) = q$. Note that a chain occupies finitely many lattice points.  Sometimes we will abuse notation and write $\gr_t(m,n,q)$ to mean the $\gr_t$-grading of a homogeneous generator $U^mV^ng$ that has trigrading $(m,n,q)$. The function $\gr_t(m,n,q) = q - t\cdot m - (2-t)\cdot n$ is linear in $t$.
    
    \item A subset $S$ of the lattice $\Z^3$ \emph{supports} a chain $x$ if $x$ only occupies lattice points in $S$. 
    
    \item A lattice point $\bp \in \Z^3$ is \emph{occupied by $\CKh(D)$} if there is some chain $x \in \CKh(D)$ that occupies $\bp$. Note that the set points occupied by $\CKh(D)$ is bounded in $\gr_q$, and bounded from below in $\gr_U$ and $\gr_V$ (see Figure \ref{fig:filtration is discrete}). Given a chain $x$, $\gr_t(x)$ is the minimal $\gr_t$-grading among all homogeneous summands of $x$. Therefore, as a function of $t$,  $\gr_t(x)$  is piecewise-linear and continuous on $[0,2]$. 
    
    \item Given $t \in [0,2]$, a lattice point $\bp$ is \emph{measured by $\gr_t(D)$} if there is a chain $x$ representing a nontorsion homology class such that $\gr_t(x) = \gr_t(D)$ (i.e.\ the homology class of $x$ maximizes $\gr_t$ among nontorsion classes) such that $x$ occupies $\bp$. Note that this means $x$ has a homogeneous summand at $\bp$, and that the $\gr_t$ of this summand is minimal among the homogeneous summands of $x$. Also, $\gr_t(D)$ may measure multiple lattice points at once; if $t \in (0,2)$, $\gr_t(D)$ may only measure finitely many lattice points, due to the boundedness of the set of lattice points occupied by $\CKh(D)$ and the position of the level sets (planes) of the $\gr_t$ function.
    
    \item Given $t \in [0,2]$, we are most interested in the maximal filtration level containing representatives of nontorsion homology classes, so we denote this level $F^t_\star$. (In other words, $F^t_\star = F^t_{\gr_t(D)}$.) The \emph{boundary of $F^t_\star$}, denoted $\partial F^t_\star$, is the plane in $(\gr_U, \gr_V,\gr_q)$-space given by the equation 
    \[
        \gr_q - t \cdot \gr_U - (2-t) \cdot \gr_V = \gr_t(D).
    \]
    Chains in $F^t_\star$ can only occupy lattice points $(m,n,q)$ where  $q - t \cdot m - (2-t) \cdot n \geq \gr_t(D)$.
    
\end{itemize}

To help visualize the calculation of $\gr_t(D)$, we make the following observations.

\begin{itemize}

    \item Suppose that at time $t \in [0,2]$, the lattice point $\bp = (m, n,q)$ is being measured. The plane $\{\gr_t = \gr_t(\bp)\}$ is given by the dot product equation 
    \[
        (-t, -(2-t), 1) \cdot ( \gr_U - m, \gr_V - n, \gr_q - q ) =0,
    \]
    or equivalently,
    \[
        t(\gr_U - m) + (2-t)(\gr_V - n) = \gr_q - q.
    \]
    For $q'\in \Z$, the intersection of the above plane $\{\gr_t = \gr_t(\bp)\}$ and the plane $\{\gr_q = q'\}$ is the line 
    \[
       l(\bp,t,q') = \{ t(\gr_U - m) + (2-t)(\gr_V - n) = q' - q\}
    \]
    of slope $\frac{t}{t-2} \in [-\infty, 0]$
    containing the \emph{pivot point}
    \[ 
        \left ( m + \frac{q'-q}{2}, n + \frac{q'-q}{2}, q'\right ).
    \]
    Note that the pivot point is independent of $t$. 
    The collection of pivot points at each $\gr_q$ level is given by the line 
    \[
        \left ( m + \frac{\gr_q-q}{2}, n + \frac{\gr_q-q}{2}, \gr_q \right ).
    \]
    \item As $t$ increases from $0$ to $2$, the line $l(\bp,t,q')$ in the plane $\{\gr_q = q'\}$ starts out with slope $0$ and rotates clockwise through negative slopes, about the pivot point $( m + \frac{q'-q}{2}, n + \frac{q'-q}{2}, q')$. At $t = 2$, the line $l(\bp, t,q')$ is vertical. 
    
    \item The function $\gr_t(D)$ is always measuring the $\gr_t$ of some homogeneous generator $U^mV^ng \in \Kg(D)$. That is, if $\eta$ is a nontorsion homology class in $\Kh(D)$ with maximal $\gr_t$-grading, then there is some chain representative $x = \sum c_j U^{m_j}V^{n_j}g_j$ achieving $\gr_t(x) = \gr_t(\eta)$, and this value is the minimum $\gr_t$ over all the homogeneous summands $U^{m_j}V^{n_j}g_j$ of $x$. As such, $\gr_t(D)$ is piecewise-linear: each time $t$ in the domain is either a critical point (a point of nondifferentiability) or in the interior of an interval where the $\gr_t(D)$ function is linear (a time interval on which the lattice point being measured is constant). 
    
\end{itemize}

\begin{proof}[Proof of Theorem \ref{thm:continuity}]

We will show that $\gr_t(D)$ is continuous for a diagram $D$ of a link $L$. We mainly make use of the discreteness of the integer lattice in $ (\gr_U, \gr_V,\gr_q)$-space.

Suppose $\tau \in (0,2)$. 
The plane $\partial F^\tau_\star$ intersects at least one lattice point (as it is measuring some class) and at most finitely many lattice points (by boundedness of the occupied region). Label the points in this finite set $\{\bp_i\}_{0\leq i \leq k}$.
By the discreteness of the occupied lattice, for each $\bp_i$, there exists a $\delta_i >0$ such that for $t \in (\tau -\delta_i,\tau) \cup (\tau, \tau + \delta_i)$, the plane
$\{\gr_t = \gr_t(\bp_i)\}$
intersects the occupied lattice only at $\bp_i$.
Let $\delta$ be the minimum of this finite set $\{\delta_i\}$. Then $\gr_t(D)$ is piecewise-linear and continuous on $(\tau-\delta, \tau+\delta)$, possibly with a corner at $t = \tau$. 
This shows $\gr_t(D)$ can have only finitely many corners on any compact interval $[a,b] \subset (0,1)$. 

Next, we will show that $\gr_t$ is right-continuous at $t=0$; left-continuity at $t=2$ follows from symmetry (Proposition \ref{prop:symmetry}).

Choose a cycle $x$ representing a nontorsion class $[x]$ such that $\gr_0(x) = \gr_0([x]) = \gr_0(D)$. The cycle $x$ occupies finitely many lattice points $\{\mathbf{x}_i\}$. Let us first consider the boundaries of the filtration levels $F^t_{\gr_t(x)}$  measuring $x$ at times $t$; these are by definition always tangent to the convex hull of the points $\{\mathbf{x}_i\}$. 
There is some $\delta^l$ (for \emph{left}; see Figure \ref{fig:rotate-plane}) such that for $t \in (0,\delta^l)$, the filtration level $F^t_{\gr_t(x)}$   does not pick up any new lattice points; this because of the boundedness of the ``left" side of the lattice (i.e.\ the side with bounded $\gr_V$ gradings). 
There is also some $\delta^r$ (for \emph{right}, see Figure \ref{fig:rotate-plane}) and a lattice point $\bf{x}_j$ occupied by $x$ such that for all $t \in [0,\delta^r)$, $\partial F^t_{\gr_t(x)}$ is in fact always tangent to $\mathbf{x}_j$ (there may be multiple $\mathbf{x}_j$; pick any one). See Figure \ref{fig:delta r} for a schematic depiction of the choice of $\mathbf{x}_j$ and $\delta^r$.

Let $\delta = \min\{\delta^l, \delta^r\}$. If $\gr_t(D) = \gr_t(x)$ for all $t \in [0,\delta)$, then we are done, as $\gr_t(x)$ is piecewise-linear continuous with finitely many critical points. 
So, suppose that there is some cycle $y \neq x$ representing some nontorsion homology class (possibly $[x]$, or possibly some other nontorsion class) such that at some time $\tau \in (0,\delta)$, $\gr_\tau(y)$ is \emph{strictly greater than} $\gr_\tau(x)$. Let $\{\mathbf{y}_i\}$ be the finite set of lattice points occupied by $y$. 

Since $F^t_{\gr_t(x)}$ does not pick up any new lattice points on the interval $t \in [0,\tau]$,
it must be that all points $\{ \mathbf{y}_i\}$ lie in the region occupied by $F^t_{\gr_t(x)}$ for all $t \in [0,\tau]$. 
Since $\gr_0(x) = \gr_0(D)$, we must have $\gr_0(y) = \gr_0(x)$; otherwise $\gr_0(y) = \gr_0(D) > \gr_0(x)$. 
Furthermore, it must be that the lattice points $\mathbf{y}_j$ being measured by $\gr_0$ lie in the finite \emph{left} side of the line $L(\mathbf{x}_j)$ in the plane $\partial F^0_\star = \partial F^0_{\gr_0(x)}$, and in particular cannot lie on $L(\mathbf{x}_j)$ (because $\gr_\tau(y) >\gr_\tau(x)$). 

We now repeat the argument, replacing $x$ with $y$, and possibly obtaining an ``even better" cycle $y'$ representing a nontorsion homology class, and so on. This process must terminate after at most $\gr_V(\mathbf{x}_j)+1$ iterations, and we have that $\gr_t(D)$ is equal to $\gr_t(z)$ for some nontorsion cycle $z$ on an interval $[0,\delta_0)$. 

\begin{figure}[htp]
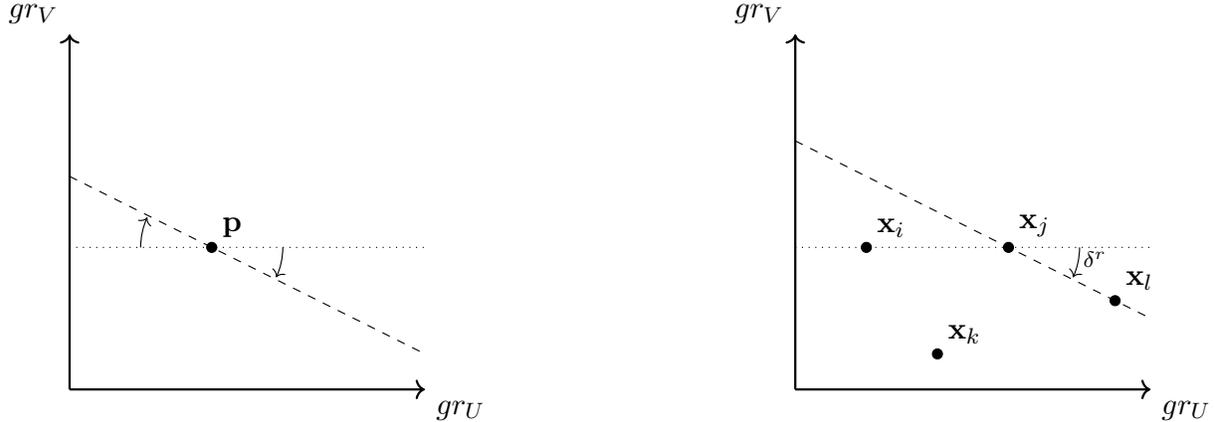

\subcaptionbox{The boundary planes of the filtration levels measuring lattice point $p$, at two different times $t = 0$ (dotted) and $t = \tau >0$ (dashed), intersect the plane $\gr_q = \gr_q(p)$ as shown. The dotted line indicates the boundary of the filtration level at $t = 0$. As $t$ increases, the boundary rotates clockwise. The wedge northwest of $\bp$ that is bounded by the dotted and dashed lines contains finitely many occupied lattice points, whereas the wedge southeast of $\bp$ is infinite. \label{fig:rotate-plane}}
{ \includegraphics[width=2.6in]{images/rotate-plane.tex}
}\hfill%
\subcaptionbox{ A two-dimensional depiction of choosing $\mathbf{x}_j$ and $\delta^r$. The distinguished cycle $x$ occupies finitely many lattice points. We can choose one, $\mathbf{x}_j$, such that, for some $\delta^r >0$ (depending on $\mathbf{x}_j$), the boundary of the filtration level $\partial F^t_{\gr_t(x)}$ is tangent to $\mathbf{x}_j$ for all $t\in [0,\delta^r)$. In particular, $x\in F^t_{\gr_t(x)}$ for all $t\in [0,\delta^r)$. \label{fig:delta r}}
{\includegraphics[width=2.6in]{images/delta_r.tex}
}%
\caption{Figures illustrating parts of the proof of Theorem \ref{thm:continuity}.}
\end{figure}

Since $\gr_t(z)$ has finitely many critical points, we now know that $\gr_t(D)$ has finitely many critical points on $[0,\delta_0)$ and $(2-\delta_0, 2]$. As $\gr_t(D)$ also has finitely many critical points on the compact interval $[\delta_0/2, 2-\delta_0/2]$, the statement follows.
\end{proof}

\subsection{Examples}

In this section, we discuss a few preliminary examples to help elucidate the potential behavior of $s_t$. At present, we have not implemented a program to compute $s_t$. Using such a program, one could investigate which types of knots have ``trivial" $s_t$, in the sense that $\frac{ds_t}{dt}$ is constant on the interval $[0,1]$. We are also interested in finding an example where $s_t$ has a change in slope at some $t^* \in (0,1)$, and exploring the topological meaning of the value of $t^*$.

\begin{example}
If $\mathcal{U}$ is the unknot, then $s_t(\mathcal{U}) = 0$. 
\end{example}

\begin{example}
Let $(K,o)$ be a positive oriented knot; let $D$ be a positive diagram of $K$ with $n$ crossings.
The complex $\CKh(D)$ is supported on homological gradings $0$ through $n$.
The oriented resolution $D_o$ is the unique resolution in the lowest supported homological grading $\gr_h = 0$.
Let $r$ denote the number of Seifert circles in the oriented resolution $D_o$. Applying Algorithm \ref{alg:ABcolors}, let $a$ denote the number of $\colora$ circles and $b$ the number of $\colorb$ circles in $D_o$; hence $r = a+b$. 

Recall that in $\CKhD(D)$, we have two canonical Lee generators $\s_o$ and $\s_{\obar}$ (see the discussion following Algorithm \ref{alg:ABcolors}). We may define lifts $\tilde \s_o$ and $\tilde \s_{\obar}$ in $\CKh(D)$ as follows.
    For $\tilde \s_o$, $\colora$-labeled circles contribute the tensor factor  $X-U$ and $\colorb$-labeled circles contribute the tensor factor $V-X$; for $\tilde \s_{\obar}$ these assignments are reversed: $\colora$ circles contribute $V-X$ and $\colorb$ circles contribute $X-U$. 
In other words, if viewed as chains in the localized complex $\CKhD(D)$,
$\tilde \s_o = (V-U)^r \s_o$ and $\tilde \s_{\obar} = (V-U)^r \s_{\obar}$.

As $D_o$ is at the lowest supported homological grading of $\CKh(D)$, there are no boundaries, so each cycle at $D_o$ is unique in its homological class. Such classes must be nontorsion. To find the class with the maximal $\gr_t$-grading, we use the fact that the localized homology $\KhD(D)$ is generated by $[\s_o]$ and $[\s_{\obar}]$. Below, we will define a cycle $\gamma$ at $\gr_h=0$ in $\CKh(D)$ whose image in $\CKhD(D)$ represents a class that lies in the $R$-span of $\{[\s_o], [\s_{\obar}]\}$. We will also see that $(V-U)$ divides $\gamma$ exactly once, and that $\gamma/(V-U)$ maximizes $\gr_t$-grading and therefore determines $s_t(K)$. 

Fix an ordering of the circles in $D_o$ so that all the $\colora$ circles appear first in the tensor product, followed by all the $\colorb$ circles. Then $\tilde \s_o = (X-U)^{\otimes a} \otimes (V-X)^{\otimes b}$ and $\tilde \s_{\obar} = I ( \tilde \s_o) = (V-X)^{\otimes a} \otimes (X-U)^{\otimes b}$.

As a function of $t$, the $\gr_t$-grading of $\tilde \s_o$ and $\tilde \s_{\obar}$ is the constant function $n-r$, given by measuring the lowest grading homogeneous summand $\pm X^{\otimes r}$. But this is not the highest $\gr_t$-grading achieved by a nontorsion class; consider the linear combination $\gamma$ given by
\[
    \gamma = 
    \begin{cases}
    \tilde \s_o - \tilde \s_{\obar} & \text{if }  a-b \equiv 0 \mod 2 \\
    \tilde \s_o + \tilde \s_{\obar} & \text{if } a-b \equiv 1 \mod 2
    \end{cases}.
\]
In each case, the term $\pm X^{\otimes r}$ in the generators cancel, leaving homogeneous summands with strictly higher quantum gradings. 

The terms in $\gamma$ can be easily described, as they come in pairs of homogeneous summands from $\tilde \s_o$ and $\tilde \s_{\obar}$.
Recall that under the chosen ordering of components of $D_o$, $\tilde \s_o$ may be written $(X-U)^{\otimes a} \otimes (V-X)^{\otimes b}$. Expanding this product as the sum of monomials (pure tensors in $1$ and $X$ with a monomial in $U$ and $V$ for the coefficient), we see that $\tilde \s_o$ is the sum of terms which we will describe below.

For $0 \leq k \leq r$, consider the monomials where $k$ circles are labeled by $1$ rather than $X$. For $0 \leq j \leq \min\{a,b\}$, we label $j$ of the $\colora$ circles with $-U$ and $k-j$ of the $\colorb$ circles $V$. The rest of the $\colora$ circles are labeled $X$ and the rest of the $\colorb$ circles are labeled $-X$.
Up to a particular permutation $\rho$ of tensor factors, we may write this summand as 
\begin{align*}
    & X^{\otimes a-j} \otimes (-U)^{\otimes j} 
    \otimes 
    (-X)^{\otimes b -(k-j)}
    \otimes V^{\otimes k-j} \\
    & = (-1)^{\otimes b-k}\cdot U^j V^{k-j}\cdot X^{\otimes a-j} \otimes 1^{\otimes j} \otimes X^{\otimes b - (k-j)} \otimes 1^{\otimes k-j}.
\end{align*}
Applying $I$ to the above summand, we obtain the corresponding summand in $\tilde \s_{\obar}$, which under the same permutation $\rho$ of tensor factors can be written 
\begin{align*}
    & (-X)^{\otimes a - j} 
    \otimes V^{\otimes j}
    \otimes X^{\otimes b-(k-j)}
    \otimes (-U)^{\otimes k-j} \\
    & = (-1)^{a+k}\cdot V^j U^{k-j} \cdot X^{\otimes a-j} \otimes 1^{\otimes j} \otimes X^{\otimes b - (k-j)} \otimes 1^{\otimes k-j}.
\end{align*}
(Note that the quantum grading of both of these summands is $-n+r+2k$.)

In both cases of the parity of $a-b$, we have that $\gamma$ is the sum of pure tensors in $X$ and $1$ with coefficients of the form $\pm( U^j V^{k-j} - V^j U^{k-j})$.
Furthermore, we see that $(V-U)$ divides $\gamma$ exactly once, for the $k=1$ summand pairs are only divisible once.

We claim that $\gr_t(\gamma/(V-U)) = \gr_t(D)$. To see this, observe that the only $R$-linear combinations of $\tilde \s_o$ and $\tilde \s_{\obar}$ (in $\CKh(D)$) that do not have an $X^{\otimes r}$ summand are multiples of $\gamma$; those that are not multiples of $\gamma$ have $\gr_t$ equal to $\gr_t(X^{\otimes r})$, which is less than $\gr_t(\gamma)$. Furthermore, $(V-U)$ divides $\gamma$ once, so $\gamma/(V-U)$ is indeed a cycle in $\CKh(D)$, and for all $t$, $\gr_t(\gamma/(V-U)) \geq \gr_t(\gamma)$. Finally, since $\gamma/(V-U)$ is unique in its homology class, we have $\gr_t(D) = \gr_t([\gamma]) = \gr_t(\gamma)$. This is equal to the quantum grading of the pure tensor $X^{\otimes r-1} \otimes 1$, for example, which is $n-r+2$. Hence $s_t(K) = n-r+1$, which is equal to the Rasmussen invariant $s_\bF(K)$.

\end{example}

While we have yet to implement a program to compute this invariant and find a concrete example where $\sat$ is nonconstant, it is possible to describe when $\sat$ could be nontrivial, and explain what a change in slopes represents, by looking at the following example. 

\begin{example}
In the complex for the standard braid-closure diagram of $T(3,-4) = \widehat{(\sigma_1\inv\sigma_2\inv)^4}$, consider the differential from the resolutions at $\gr_h = -1$ to the unique oriented resolution at $\gr_h = 0$.
The eight resolutions at $\gr_h = -1$ fall into two types: (a) those with an unoriented resolution of a $\sigma_1\inv$ crossing,  and (b) those with an unoriented resolution of a $\sigma_2\inv$ crossing. The differentials out of these resolutions are easily computed, and in particular include the following components (up to an overall sign):

\begin{enumerate}
    \item[(a)] Circle splitting due to a change of resolution of a $\sigma_1\inv$ crossing:
    \begin{equation*}
        \begin{aligned}
          \includestandalone{images/torus-knot-map1}
        \end{aligned}
    \end{equation*}

    \begin{equation}
    \label{eqn:U+Vexample}
        1 \otimes X \mapsto 
        1 \otimes X \otimes X  + X \o 1 \o X -(U+V)1 \otimes 1 \otimes X,
    \end{equation}
    \begin{equation}
    \label{eqn:UVexample}
        X \otimes 1 \mapsto 
        X \otimes X \otimes 1 -UV 1 \otimes 1 \otimes 1.
    \end{equation}
    In the above maps, tensor factors are ordered according to the innermost-to-outermost ordering on the depicted circles.
    \item[(b)] Circle splitting due to a change of resolution of a $\sigma_2\inv$ crossing:
        \begin{equation*}
        \begin{aligned}
          \includestandalone{images/torus-knot-map2}
        \end{aligned}
    \end{equation*}
    
    \begin{equation*}
        1 \otimes X \mapsto
        1 \otimes X \otimes X - UV 1 \otimes 1 \otimes 1
    \end{equation*}
    \begin{equation*}
        X \otimes 1 \mapsto 
        X \otimes X \otimes 1 + X \otimes 1 \otimes X - (U+V) X \otimes 1 \otimes 1
    \end{equation*}      
\end{enumerate} 
All chains at $\gr_h = 0$ are cycles: since all crossings are negative, $\gr_h = 0$ is the maximum homological grading in the complex. 

From Equation \ref{eqn:U+Vexample}, we see that the cycle $1 \otimes X \otimes X + X \otimes 1 \otimes X$ at $\gr_t$-grading $-9$ is homologous to the cycle $(U+V)\cdot 1 \otimes 1 \otimes X$ at $\gr_t$-grading $\min\{-7-t, -9+t\}$. 

From Equation \ref{eqn:UVexample}, we see that the cycle $X \otimes X \otimes 1$ at $\gr_t$-grading $-9$ is homologous to the cycle $UV \cdot (1 \otimes 1 \otimes 1)$ at $\gr_t$-grading $-7$.
In this example, one can compute that the cycle $1 \otimes 1 \otimes 1$ at $\gr_h=0$ and $\gr_t = -5$ is nontorsion (by showing that it is homologous to $(\tilde \s_o + \tilde \s_{\obar})/(V-U)$), so $s_t(T(3,-4)) = -6 = s(T(3, -4))$; nevertheless, the above cycles give an example of potential behavior of the invariant for a knot that is not positive or negative. 
\end{example}

%----------------------------------------------------
\section{A reduced theory}
\label{sec:reduced}

In this section we define a reduced version of  $U(1) \times U(1)$ equivariant Khovanov homology. Self-duality of the Frobenius pair $(R,A)$ and mirror images are also discussed.
We envision the reduced theory as particularly useful in the study of knots, as opposed to links in general. Throughout this section, we use $L$ to denote a link, whereas $K$ denotes a knot.

\subsection{The basepoint action}
Let $L$ be a \emph{based} link, i.e.\ marked with a basepoint $p \in L$. Taking a generic projection we obtain a link diagram $D$ of $L$ with a basepoint $p$ away from the crossings. Consider the ordinary ($U=V=0$) Khovanov complex $\CKh_0(D)$, built via the Frobenius algebra $A_0=\Z[X]/(X^2)$. There is a \emph{basepoint action} $\xi_p$ of $A_0$ on $\CKh_0(D)$; see \cite[Section 3]{Kh-patterns}. The action is given my merging a small circle labeled $X$ near $p$ with the marked component at each resolution $D_u$. The \emph{reduced Khovanov complex} $\CKhred_0(D
,p) $ is defined to be 
\[
\CKhred_0(D,p) = \im(\xi_p)\{1\},
\]
the ($\gr_q$-shifted) image of the action of $X$ via $\xi_p$. 
Equivalently, in a resolution $D_u$, let $Z_{u,p} \subset D_u$ denote the circle containing $p$. Then $\CKhred_0(D,p)$ is freely generated by the Khovanov generators in each $D_u$ which mark $Z_{u,p}$ by $X$, together with an overall upward $\gr_q$-shift of $1$.

\begin{remark}
Turner \cite{Turner} studied a reduced version of Bar-Natan homology, but to our knowledge there is not yet a construction for reduced $U(2)$-equivariant Khovanov homology. This is further evidence that expanding the ground ring to include the roots of $X^2 - E_1X + E_2$ may provide a richer algebraic environment for mining topological information. 
\end{remark}

\begin{remark}
A careful study of the basepoint action on $U(1) \times U(1)$ equivariant homology is present in \cite{Gujral-ribbon-distance}, where Gujral uses the  $U(1)\times U(1)$ theory (referred to as $\alpha$-homology therein) to give ribbon distance bounds, following work of Alishahi \cite{Alishahi}, Alishahi-Dowlin \cite{ADunknotting}, and Sarkar \cite{Sarkar}. 
\end{remark}

\subsection{The reduced equivariant chain complex} \label{sec:reduced-complex}

Let $(L,o)$ be an oriented link, $D$ a diagram for $(L,o)$, and $p$ a marked point on $D$.
The equivariant chain complex $\CKh(D)$ has an action by $A$ similar to the basepoint action above: define an endomorphism $f_p$ of $\CKh(D)$ to be multiplication by $X-U$ on the tensor factor of $\F(D_u)$ corresponding to the marked circle $Z_{u,p}$, and identity on all other tensor factors. In other words, the action $f_p$ merges a circle labeled $X-U$ into the marked circle. 

\begin{definition}
The \emph{reduced complex} $\CKhred(D,p)$ of a based knot diagram $(D,p)$ is defined to be $\CKhred(D,p) = \im(f_p) \{1\}$. Its homology is denoted $\Khred(D,p)$.
\end{definition}

Both $\{1, X-U\}$ and $\{1, X-V\}$ form a basis for $A$. We have
\begin{align}
\label{eq:mult and comult on X-U, X-V}
    \begin{aligned}
   (X-U) (X-U) &= (V-U)(X-U) & \qquad &  \Delta(X-U) = (X-U) \o (X-U)\\
    (X-V) (X-V) &= (U-V) (X-V) & \qquad & \Delta(X-V) = (X-V) \o (X-V).
    \end{aligned}
\end{align}

It follows that the $R$-submodule of $\CKh(D,p)$ that is freely generated by the Khovanov generators for which the marked circle $Z_{u,p}$ in each resolution is labeled by $X-U$ forms a subcomplex of $\CKh(D)$, which (after a $\grq$-grading shift) is equal to $\CKhred(D,p)$. 

Note that symmetry between $U$ and $V$ is broken by declaring the action of $f_p$ to be multiplication by $X-U$. Let $f_p'$ be the endomorphism of $\CKh(D)$ given by multiplication by $X-V$ near the basepoint, and let 
$\CKhred'(D,p) = \im(f_p')\{1\}$. Recall that the algebra involution $\iota$ from Equation \eqref{eq:iota involution} interchanges $X-U$ and $V-X$, so the corresponding involution $I$ of $\CKh(D)$ induces an isomorphism $\CKhred(D,p) \rar{\sim} \CKhred'(D,p)$. 
\begin{proposition}
\label{prop:reduced invariance}
Let $(L, p)$ be a based knot with a based diagram $(D,p)$. The chain homotopy type of $\Khred(D,p)$ is an invariant of $(L,p)$. 
\begin{proof}
The argument is given in \cite[Section 3]{Kh-patterns} (see also \cite[Lemma 2.2]{BLS}), but we repeat it here. Any two based diagrams of $(L,p)$ are related by a sequence of Reidemeister moves away from the marked points and a move which slides the basepoint past a crossing. This second basepoint sliding move can be realized as a sequence of Reidemeister moves away from basepoints, followed by the global move shown in Figure \ref{fig:pt at infinity}, and finally followed by another sequence of Reidemeister moves away from basepoints. See Figure \ref{fig:around-town} for a schematic.

It is straightforward to see that the chain homotopy equivalences for these Reidemeister moves preserve the respective reduced complexes. For the global move, use the obvious identification of circles in each resolution before and after the move; the associated isomorphism of chain complexes clearly preserves the respective reduced complexes.
\end{proof}
\end{proposition}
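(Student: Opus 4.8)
The plan is to follow the standard strategy for basepoint-decorated theories (as in \cite{Kh-patterns,BLS}): reduce invariance to a short list of moves and show each one preserves the reduced subcomplex. First I would invoke the movie decomposition: any two based diagrams of $(L,p)$ are connected by a finite sequence of (a) Reidemeister moves supported in a disk disjoint from $p$, together with (b) elementary moves that drag the basepoint past a single crossing. It then suffices to treat (a) and (b) separately. A preliminary point to nail down is that the basepoint endomorphism $f_p$, multiplication by $X-U$ on the marked tensor factor, is genuinely a chain map; this is an edge-by-edge check in the cube of resolutions using the relations $(X-U)(X-U)=(V-U)(X-U)$ and $\Delta(X-U)=(X-U)\otimes(X-U)$ from \eqref{eq:mult and comult on X-U, X-V}, together with compatibility of merge and split maps with multiplication on the marked circle.

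For (a): by Theorem~\ref{thm:[[D]] is invariant} applied through the TQFT $\F$, a Reidemeister move away from $p$ induces a chain homotopy equivalence $\varphi\colon\CKh(D)\to\CKh(D')$ with homotopy inverse $\psi$ and homotopies $h,h'$, all assembled from cobordisms supported away from the marked circle. Consequently $\varphi,\psi,h,h'$ all commute with $f_p$ (and with $f_p'$), which forces $\varphi(\im f_p)\subseteq\im f_p$, and similarly for $\psi$ and $h$; after accounting for the common $\grq$-shift $\{1\}$, $\varphi$ restricts to a chain homotopy equivalence $\CKhred(D,p)\to\CKhred(D',p)$.

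For (b): the difficulty is that sliding $p$ past a crossing is not supported away from $p$, so (a) does not apply directly. The remedy is to trade the local picture near $p$ for the one near the point at infinity: viewing $D$ on $S^2$, realize the basepoint slide as a composite of Reidemeister moves away from $p$ (handled by (a)), the global move of Figure~\ref{fig:pt at infinity} (see also Figure~\ref{fig:around-town}) which swaps the region containing $p$ with the unbounded region, and further Reidemeister moves away from $p$. The global move alters no individual resolution $D_u$ except by an ambient rearrangement, so there is a canonical bijection of circles before and after preserving $Z_{u,p}$; the induced map is an honest isomorphism of chain complexes carrying generators that mark $Z_{u,p}$ by $X-U$ to the corresponding ones, hence restricting to an isomorphism of reduced subcomplexes. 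I expect step (b) to be the main obstacle: one must be careful that the basepoint-sliding move really does decompose this way and that the global move is compatible with $\F$ on the nose (an isomorphism, not merely a homotopy equivalence); the remaining work — the edge-by-edge verification that $f_p$ commutes with $d$, and the $\{1\}$-shift bookkeeping — is routine.
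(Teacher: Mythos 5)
Your proposal follows essentially the same route as the paper: reduce to Reidemeister moves away from the basepoint plus the basepoint-slide, decompose the slide via the global move of Figure~\ref{fig:pt at infinity}, and check each piece preserves the reduced subcomplex. The only difference is cosmetic — you spell out the commutation of the Reidemeister cobordism maps with $f_p$ (which the paper summarizes as ``straightforward to see'') and re-derive that $f_p$ is a chain map, a fact the paper already establishes in Section~\ref{sec:reduced-complex} from the identities in \eqref{eq:mult and comult on X-U, X-V}.
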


\begin{figure}
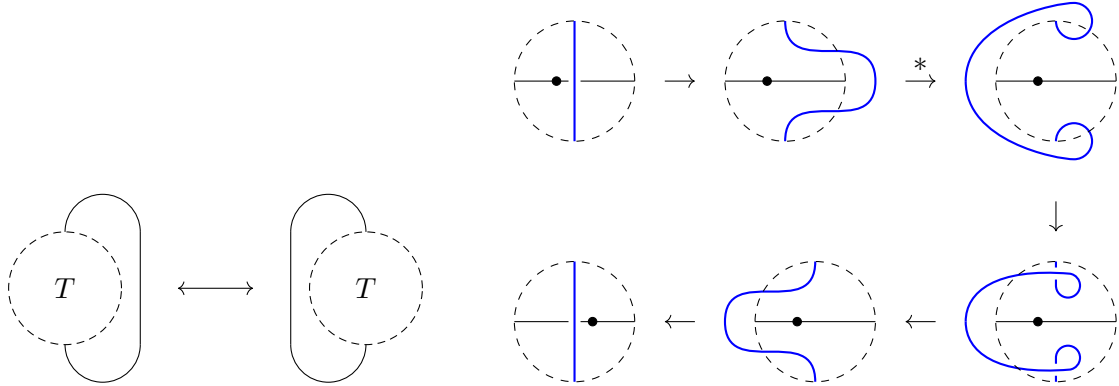

\subcaptionbox{The global move, where $T$ is an oriented, based $(1,1)$-tangle. \label{fig:pt at infinity}}[.35\linewidth]{  \includestandalone{images/ptatinfty}}
\subcaptionbox{To move the basepoint past a crossing, when the basepoint is located on the under-strand, pull the the blue strand out of the page and above the rest of the diagram. The arrow marked $*$ is the global move. For the over-strand case, perform the same move, except with the blue strand pulled into the page, below the rest of the diagram.  \label{fig:around-town}}[.6\linewidth]{
    \includestandalone{images/around-town}
   }
   \caption{A depiction of the chain homotopy equivalence assigned to moving the basepoint past a crossing.}
\end{figure}

\subsection{Based knots and connected cobordisms}
\label{sec:decorated-cobord}

We now restrict ourselves to the case of based knots and discuss maps on their reduced homology coming from connected cobordisms between them. We will view cobordisms in $\R^3 \times [0,1]$ as movies where each frame is a slice $\R^3 \times \{t\}$ at time $t$.

Let $(K_i, p_i)$ for $i=0,1$ be based knots in $\R^3$ with respective diagrams $D_i$. Consider an oriented, connected cobordism $F \subset \R^3 \times [0,1]$ from $K_0$ to $K_1$ (i.e.\ $K_i = F \cap (\R^3 \times \{i\})$ for $i=0,1$). Choose an embedded arc $\Gamma$ from $p_0 \in K_0$ to $p_1 \in K_1$; this is possible because $F$ is connected. We call the pair $(F,\Gamma)$ a \emph{decorated cobordism} from $K_0$ to $K_1$.
We claim that it is always possible to isotope $F$, fixing its boundary, such that the following conditions are met. Let $\pi: \R^3 \times [0,1] \onto \R^2$ be the diagram projection map onto the first two coordinates of $\R^3$.
\begin{enumerate}[label= (DC-\arabic*)]
    \item The path $\Gamma$ is transverse to every slice $\R^3 \times \{t\}$ for $t \in [0,1]$. \label{item:Gamma-transverse}
    \item The height function $h_F$ on $F$ given by projection to the time coordinate is Morse, and the critical values (times when $h_F$ has a critical point) are isolated. \label{item:height-morse}
    \item For all but finitely many times $t$, the projection $\pi(F \cap (\R^3 \times \{t\}))$ is a link diagram (i.e.\ an immersion with isolated double points only). \label{item:projection-generic}
    \item For all but finitely many times $t$, the projection of the basepoint $\pi(\Gamma(t))$ is disjoint from the set of singular points of the projection under $\pi$ of $F \cap (\R^3 \times \{t\})$ to the plane $\R^2$, and from the critical points of $h_F$.  \label{item:good-basepoints}
\end{enumerate}
We briefly explain how to arrange this. 
Given any decorated connected cobordism $(F, \Gamma)$ from $K_0$ to $K_1$, we can perform an ambient isotopy that transforms $\Gamma$ into the ``straight" path from $p_0$ to $p_1$ (that is, the path is given by the point $p_0$ crossed with the interval). This is possible because $\Gamma$ is codimension 3 in $\R^3 \times [0,1]$. This achieves \ref{item:Gamma-transverse}. 
By a standard argument, \ref{item:height-morse} and \ref{item:projection-generic} can be achieved by perturbations. Since transversality is an open condition, our new $\Gamma$ may retain its transversality to the time slices, maintaining \ref{item:Gamma-transverse}.
To arrange \ref{item:good-basepoints}, observe that there are three types of ``diagrammatic" special points lying on the surface $F$: 
\begin{enumerate}
    \item the Morse singular points, corresponding to the isolated critical points of \ref{item:height-morse}, \label{item:special-morse}
    \item the Reidemeister singular points, corresponding to the isolated critical points of \ref{item:projection-generic}, \label{item:special-reid}
    \item and the diagram crossings, which comprise a 1-dimensional submanifold of $F$ traced out by the double points under the projection $\pi$. \label{item:special-crossings}
\end{enumerate}
By perturbing, we may arrange that the 0-dimensional special points of \eqref{item:special-morse} and \eqref{item:special-reid} are disjoint from the crossing arcs in \eqref{item:special-crossings}, as well as from each other. The union of all these special points is a 1-submanifold $SP$; since $\Gamma$ is also 1-dimensional, we finish by slightly perturbing $\Gamma$ in $F$ so that it is transverse $SP$ and the set of points where $\Gamma$ intersects $SP$ (points where the diagrammatic basepoint is passing through a crossing) is finite and disjoint from 0-dimensional components of $SP$, achieving \ref{item:good-basepoints}. 

From now on, we will assume all decorated cobordisms $(F, \Gamma)$ satisfy the above conditions; for clarity, we refer to them as \emph{nice decorated cobordisms}. 
We say $t \in (0,1)$ is a \emph{critical time} if at time $t$ we see a special point from \ref{item:height-morse}, \ref{item:projection-generic}, or an intersection of $\Gamma$ with $SP$.

We may thus diagrammatically present $(F,\Gamma)$ as a finite sequence of \emph{elementary cobordisms} between based link diagrams. These consist of  Morse moves away from the  basepoint, Reidemeister moves disjoint from the basepoint, and moving the basepoint past a crossing. See also   \cite[Section 2.3]{Baldwin-Hedden-Lobb}.

\begin{remark}
\label{rmk:R move involving basepoint}
A nice cobordism may a priori contain a segment representing a Reidemeister move involving the basepoint. However, we may always replace this segment of the movie with an equivalent one that first moves the basepoint outside of the region affected by the Reidemeister isotopy and then later moves it back in.
\end{remark}

\begin{definition}
\label{def:basepoint movie maps}
Given based link diagrams $(D, p)$ and $(D', p')$ which are related by one of the above elementary moves, we assign a chain map $\phi : \CKhred(D, p) \to \CKhred(D', p')$ as follows. For Morse moves and Reidemeister moves (both disjoint from the marked point), note that the usual chain map $\CKh(D) \to \CKh(D')$ preserves the respective reduced  complexes, and we define $\phi$ to be the restriction of this usual map. For a move which slides the basepoint past a crossing, the chain map is realized as a sequence of Reidemeister moves away from basepoints, followed by the global move in Figure \ref{fig:pt at infinity}, and finally followed by another sequence of Reidemeister moves (see Remark \ref{rem:no-funct}) away from basepoints. See Figure \ref{fig:around-town} for a schematic. 
\end{definition}

\begin{remark}
\label{rem:no-funct}
There are various choices made, for example for the path $\Gamma$ and Reidemeister moves before and after the global move in Definition \ref{def:basepoint movie maps}. Any choice is sufficient for our purposes; we make no claims about the functoriality of these chain maps and only require their existence. 
\end{remark}

Note that the maps of reduced complexes assigned to Reidemeister moves and basepoint-moving are chain homotopy equivalences.

\subsection{Localization and canonical Lee generators}

Consider the reduced, localized chain complex 
\[
\CKhredD(D,p) := \CKhred(D,p) \o_R R_\D.
\]
Recall the elements $e_1, e_2 \in R_\D$ from \eqref{eq:e_1 and e_2}. The following definition is a reduced version of Lee's canonical generators. 

\begin{definition}
\label{def:reduced Lee gens}
Let $(D,p)$ be a based link diagram. Given an orientation $o$ of $D$, consider the checkerboard coloring of the oriented resolution $D_o$ where the region to the left of $p$ is shaded. For a circle $C$ in $D_o$, label the circle $e_1$ if the region to the left of $C$ is shaded, and otherwise label it $e_2$. Denote this element $\r_o(D,p) \in \CKhredD(D,p)$; note that the marked circle in $D_o$ is labeled $e_1$. 
\end{definition}

The element $\r_o(D,p)$ is equal to either $\s_o$ or $\s_{\b{o}}$ (see Algorithm \ref{alg:ABcolors}). Note also that $\r_o(D,p) = \r_{o'}(D,p)$, if and only if $o=o'$ or $\b{o} = o'$, so if $D$ has $l$ components then there are $2^{l-1}$ such generators. Recall the algebra involution $\iota$ from \eqref{eq:iota involution}. The following is immediate. 

\begin{lemma}
\label{lem:reduced Lee generators}
For a based link diagram $(D,p)$ with $l$ components, $\KhredD(D,p)$ is a free $R_\D$-module of rank $2^{l-1}$, with basis given by $\{[\r_o(D,p)]\}$ over all orientations $o$ of $D$. Moreover, if $(D,p')$ is obtained from $(D,p')$ by moving the basepoint $p$ past a single crossing, then $\iota(\r_o(D,p)) =\r_o(D,p')$. 
\end{lemma}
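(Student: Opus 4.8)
The plan is to verify the two assertions of Lemma~\ref{lem:reduced Lee generators} separately, reducing the first to the already-established unreduced statements and the second to a direct computation with the involution $\iota$.

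First I would address the rank and basis claim. By definition $\CKhredD(D,p)$ is the $\grq$-shift of $\im(f_p) \otimes_R R_\D$, and since $f_p$ is multiplication by $X-U$ near the basepoint, $\CKhredD(D,p)$ is the subcomplex of $\CKhD(D)$ spanned by localized Lee generators whose marked circle carries the tensor factor $X-U$. In the basis $\{e_1,e_2\}$ of $A_\D$, one has $X-U = (V-U)e_1$, so after clearing the unit $V-U$ this subcomplex is exactly the $R_\D$-span of those $\s_o$ (over all orientations $o$) for which the marked circle is $e_1$-labeled. Referring to Definition~\ref{def:reduced Lee gens} and the discussion after Algorithm~\ref{alg:ABcolors}: given an orientation $o$, the canonical Lee generator $\s_o$ labels the marked circle $e_1$ exactly when the dot to the left of that circle lies in the shaded region, i.e.\ exactly when the checkerboard coloring is the one used in Definition~\ref{def:reduced Lee gens}; for the reversed orientation $\b o$ the label of the marked circle flips to $e_2$. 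Hence among $\{\s_o,\s_{\b o}\}$ exactly one equals $\r_o(D,p)$, so $\{[\r_o(D,p)]\}$ picks out one class per unoriented orientation class. Since the $[\s_o]$ over \emph{all} orientations form an $R_\D$-basis for $\KhD(D)$ (Proposition~\ref{prop:inverting D rank} and the subsequent discussion), and $\KhredD(D,p)$ is the direct summand of $\KhD(D)$ cut out by the $e_1$-at-$p$ condition, the classes $[\r_o(D,p)]$ form an $R_\D$-basis of $\KhredD(D,p)$. As $D$ has $2^l$ orientations falling into $2^{l-1}$ pairs $\{o,\b o\}$, this basis has $2^{l-1}$ elements, and freeness is inherited from freeness of $\KhD(D)$.

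Next I would prove $\iota(\r_o(D,p)) = \r_o(D,p')$ when $(D,p')$ is obtained from $(D,p)$ by sliding $p$ across a single crossing. Both $\r_o(D,p)$ and $\r_o(D,p')$ live in the oriented resolution $D_o$, and their underlying chains differ only in that the checkerboard coloring used to assign $e_1$/$e_2$ labels is with respect to the region left of $p$ versus the region left of $p'$. Moving the basepoint past one crossing changes which side of that crossing's strands the basepoint sits on, which is precisely the operation that swaps shaded and unshaded in the relevant checkerboard-coloring convention near $p$; globally this has the effect of interchanging the two colors, hence interchanging $e_1 \leftrightarrow e_2$ on every circle of $D_o$. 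Since $\iota$ acts on $A_\D$ by swapping $e_1$ and $e_2$ (recorded in Section~\ref{sec:Inverting the discriminant}), applying $\iota^{\otimes(\#\text{circles})}$ to $\r_o(D,p)$ gives exactly the element with all labels flipped, which is $\r_o(D,p')$. Equivalently, one can phrase this in terms of $X-U \leftrightarrow V-X$: the chain $\r_o(D,p)$ labels $e_1$-circles by the representative proportional to $X-U$ and $e_2$-circles by the one proportional to $V-X$, and $\iota$ interchanges $X-U$ and $V-X$ (Equation~\eqref{eq:iota involution}), producing the chain with the marked circle now carrying the $X-V$-type factor, i.e.\ $\r_o(D,p')$ in the $\CKhred'$ convention; under the identification $\CKhred \cong \CKhred'$ via $I$ this is the asserted equality.

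The main obstacle I anticipate is the bookkeeping in the second part: being careful about the relationship between ``the basepoint crosses one crossing'' and ``the checkerboard coloring swaps colors,'' and confirming that no extra sign or permutation of tensor factors intervenes (i.e.\ that the naive circle-by-circle identification of $D_o$ before and after the basepoint move is the correct one, as already used in the proof of Proposition~\ref{prop:reduced invariance}). Once that correspondence is pinned down, both statements are formal consequences of facts already in the excerpt, so I would keep the write-up short and lean on Definition~\ref{def:reduced Lee gens}, Algorithm~\ref{alg:ABcolors}, and the description of $\iota$ rather than recomputing anything.
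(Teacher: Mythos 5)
Your proposal is correct. The paper gives no proof for this lemma (it is declared ``immediate'' after the preceding remarks), so there is no argument to compare against; your write-up supplies exactly the details one would expect: the first half follows from the $e_1$/$e_2$ splitting of $\CKhD(D)$ (which the paper records just after the lemma) together with the fact that the $[\s_o]$ form an $R_\D$-basis of $\KhD(D)$, and the second half is a direct check that sliding $p$ past a crossing reverses the checkerboard coloring of $D_o$ and hence flips every $e_1\leftrightarrow e_2$ label, which is consistent with the paper's later observation that ``the checkerboard coloring reverses'' in the proof of Lemma~\ref{lem:reduced Lee generator under basepoint moving}. The only phrase worth tightening is ``changes which side of that crossing's strands the basepoint sits on'': what actually happens is that the dot immediately to the left of $p$ crosses one arc of the oriented smoothing as $p$ passes the crossing, so the region carrying that dot changes color, forcing the entire coloring convention (and with it all labels) to flip.
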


Recall from Section \ref{sec:Inverting the discriminant} that the involution $I$ swaps $e_1$ and $e_2$ in the localized theory (up to a sign). Below, we assume all states (pure tensors) are written with the component containing the basepoint first. For an element $y\in \CKhredD(D,p)$, we write $e_1 \cdot y$ for the basepoint action that multiplies the based circle by $e_1$. Similarly, $e_2\cdot$ indicates the action where the tensor factor corresponding to the based component is multiplied by $e_2$.

\begin{remark}
The basepoint action $e_1 \cdot$ does not quite agree with the action induced on the reduced complex by $f_p$, which is multiplication by only the numerator $X-U$ of $e_1$. This distinction will not be important for our purposes, since $V-U$ is invertible in the localized theory.
\end{remark}

The  non-reduced localized complex splits as two copies of the reduced localized complex. 
Namely, we have a distinguished isomorphism
\[
 \CKhD(D) \cong \CKhredD(D,p) \oplus \CKhredD'(D,p),
\]
where the first summand is generated by generators where the marked component is labeled $e_1$, and the second summand $\CKhredD'(D,p)$ is generated by those where the marked component is labeled $e_2$. It follows from the formulas in Equation \eqref{eq:mult and comult on X-U, X-V} that both are indeed subcomplexes of $\CKhD(D)$, and moreover the involution $I$ gives an isomorphism $\CKhredD(D,p) \cong \CKhredD'(D,p)$. 
This splitting descends to a distinguished isomorphism 
\[
    \KhD(K) \cong \KhredD(K,p) \oplus \KhredD(K,p) = R_\D \langle [\r_o] \rangle \oplus R_\D \langle [\iota(\r_o)] \rangle.
\]
for a knot $K$.

\begin{lemma}
\label{lem:reduced Lee generator under basepoint moving}
Let $(D, p)$ be a based link diagram, and suppose $(D,p')$ is obtained from $(D,p)$ by moving the basepoint past a single crossing. Let $\phi \: \KhredD(D,p) \to \KhredD(D,p')$ denote the isomorphism from Definition \ref{def:basepoint movie maps}. Then for any orientation $o$ of $D$, we have $\phi ([\r_o(D,p)]) =
\pm [\r_o(D,p')]$. 
\end{lemma}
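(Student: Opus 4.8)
The plan is to trace the canonical Lee generator through the concrete chain maps that implement the basepoint-sliding move, using the explicit description given in the proof of Proposition~\ref{prop:reduced invariance}. Recall that $\phi$ is built as a composite: a sequence of Reidemeister moves supported away from the basepoint, then the global move of Figure~\ref{fig:pt at infinity}, then another sequence of Reidemeister moves away from the basepoint. Since all of these are chain homotopy equivalences (and in the localized theory $V-U$ is invertible), $\phi$ is an isomorphism on localized homology, so it makes sense to compare $\phi([\r_o(D,p)])$ with $[\r_o(D,p')]$.

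First I would handle the Reidemeister moves away from the basepoint. By Proposition~\ref{prop:localized-map-injective} (applied to the corresponding trace cobordism, which is an annulus and hence connected), the induced map on $\KhD$ sends each canonical Lee class $[\s_o]$ to a $(V-U)$-multiple of the canonical Lee class for the target diagram; since we are dealing with knots, the map is an isomorphism, so in fact $[\s_o] \mapsto \pm (V-U)^k [\s_{o}]$ for some $k$, and after accounting for the grading shifts this is $\pm[\s_o]$ up to a unit. Moreover these Reidemeister chain equivalences preserve the reduced subcomplex $\CKhredD(D,p)$ (the marked circle stays marked and $e_1$-labeled), so they carry $[\r_o]$, which equals one of $[\s_o]$ or $[\s_{\b o}]$, to $\pm$ the corresponding reduced Lee class for the intermediate based diagram. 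The key point is that the orientation $o$ and the shading convention ``region to the left of the basepoint is shaded'' are preserved by a Reidemeister move away from the basepoint, so the \emph{label} ($e_1$ vs.\ $e_2$) of every circle is unchanged: no circle crosses the basepoint arc. Hence the reduced Lee generator is carried to $\pm$ the reduced Lee generator.

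Next I would analyze the global move of Figure~\ref{fig:pt at infinity}. Here the chain map is the ``obvious identification of circles in each resolution before and after the move,'' as stated in the proof of Proposition~\ref{prop:reduced invariance}; in particular on the oriented resolution $D_o$ it is literally the identity on the underlying $R_\D$-module, matching up circles one-to-one. What must be checked is that this identification is compatible with the labeling rule of Definition~\ref{def:reduced Lee gens} for the \emph{moved} basepoint $p'$: a circle $C$ labeled $e_1$ (region to the left of $C$ is shaded, with the $p$-shading) must still be labeled $e_1$ with respect to the $p'$-shading, and likewise for $e_2$. Since the global move passes the basepoint ``through infinity'' along the unbounded region, and the $(1,1)$-tangle $T$ is unchanged, the checkerboard coloring relative to $p'$ agrees with that relative to $p$ on each resolution circle (both are determined by the same tangle $T$ and the same orientation $o$), so the labels are preserved and $\r_o(D,p) \mapsto \r_o(D,p')$ on the nose. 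I would spell this out by noting that moving the basepoint along the unbounded region does not change which side of any resolution circle is shaded. This yields $\phi([\r_o(D,p)]) = \pm[\r_o(D,p')]$ after composing the three pieces, the sign being the accumulated sign ambiguity from the Reidemeister chain equivalences.

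The main obstacle I anticipate is the bookkeeping in the global-move step: making precise that the ``obvious identification'' of circles genuinely intertwines the two labeling conventions, i.e.\ that sliding the basepoint through the point at infinity does not flip the shading seen by any resolution circle. One clean way around this is to observe that $\r_o(D,p)$ is characterized intrinsically (independently of the diagram) as the unique canonical Lee class whose based tensor factor is $e_1$ rather than $e_2$, together with Lemma~\ref{lem:reduced Lee generators}'s identification $\KhredD(K,p) = R_\D\langle[\r_o]\rangle$; then since $\phi$ restricts from a chain isomorphism $\CKhredD(D,p) \to \CKhredD(D,p')$ of rank-one free modules sending cycles to cycles and respecting the localized splitting, it must send the generator to a unit multiple of the generator, and a grading/coefficient count pins the unit down to $\pm 1$. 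I would use whichever of these two arguments is shorter once the explicit form of the global-move identification is written out.
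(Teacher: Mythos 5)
Your proposal follows the same overall strategy as the paper's proof: decompose $\phi$ into Reidemeister moves and the global move, invoke Sano's divisibility results to track canonical Lee classes through the Reidemeister pieces, and analyze how the checkerboard coloring interacts with the global move. The main difference is bookkeeping: the paper tracks the unreduced generators $\s_o$ throughout and only converts to the reduced generators at the very end, via the identity $\r_o(D,p') = \iota(\r_o(D,p)) = \s_{\b{o}}$ from Lemma~\ref{lem:reduced Lee generators}, whereas you track the reduced generators $\r_o$ directly by observing that the $\r$-shading (``left of the basepoint is shaded'') is preserved through every intermediate step, including the global move, while the $\s$-shading (``unbounded region is unshaded'') flips at the global move. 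These are two pictures of the same fact, and yours is arguably slightly tidier since it avoids the final conversion via $\iota$; but it is not a genuinely different route.

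A few small things to fix. First, your notation in the global-move paragraph conflates the intermediate basepoints $(D_1,p_1) \to (D_2,p_2)$ with the overall endpoints $(D,p)$ and $(D,p')$; on the \emph{same} diagram $D$ the colorings relative to $p$ and $p'$ are in fact \emph{opposite} (that is the content of $\r_o(D,p') = \iota(\r_o(D,p))$), so writing ``the checkerboard coloring relative to $p'$ agrees with that relative to $p$'' is only correct if $p$ and $p'$ mean the intermediate basepoints of the global move, where the marked point literally does not move. Second, your citation of Proposition~\ref{prop:localized-map-injective} for the Reidemeister steps is not quite right in the generality of the lemma, which is stated for link diagrams: that proposition requires a \emph{connected} cobordism, and the Reidemeister trace of an $l$-component link is $l$ disjoint cylinders. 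The paper instead invokes Sano's Proposition~2.13, which is stated directly for Reidemeister moves on arbitrary links. Third, both your primary argument and your fallback ``rank-one free module'' argument assert $(V-U)^k = 1$ but leave the grading argument implicit; the paper is also terse here, but it at least flags the reason (the composition $\phi$ has quantum degree $0$ and $\r_o(D,p)$, $\r_o(D,p')$ sit in the same quantum grading). Note also that the rank-one fallback only applies to knots, and even then a unit of $R_\D$ is an element of $\bF^\times\cdot(V-U)^{\Z}$, so ruling out nontrivial scalars in $\bF^\times$ still requires the explicit Sano-style analysis and cannot be gotten purely from module-theoretic structure.
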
 

\begin{proof}

Write $\phi$ as a composition $\KhredD(D,p) \rar{\phi_0} \KhredD(D_1, p_1) \rar{\phi_1} \KhredD(D_2, p_2) \rar{\phi_2} \KhredD(D, p')$, where $\phi_0$ is induced by the first sequence of Reidemeister moves, $\phi_1$ is the global move of Figure \ref{fig:pt at infinity}, and $\phi_2$ is another sequence of Reidemeister moves. Let $o_1$ and $o_2$ denote the orientations on $D_1$ and $D_2$. Without loss of generality, assume $\r_o(D,p) = \s_o(D)$. By \cite[Proposition 2.13]{Sano-divisibility}, $\phi_0([\s_o(D)]) = \pm (U-V)^l [\s_{o_1}(D_1)]$  for some $l$. We have $\phi_1([\s_{o_1}(D_1)]) = [\s_{\b{o}_2}(D_2)]$ because the checkerboard coloring reverses. Applying \cite[Proposition 2.13]{Sano-divisibility} again we obtain $\phi_2([\s_{\b{o}_2}(D_2)]) = \pm(U-V)^{l'} [\s_{\b{o}}(D)]$ for some $l'\in \Z$. By the last statement in Lemma \ref{lem:reduced Lee generators}, $\r_{o}(D,p') = \iota(\r_o(D,p)) =  \s_{\b{o}}$, so we have $\phi ([\r_o(D,p)]) =
\pm (U-V)^{l+l'} [\r_o(D,p')]$. Finally, $l+l'=0$ for quantum grading reasons. 
\end{proof}

In fact, any decorated connected cobordism between knots gives an isomorphism on the localized reduced homology. 

\begin{proposition}
\label{prop:reduced Lee generators under connected cob}
Let $(F,\Gamma)$ be a decorated connected cobordism from $(K, o)$ to $(K', o')$. 
Pick based diagrams $(D, p)$ and $(D', p')$ for $(K, p)$ and $(K', p')$ respectively.
There exists a chain map $\phi \: \CKhred(D, p) \to \CKhred(D', p')$ of degree $\chi(F)$ such that, after inverting $\D$, the induced map on homology is an isomorphism. 
\end{proposition}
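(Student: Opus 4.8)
The plan is to decompose the decorated cobordism $(F,\Gamma)$ into the elementary moves provided by the earlier discussion, assign to each the corresponding chain map of reduced complexes from Definition \ref{def:basepoint movie maps}, and compose. The resulting chain map $\phi$ has degree $\chi(F)$ because the elementary pieces (Reidemeister moves, basepoint-sliding moves, and Morse moves) contribute exactly as they do in the non-reduced theory: Reidemeister and basepoint moves contribute degree $0$ and Morse moves contribute the Euler characteristic of the corresponding elementary cobordism, and these Euler characteristics sum to $\chi(F)$. So the degree statement is immediate from the construction.

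The substance of the proof is showing that the induced map on \emph{localized} reduced homology is an isomorphism. Here I would use the splitting $\CKhredD(D,p) \oplus \CKhredD'(D,p) \cong \CKhD(D)$ recorded just above the statement, together with the fact that $\phi$ is compatible with this splitting up to the involution $I$ (equivalently, up to swapping the two summands): Reidemeister and Morse moves away from the basepoint preserve which circle is marked, hence preserve the decomposition, while the global move of Figure \ref{fig:pt at infinity} reverses the checkerboard coloring and therefore interchanges the roles of $e_1$ and $e_2$, i.e.\ conjugates by $I$. Thus $\phi$, after inverting $\D$, fits into a commuting square with the induced cobordism map $F_* \: \KhD(K) \to \KhD(K')$ on the non-reduced localized homology, which is an isomorphism by Proposition \ref{prop:localized-map-injective} (since $F$ is connected and $K,K'$ are knots). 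Concretely, using Lemma \ref{lem:reduced Lee generator under basepoint moving} for the basepoint-sliding pieces and Proposition \ref{prop:localized-map-injective} (via Sano's \cite[Proposition 2.13, 3.17]{Sano-divisibility}) for the connected-cobordism pieces, one tracks the canonical Lee generator $[\r_o(D,p)]$ through each elementary move and sees that it is sent to a nonzero $R_\D$-multiple of $[\r_{o'}(D',p')]$ (with the orientation possibly flipped, then flipped back, according to how many times the coloring reverses). Since $\KhredD(K,p)$ is free of rank $1$ on $[\r_o]$ by Lemma \ref{lem:reduced Lee generators}, sending the generator to a nonzero multiple of the generator is exactly the statement that the map is an isomorphism after inverting $\D$ (indeed the multiple must be a unit, for quantum grading reasons as in the proof of Lemma \ref{lem:reduced Lee generator under basepoint moving}, or simply because $R_\D$-module maps $R_\D \to R_\D$ sending $1$ to a nonzerodivisor that is detected by the ambient isomorphism $F_*$ must be surjective).

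The main obstacle I anticipate is bookkeeping rather than conceptual: one must check that the composite $\phi$ is well-defined enough for the statement — but Remark \ref{rem:no-funct} explicitly disclaims functoriality, so it suffices to fix one decomposition of $(F,\Gamma)$ into elementary moves and one set of auxiliary choices, which the niceness hypotheses on decorated cobordisms guarantee exists. The only real point requiring care is handling the Morse moves (cups, caps, saddles) in the reduced setting: a cup or cap away from the basepoint does not affect the marked circle, and a saddle away from the basepoint either merges or splits circles none of which is the marked one, or merges/splits in a way where the marked circle persists; in all cases the non-reduced cobordism map restricts to the reduced subcomplex (as already noted in Definition \ref{def:basepoint movie maps}), and on the localized level the canonical generators transform by Sano's proposition. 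Once this is in place, the chain of isomorphisms on $\KhredD$ assembles to give the claim.
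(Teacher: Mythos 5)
Your overall strategy---decompose the decorated cobordism into elementary moves, compose the maps from Definition \ref{def:basepoint movie maps}, and track the canonical Lee generator---is the same as the paper's, and the degree computation is correctly dispatched. But two points are glossed over, and the second one is a genuine gap.

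First, your claim that $\phi$ fits into a commuting square with $F_*$ via the splitting $\CKhD(D) \cong \CKhredD(D,p) \oplus \CKhredD'(D,p)$ is muddier than you make it sound. The splitting is defined by the location of the basepoint (which circle is labeled $X-U$ versus $X-V$), not by the checkerboard coloring; moving the basepoint past a crossing \emph{changes} which subcomplex of $\CKhD$ is the reduced one, so the ``same'' splitting does not persist along the movie. You can salvage this, but it takes care you don't supply, and the paper does not argue this way.

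Second, and more seriously: you assert that one ``tracks the canonical Lee generator $[\r_o(D,p)]$ through each elementary move and sees that it is sent to a nonzero $R_\D$-multiple of $[\r_{o'}(D',p')]$.'' But the intermediate slices $D_i$ of a knot cobordism are generally \emph{links} with several components, so $\KhredD(D_i,p_i)$ has rank $2^{\ell-1} > 1$, and the image of $[\r_o(D,p)]$ at an intermediate stage is a \emph{sum} of reduced Lee generators indexed by permissible orientations on $F_i$, not a multiple of a single one. The crux of the argument is why that sum is never zero---and there the key fact is that a canonical generator $\r_\alpha(D_i,p_i)$ is annihilated by an elementary map only when two oriented strands are merged incompatibly, which cannot happen in the oriented cobordism $F$; combined with the fact that a connected $F$ forces distinct permissible orientations on $F_i$ to induce distinct orientations on $D_i$ (so no cancellation among summands), one gets nonvanishing at every stage. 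This inductive orientation bookkeeping (the paper's Claims 1--3) is exactly what is missing from your write-up. Your final ``unit for quantum grading reasons'' remark is also loose: degree-zero elements of $R_\D$ are not all units; what actually happens is that the multiple is $\pm(V-U)^k$, which is a unit because $\D = (V-U)^2$ has been inverted.
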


\begin{proof}
The argument is similar to \cite[Proposition 4.1]{Rasmussen} and \cite[Proposition 3.17]{Sano-divisibility}.  Represent $(F,\Gamma)$ as a movie of based link diagrams $(D,p) = (D_0, p_0), (D_1,p_1),\ldots, (D_k, p_k) = (D',p')$, where $(D_i, p_i) = F\cap (\R^3\times \{i/k\})$, and $(D_{i+1}, p_{i+1})$ is obtained from $(D_i, p_i)$ by one of elementary moves as described in Definition \ref{def:basepoint movie maps}. For each elementary move $(D_i, p_i) \to (D_{i+1}, p_{i+1})$ consider the map 
\[
\phi_i : \Khred(D_i, p_i) \to \Khred(D_{i+1}, p_{i+1})
\]
as defined in Definition \ref{def:basepoint movie maps}. Let $\phi = \phi_k \circ \cdots \circ \phi_0$ denote their composition and let $\psi_i = \phi_{i-1} \circ \cdots \circ \phi_0$ denote their composition until the $i$-th slice. We will show that $\phi([\r_o(D, p)] = \pm (V-U)^k \r_{o'}(D',p')$ for some $k\in \Z$.

For $1\leq i \leq k$, let $F_i = F\cap (\R^2\times [0,i/k])$. An orientation of $F_i$ is \emph{permissible} if it induces the orientation $o$ on $D$. Let $O_i$ denote the set of permissible orientations on $F_i$. An  orientation of $F_i$ induces an orientation on $D_i$, and moreover since $F$ is connected, distinct orientations on $F_i$ induce distinct orientations on $D_i$. For $\alpha \in O_i$, we also write $\alpha$ to denote the induced orientation on $D_i$. The statement follows from three claims: \\
\\
\noindent
\textbf{Claim 1:} For any $\alpha \in O_i$, $\displaystyle \phi_i([\r_\alpha(D_i,p_i)]) = \sum_{\beta \in \Omega_{i+1}} \pm (V-U)^{l_\beta} [\r_{\beta}(D_{i+1},p_{i+1})]$, where  $\Omega_{i+1} \subset O_{i+1}$ is the set of orientations $\beta$ that extend $\alpha$. This set may be empty. \\
\\
\textbf{Claim 2:} For any $y\in \Khred(D_i, p_i)$ of the form $\displaystyle y= \sum_{\alpha \in \Omega_i} \pm (V-U)^{l_\alpha} [\r_{\alpha}(D_i,p_i)]$ where $\Omega_i \subset O_i$, we have that $ \displaystyle \phi_i(y) = \sum_{\beta \in \Omega_{i+1}} \pm (V-U)^{l_\beta} [\r_{\beta}(D_{i+1},p_{i+1})]$ for some $\Omega_{i+1} \subset O_{i+1}$. \\
\\
\textbf{Claim 3:} For each $0\leq i \leq k-1$, $ \displaystyle \psi_i ([\r_o(D,p)]) = \sum_{\alpha \in \Omega_i} \pm (V-U)^{l_\alpha} [\r_{\alpha}(D_i,p_i)]$, where $\Omega_i \subset O_i$ is nonempty. 

Before verifying the above claims, note that Claim 3 applied to $\psi_{k-1} = \phi$ implies the statement of the lemma, since there is only one permissible orientation on $F_1 = F$ (namely, the given orientation on $F$ which induces $o'$ on $D'$).

To show Claim 1, suppose first that $(D_i, p_i) \to (D_{i+1}, p_{i+1})$ is a Morse or Reidemeister move away from the basepoint. Note that $\r_\alpha(D_i,p_i)$ is equal to one of $\s_\alpha(D_i)$ or $\s_{\b{\alpha}}(D_i)$. Without loss of generality, suppose $\r_\alpha(D_i,p_i) = \s_\alpha(D_i)$. By \cite[Proposition 3.17]{Sano-divisibility}, 
\[
\phi_i([\s_\alpha(D_i)]) = \sum_{\beta  \in \Omega_{i+1}} \pm (V-U)^{l_\beta} [\s_{\beta}(D_{i+1})].
\]
Moreover, since in the checkerboard coloring used to define the reduced Lee generators, the unbounded region is colored the same in $D_i$ and $D_{i+1}$, we also have $\s_{\beta}(D_{i+1}) = \r_{\beta}(D_{i+1}, p_{i+1})$, which verifies Claim 1 in this case. If the move $(D_i, p_i) \to (D_{i+1}, p_{i+1})$ slides the basepoint past a crossing, then Claim 1 follows from Lemma \ref{lem:reduced Lee generator under basepoint moving}. 

Claim 2 follows from Claim 1, since distinct permissible orientations on $F_i$ extend to distinct permissible orientations on $F_{i+1}$. 

Claim 3 follows from repeatedly applying Claim 2 and the fact that a generator $\r_\alpha(D_i,p_i)$ is sent to zero by $\phi_i$ only if two oriented strands are incompatibly merged, which can never appear in the oriented cobordism $F$.  
\end{proof}

For a based link diagram $(D,p)$, let $T(D,p) \subset \Khred(D,p)$ denote the $R$-torsion submodule.

\begin{corollary}
\label{cor:reduced induced map is injective mod torsion}
Let $(K,p)$ and $(K',p')$ be oriented knots and $(F,\Gamma)$ a decorated connected cobordism from between them. Fix oriented based diagrams $(D,p)$ and $(D',p')$ for $(K,p)$ and $(K',p')$ respectively. Then the  map
\[
\phi_F \: \Khred(D,p)/T(D,p) \to \Khred(D',p')/T(D',p')
\]
from Proposition \ref{prop:reduced Lee generators under connected cob} is injective.
\end{corollary}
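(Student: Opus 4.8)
The plan is to mimic the proof of Corollary~\ref{cor:induced map is injective mod torsion}, replacing the complexes $\CKh$, $\CKhD$ by their reduced counterparts $\CKhred$, $\CKhredD$, and replacing Proposition~\ref{prop:localized-map-injective} by Proposition~\ref{prop:reduced Lee generators under connected cob}. The one genuinely new ingredient that must be recorded first is a reduced analogue of Corollary~\ref{cor:only c torsion}: the module $\Khred(D,p)$ contains only $(V-U)$-torsion, i.e.\ every element of $T(D,p)$ is annihilated by some power of $V-U$.

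To establish that analogue I would invoke exactness of localization at $V-U$ (equivalently, at $\D$). Since localization is exact it commutes with homology, so $\Khred(D,p)\o_R R_\D \cong \KhredD(D,p)$, and by Lemma~\ref{lem:reduced Lee generators} the right-hand side is a free $R_\D$-module (of rank $2^{l-1}$ for an $l$-component link). As $R_\D$ is an integral domain, a free $R_\D$-module is torsion-free, so the image of $T(D,p)$ under the localization map $\lambda\colon \Khred(D,p) \to \KhredD(D,p)$ is a torsion submodule of a torsion-free module, hence zero; equivalently $T(D,p)\o_R R_\D = 0$. This says exactly that $T(D,p)$ is $(V-U)$-torsion, and moreover that $\ker(\lambda) = T(D,p)$: the inclusion $\ker(\lambda)\subseteq T(D,p)$ is clear since $\ker(\lambda)$ consists of elements killed by a power of $V-U$, and the reverse inclusion is what we just proved.

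The conclusion is then a diagram chase. The chain map $\phi$ of Proposition~\ref{prop:reduced Lee generators under connected cob} induces a commutative square
\[
\begin{tikzcd}
    \Khred(D,p) \ar[r,"\phi_*"] \ar[d,"\lambda"'] & \Khred(D',p') \ar[d,"\lambda'"] \\
    \KhredD(D,p) \ar[r,"\cong"] & \KhredD(D',p')
\end{tikzcd}
\]
whose bottom arrow is an isomorphism by Proposition~\ref{prop:reduced Lee generators under connected cob}. Passing to quotients by $\ker(\lambda) = T(D,p)$ and $\ker(\lambda') = T(D',p')$ yields
\[
\begin{tikzcd}
    \Khred(D,p)/T(D,p) \ar[r,"\phi_F"] \ar[d,hook] & \Khred(D',p')/T(D',p') \ar[d,hook] \\
    \KhredD(D,p) \ar[r,"\cong"] & \KhredD(D',p')
\end{tikzcd}
\]
with injective vertical maps. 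Commutativity forces $\phi_F$ to be injective: if $\phi_F(x)=0$, then the image of $x$ under the injective down-then-across composite vanishes, hence its image under the left vertical vanishes, hence $x=0$.

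I expect no serious obstacle: the argument is structurally identical to Corollary~\ref{cor:induced map is injective mod torsion}, and all the new input (freeness of $\KhredD$, existence of the localized isomorphism) is already in Lemma~\ref{lem:reduced Lee generators} and Proposition~\ref{prop:reduced Lee generators under connected cob}. The only point requiring a little care is confirming that $T(D,p)$ is \emph{precisely} the kernel of the localization map rather than merely contained in it; as above, this is immediate once freeness of $\KhredD(D,p)$ over the domain $R_\D$ is known. One should also note that the degree-$\chi(F)$ grading shift is irrelevant, since injectivity is a statement about the underlying $R$-modules and the vertical localization maps are grading-preserving.
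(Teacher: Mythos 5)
Your proof is correct and follows essentially the same approach as the paper, which simply states that the argument parallels Corollary~\ref{cor:induced map is injective mod torsion}. You helpfully spell out the one implicit ingredient — the reduced analogue of Corollary~\ref{cor:only c torsion}, deduced from exactness of localization and the freeness of $\KhredD(D,p)$ guaranteed by Lemma~\ref{lem:reduced Lee generators} — which the paper leaves to the reader.
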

\begin{proof}
The proof is similar to that of Corollary \ref{cor:induced map is injective mod torsion}.
\end{proof}

\subsection{Concordance invariants from the reduced  theory}

In this section, we consider based knots $K_0$ and $K_1$ and a decorated cobordism $(F,\Gamma)\: (K_0, p_0) \to (K_1, p_1)$ between them. We assume that $(F, \Gamma)$ is already in the preferred position described in Section \ref{sec:decorated-cobord}.

If $(K,p)$ is a based knot with orientation $o$, the localized reduced homology $\KhredD(D,p)$ is a free rank $1$ $R_\D$-module, generated by $\r_o(D,p)$. 
Let $\grtred$ denote the grading induced by $\gr_t$ on $\CKhred(D)  \xhookrightarrow{i} \CKh(D)$; note that the inclusion map $i$ drops the quantum grading by 1, since we have defined $\CKhred(D,p) = \im (f_p) \{1\}$. We again abuse notation and also use $\grtred$ to denote the induced gradings on $\CKhredD(D,p)$ as well. Note that due to the quantum grading shift, $\grtred(x) = \grt(ix) + 1$.

\begin{definition}
Define a reduced version of the concordance invariant $s_t$ as follows:
\[
    \til{s}_t(K,p) = \max \{  \grtred(\b{\eta}) \st 
    \b{\eta} \in \Khred(D,p) / T(D,p) \text{ is nonzero}        
    \}
\]
for any based diagram $(D,p)$ of $(K,p)$. 
Note that there is no additional quantum grading shift: $\til{s}_t(K,p) = \grtred(D)$. 
\end{definition}

Just from the definitions, we see that 
\[
    \tilde s_t(K) = \grtred(D) \leq \grt(D) + 1 = s_t(K) + 2.
\]
The inequality arises because a $\grtred$-maximizing cycle $y \in \CKhred(D,p)$ might not maximize $\gr_t$ after inclusion into $\CKh(D)$: $\grtred(y) = \grt(i y)+1 \leq \grt(D) + 1$. 

\begin{theorem}
\label{thm:reduced-concordant}
If based knots $(K,p)$ and $(K,p')$ are concordant, then $\til{s}_t(K,p) = \til{s}_t(K',p')$.
\end{theorem}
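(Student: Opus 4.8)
The plan is to transcribe the proof of Theorem~\ref{thm:concordance-invariant} into the reduced setting, replacing each ingredient by its reduced analogue. That argument uses three facts: a connected cobordism induces a map which is injective modulo torsion (Corollary~\ref{cor:induced map is injective mod torsion}); such a map is filtered with filtered degree at least $\chi$ (Lemma~\ref{lem:grt filtered degree}); and a concordance has $\chi = 0$. The reduced replacements are Corollary~\ref{cor:reduced induced map is injective mod torsion}, the (yet-to-be-verified) statement that the chain map $\phi_F$ of Proposition~\ref{prop:reduced Lee generators under connected cob} satisfies $\grtred(\phi_F(z)) \geq \grtred(z) + \chi(F)$, and the same Euler characteristic observation. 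Everything but the middle statement is formal.

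To prove that $\phi_F$ is filtered of degree at least $\chi(F)$, I would use the movie decomposition from the proof of Proposition~\ref{prop:reduced Lee generators under connected cob}, which writes $\phi_F$ as a composition of the elementary maps of Definition~\ref{def:basepoint movie maps}, and bound each factor. For a Morse move or a Reidemeister move away from the basepoint, the elementary map is the restriction to the reduced subcomplex of the usual chain map on $\CKh$, so Lemma~\ref{lem:grt filtered degree} gives $\grt(z') \geq \grt(z) + \chi$ for the relevant elementary Euler characteristic, and hence the same estimate for $\grtred$ since $\grtred(x) = \grt(ix)+1$. For a move sliding the basepoint past a crossing, the elementary map is a composition of (away-from-basepoint) Reidemeister maps with the global move of Figure~\ref{fig:pt at infinity}; here I would check directly that the global move, being the isomorphism re-identifying circles in each resolution, alters neither the $U$- and $V$-powers nor the quantum grading of a Khovanov generator, so it preserves $\grtred$ exactly. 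Composing over the movie gives the claimed bound, which collapses to $\grtred(\phi_F(z)) \geq \grtred(z)$ when $F$ is a concordance.

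With this established, I would first note that applying the bound to the product concordance $K \times [0,1]$ shows $\grtred(D)$ depends only on the based knot $(K,p)$ (so $\til{s}_t$ is well defined), using the reduced analogues of Lemmas~\ref{lem:filtrations}, \ref{lem:grt defines a filtration}, and~\ref{lem:grading at 0,2}; these hold because $\CKhred(D,p)$ is a subcomplex of $\CKh(D)$ on which $F^t$ restricts, retaining discreteness for $t \in (0,2)$ and discreteness of its set of grading values for $t \in \{0,2\}$. The same lemmas provide a nontorsion cycle $z_0 \in \CKhred(D_0,p_0)$ with $\grtred(z_0) = \til{s}_t(K_0,p_0)$. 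Then, given a concordance $(F,\Gamma)\colon (K_0,p_0) \to (K_1,p_1)$---so $F$ is an annulus with $\chi(F) = 0$---Corollary~\ref{cor:reduced induced map is injective mod torsion} makes the class of $\phi_F(z_0)$ nonzero in $\Khred(D_1,p_1)/T(D_1,p_1)$, and one obtains
\[
\til{s}_t(K_0,p_0) = \grtred(z_0) \leq \grtred(\phi_F(z_0)) \leq \grtred(D_1) = \til{s}_t(K_1,p_1),
\]
the last inequality being the definition of $\grtred(D_1)$ as a supremum over nonzero classes in the torsion quotient. Repeating with the reversed concordance $(\overline{F},\overline{\Gamma})\colon (K_1,p_1) \to (K_0,p_0)$ yields the opposite inequality, hence equality.

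The main obstacle is the filtered-degree bound for the basepoint-sliding maps: unlike the Morse and away-from-basepoint Reidemeister maps, these are not restrictions of TQFT cobordism maps, so Lemma~\ref{lem:grt filtered degree} does not apply and one must analyze the global move of Figure~\ref{fig:pt at infinity} by hand (it is harmless, being a relabeling isomorphism, but it is the one step that is not automatic). The secondary technical point, the failure of discreteness of $F^t$ at $t = 0,2$, is handled just as in the non-reduced case via Lemma~\ref{lem:grading at 0,2}, since the $\grtred$-values on nonzero chains still form a discrete subset of $\R$.
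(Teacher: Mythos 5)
Your proof is correct and takes the same approach as the paper's, whose proof literally reads: ``The proof is similar to that of Theorem~\ref{thm:concordance-invariant}, using Corollary~\ref{cor:reduced induced map is injective mod torsion} in place of Corollary~\ref{cor:induced map is injective mod torsion}.'' You have, helpfully, filled in the one detail that this terse proof leaves implicit: that the reduced cobordism map of Proposition~\ref{prop:reduced Lee generators under connected cob} is filtered of degree $\chi(F)$, which requires treating the basepoint-sliding move separately since it is not a restriction of a TQFT cobordism map; your observation that the global move of Figure~\ref{fig:pt at infinity} is a grading-preserving relabeling isomorphism, combined with Lemma~\ref{lem:grt filtered degree} for the Morse and away-from-basepoint Reidemeister moves, is exactly what is needed to close that gap.
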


\begin{proof}
The proof is similar to that of Theorem \ref{thm:concordance-invariant}, using Corollary  \ref{cor:reduced induced map is injective mod torsion} in place of Corollary \ref{cor:induced map is injective mod torsion}.
\end{proof}

\subsection{Mirrors}
\label{sec:mirrors}

We now address the behavior of $s_t$ under mirroring. While we expect $s_t$ to be a concordance homomorphism, i.e.\ a homomorphism from the knot concordance group to the group of piecewise-linear functions on the interval $[0,2]$, the following is currently unknown:\
\begin{question}
For knots $K$ and $J$, is $s_t(K \# J) = s_t(K) + s_t(J)$ for all $t \in [0,2]$?
\end{question}
If the answer to the above is yes, then because $s_t=0$ for the unknot, we would know that for any knot $K$, $s(-K) = -s(K)$, where $-K$ is the mirror of the knot $K$.

In the meantime, we can still study the structure of the $\grt$-filtered complex for the mirrored knot.

\subsubsection{$\F$ is self-dual}

Let $S$ be a commutative ring and $\G = (B, \iota, \epsilon, m, \Delta)$ a Frobenius extension of $S$ with underlying $S$-module $B$, trace $\epsilon \: B\to S$, unit $\iota \: S\to B$, multiplication $m \: B\o B \to B$, and comultiplication $\Delta \: B\to B\o B$. Recall from \cite{Khfrobext} the \emph{dual} Frobenius extension $\G^* = (B^*, \epsilon^*, \iota^*, \Delta^*, m^*)$ with underlying $S$-module the  dual $B^* = \Hom_S(B,S)$, and structure maps given by dualizing the structure maps of $\G$. Following \cite{Khfrobext},  $\G$ is said to be \emph{self dual} if $\G \cong \G^*$ as Frobenius systems; that is, there exists an $S$-linear isomorphism $\phi \: B\to B^*$ which identifies $(\iota, \epsilon, m, \Delta)$ with $(\epsilon^*, \iota^*, \Delta^*, m^*)$. 

\begin{proposition}
\label{prop:self dual}
The Frobenius system $\F =(A,\iota, \varepsilon, m, \Delta)$ defined in Section \ref{sec:link homology} is self dual.
\end{proposition}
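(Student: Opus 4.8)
The plan is to produce the self-duality isomorphism from the trace pairing and then check that it intertwines the four structure maps. Let $\beta\colon A\otimes_R A\to R$ be the symmetric $R$-bilinear form $\beta(a,b)=\varepsilon(ab)$ (symmetric since $A$ is commutative) and let $\phi\colon A\to A^*=\Hom_R(A,R)$ be its adjoint, $\phi(a)=\beta(a,-)$. First I would show $\phi$ is an isomorphism by computing the Gram matrix of $\beta$ in the basis $\{1,X\}$: from $X^2=(U+V)X-UV$ in $A$ one gets $\beta(1,1)=\varepsilon(1)=0$, $\beta(1,X)=\varepsilon(X)=1$, $\beta(X,X)=\varepsilon(X^2)=U+V$, so the matrix of $\phi$ relative to $\{1,X\}$ and its dual basis is $\left(\begin{smallmatrix}0&1\\1&U+V\end{smallmatrix}\right)$, of determinant $-1\in R^\times$. (Equivalently, the formula $\Delta(1)=X\otimes 1+1\otimes\bigl(X-(U+V)\bigr)$ already exhibits $\{X,1\}$ and $\{1,\,X-(U+V)\}$ as $\beta$-dual bases.) Thus $\phi$ is an $R$-module isomorphism, and it is homogeneous for the gradings of Section \ref{sec:kh-gradings} (with the induced grading on $A^*$), so it is an isomorphism of graded $R$-modules.

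Next I would verify the compatibilities in the definition of self-duality, namely that $\phi$ carries $(\iota,\varepsilon,m,\Delta)$ to $(\varepsilon^*,\iota^*,\Delta^*,m^*)$, using the canonical identifications $R^*\cong R$ and $(A\otimes_R A)^*\cong A^*\otimes_R A^*$ (valid since all modules in sight are finite free over $R$). The unit-counit identities are immediate: $\phi(\iota(r))(a)=\varepsilon(ra)=r\,\varepsilon(a)$ gives $\phi\circ\iota=\varepsilon^*$, and $\iota^*(\phi(a))=\phi(a)(1)=\varepsilon(a)$ gives $\iota^*\circ\phi=\varepsilon$. For the multiplication-comultiplication pair I would invoke the standard Frobenius identity $\sum\varepsilon(x\,c_{(1)})\,c_{(2)}=xc$ for all $x,c\in A$ (writing $\Delta(c)=\sum c_{(1)}\otimes c_{(2)}$), which follows from the facts that $\Delta$ is a map of $A$-bimodules and that $(\varepsilon\otimes\id)\circ\Delta=\id_A$ --- both readable directly off the formulas for $\varepsilon$ and $\Delta$ in Section \ref{sec:link homology}. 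Granting it, for $a,b,c\in A$ one computes $\bigl(\Delta^*\circ(\phi\otimes\phi)\bigr)(a\otimes b)(c)=\sum\varepsilon(a\,c_{(1)})\,\varepsilon(b\,c_{(2)})=\varepsilon\bigl(b\cdot\sum\varepsilon(a\,c_{(1)})\,c_{(2)}\bigr)=\varepsilon(abc)=\phi(m(a\otimes b))(c)$, so $\phi\circ m=\Delta^*\circ(\phi\otimes\phi)$; the mirror-image computation (using $\sum\varepsilon(x\,a_{(1)})\,a_{(2)}=xa$) yields $m^*\circ\phi=(\phi\otimes\phi)\circ\Delta$.

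I do not anticipate a genuine obstacle: the entire content is the observation that self-duality of a Frobenius extension is equivalent to nondegeneracy of its trace form, which here is the one-line unit-determinant computation above. The only step requiring a small input beyond unwinding definitions is the multiplication-comultiplication compatibility, where one needs the Frobenius dual-basis identity; the remaining verifications are bookkeeping, with some care needed to track which structure map of $\F^*$ (counit, unit, multiplication, comultiplication) corresponds to which map of $\F$ under the conventions of \cite{Khfrobext}.
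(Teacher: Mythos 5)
Your proof is correct, and it takes a genuinely different route from the paper. The authors prove self-duality by factoring the isomorphism through an auxiliary Frobenius system $\F'$ with underlying algebra $R[X]/((X+U)(X+V))$ (that is, $\F$ with $U,V$ negated): they quote $\F^*\cong\F'$ via $1^*\mapsto X,\ X^*\mapsto 1$ from \cite[Example~2]{Khfrobext}, and then construct an isomorphism $\F'\cong\F$ via the algebra map $X\mapsto X+U+V$, verifying compatibility with comultiplication by an explicit computation in the basis $\{1,X\}$. You instead observe the isomorphism intrinsically as the adjoint $\phi(a)=\varepsilon(a\cdot-)$ of the trace pairing, check invertibility from the unit-determinant Gram matrix, and deduce the four structure-map compatibilities from the abstract Frobenius identities $(\varepsilon\otimes\id)\Delta=\id$ and $A$-bilinearity of $\Delta$. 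Both arguments produce the same isomorphism $\gamma\colon 1\mapsto X^*,\ X\mapsto 1^*+(U+V)X^*$, but yours makes transparent that this self-duality is an instance of a general phenomenon for commutative Frobenius algebras with symmetrically balanced grading (here $\deg m+\deg\varepsilon=0$, so $\phi$ is degree-preserving), while the paper's route isolates the intermediate object $\F'$ and produces the explicit dual structure maps, which the authors go on to reuse in the proof of Proposition~\ref{prop:reduced mirror}. One small caution about your closing remark: nondegeneracy of the trace form is automatic for any Frobenius extension, so stating that self-duality is ``equivalent to'' it slightly overclaims; what your argument really shows is that for a \emph{commutative} Frobenius algebra (so the pairing is symmetric) with compatible grading conventions, the trace-pairing adjoint always furnishes a self-duality, and the proposition's content reduces to checking the grading bookkeeping in this specific $(R,A)$.
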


\begin{proof}
We will define an auxiliary Frobenius system $\F'$ and show that $\F \cong \F' \cong \F^*$. Let $1^*, X^* \in A^*$ denote the dual vectors to $1$ and $X$, so that $1^*(y) = \delta_{1,y}$, $X^*(y) = \delta_{X,y}$, for $y\in \{1, X\}$. Structure maps for $\F^*$ are given below. 
\begin{align}
\begin{aligned}
    \iota^* \: A^* & \to R   & m^* \: A^*  & \to A^* \o A^*  \\
    1^* & \mapsto 1  & 1^*  &  \mapsto 1^* \o 1^* - UV X^* \o X^* \\
    X^* & \mapsto 0   & X^* & \mapsto 1^*\o X^* + X^* \o 1^* + (U+V) X^* \o X^* 
    \end{aligned}
\end{align}
\vskip1em
\begin{align}
\begin{aligned}
  \varepsilon^* \: R & \to A^* & \Delta^* \: A^* \o A^* & \to A^*  \\
   1 & \mapsto X^* & 1^* \o 1^* & \mapsto -(U+V) 1^* - UV X^* \\
   & & X^* \o 1^* , 1^* \o X^* & \mapsto   1^*  \\
   & & X^* \o X^* & \mapsto X^*
    \end{aligned}
\end{align}
Define a Frobenius system $\F'$ whose underlying $R$-algebra is \[
A' = R[X]/((X+U)(X+V)),
\]
unit the natural inclusion $R\hookrightarrow A'$, trace $\varepsilon'(1) = 0, \varepsilon'(X)=1$, and comultiplication 
\[
\Delta'(1) = (X+U)\o 1 + 1\o (X+V),\ \ \ \Delta'(X) = X\o X - UV 1\o 1.
\]
Note that $\F'$ is just $\F$ with $U$ and  $V$ replaced with $-U$ and $-V$, respectively. From the above structure formulas, we see that\footnote{This was observed in \cite[Example 2]{Khfrobext}.} $\F^* \cong \F'$ via $1^* \mapsto X$, $X^* \mapsto 1$. 

Next, consider the $R$-algebra isomorphism $\varphi \: A \to A'$ given by $\varphi(1) = 1$, $\varphi(X) = X+ U + V$. We will show that $\varphi$ yields an isomorphism $\F' \cong \F$. The unit and co-unit identities are clear. For comultiplication, we have
\begin{align*}
    \Delta'\circ \varphi (1)    &= 1\o X + X\o 1 + (U+V)1\o1 \\  
    &= 1\o (X+U+V) + (X+U+V) \o 1 - (U+V) 1\o 1 \\
     &= (\varphi\o \varphi) \circ \Delta(1)   \\
    \Delta' \circ \varphi(X) &= X\o X - UV 1\o 1 + (U+V) \left[1\o X+ X\o 1 + (U+V) 1\o 1 \right]\\
    &=X \o X + (U+V) \o X + X \o (U+V) + (U+V)^2 1\o 1 - UV 1\o 1 \\
    &= (X+U+V)\o (X+U+V) -UV 1\o 1 \\
    &= (\varphi \o \varphi) \o \Delta(X) 
\end{align*}
which completes the proof. 
\end{proof}

\begin{remark}
The proof of Proposition \ref{prop:self dual} also shows that $(R^{\rm sym},A^{\rm sym})$ as defined in Remark \ref{rem:U(2) pair} is self-dual via $1\mapsto X^*$, $X\mapsto 1^* + E_1 X^*$. 
\end{remark}

\subsubsection{Graded duals}

We pause to recall some notions regarding gradings. Given a graded $R$-module $M$, let $M^* = \Hom_R(M,R)$ denote its dual and $M^*_{\gr} \subset M^*$ the graded dual, 
\[
M^*_{\gr} := \bigoplus_{i\in\Z} \Hom_{i}(M,R),
\]
where $\Hom_i(M,R)$ is the abelian group of all $R$-linear maps $M\to R$ of degree $i$. Note that $M^*_{\gr}$ is a graded $R$-module and that $M^*_{\gr} =M^*$ when $M$ is finitely generated. Moreover, if $M$ is free with homogeneous basis $\{m_1, \ldots, m_k\}$, then the dual elements $\{m^*_1, \ldots, m^*_k\}$ form a homogeneous basis for $M^*_{\gr} = M^*$, with $\deg(m_i^*) = - \deg(m_i)$. Chain groups in Khovanov homology are finitely generated and free over the ground ring, so we will omit the subscript $\gr$ in the graded dual. 

Denote by $\gamma \: A\to A^*$ the isomorphism constructed in the proof of Proposition \ref{prop:self dual}, given by $\gamma(1) = X^*$, $\gamma(X) = 1^* + (U+ V)X^*$. 
Note that the isomorphism $\gamma \: A\to A^*$ is grading preserving.

\subsubsection{Mirror complex}

Given a link diagram $D$, denote by $\b{D}$ its mirror image diagram; if $D$ is based at $p$, let $\b{p}$ be the image of $p$ in $\b{D}$. 
Let $\CKh(D)^!$ be the chain complex given by
$\left(\CKh^{-i}(D)\right)^*$ in homological grading $i$ and whose differential is obtained by dualizing the differential in $\CKh(D)$. Using well-known arguments \cite[Section 7.3]{Kh1}, Proposition \ref{prop:self dual} implies the following. We recall the argument since it will be used in Proposition \ref{prop:reduced mirror}.

\begin{corollary}
\label{cor:dual of mirror iso}
There is an isomorphism $\CKh(D) \cong  \CKh(\b{D})^!$ of chain complexes of graded $R$-modules.
\end{corollary}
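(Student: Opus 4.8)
The plan is to follow the classical argument of Khovanov \cite[Section 7.3]{Kh1}, using the self-duality isomorphism $\gamma \: A \to A^*$ produced in the proof of Proposition \ref{prop:self dual} (with $\gamma(1) = X^*$, $\gamma(X) = 1^* + (U+V)X^*$) in place of the self-duality of $A_0 = \bF[X]/(X^2)$ used in the original.

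First I would set up the combinatorial correspondence between the two cubes of resolutions. Mirroring interchanges the $0$- and $1$-smoothing at every crossing and turns positive crossings into negative ones, so $\b{n}_\pm = n_\mp$, and for each $u \in \{0,1\}^n$ the resolution $\b{D}_{\b u}$ (where $\b u = (1,\dots,1)-u$) is the planar mirror of $D_u$; in particular it has the same circles, with a fixed bijection of components. Under this correspondence an edge of the $D$-cube from $u$ to $v$ realized by a merge (edge map the multiplication $m$) corresponds to the edge of the $\b D$-cube from $\b v$ to $\b u$, which is realized by the opposite split (edge map the comultiplication $\Delta$), and vice versa.

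Next I would write down the map. For a resolution $D_u$ with $k$ circles, identify $\F(D_u) = A^{\o k}$ and define $\Gamma_u \: \F(D_u) \to \F(\b D_{\b u})^* = (A^{\o k})^* \cong (A^*)^{\o k}$ to be $\gamma^{\o k}$, using the component bijection above and the canonical identification of the dual of a tensor product of finitely generated free $R$-modules. Assembling the $\Gamma_u$ over all $u$ gives an $R$-linear map $\CKh(D) \to \CKh(\b D)^!$, and three points must be checked. (i) It is a chain map: on the edge $u \to v$ the relevant square commutes because, by Proposition \ref{prop:self dual}, $\gamma$ intertwines $m$ with $\Delta^*$ and $\Delta$ with $m^*$, so on a merge edge $\Gamma\circ m = \Delta^*\circ \Gamma$ and on a split edge $\Gamma\circ\Delta = m^*\circ\Gamma$ — exactly the maps occurring in the dualized $\b D$-differential. (ii) It respects both gradings: $\gamma$ is grading preserving, and a short bookkeeping computation shows the shifts agree; the generator at $D_u$ sits in homological grading $\lr{u} - n_-$ and the dual of the corresponding generator sits in grading $\b n_+ - \lr{\b u} = n_- - (n-\lr u) = \lr u - n_-$ in $\CKh(\b D)^!$, and similarly for $\gr_q$ after using $\b n_\pm = n_\mp$ and $\gr_q$ of a dual basis vector being minus that of the original. (iii) Signs: one must reconcile the edge-sign assignments $s_{u,v}$ on the two cubes, which can be arranged directly, or one appeals to the fact that any two anticommutative sign assignments yield isomorphic complexes. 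Since each $\Gamma_u$ is an isomorphism, the assembled map is one.

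The main obstacle is purely bookkeeping — tracking the homological and quantum grading shifts through the dualization together with the $n_+ \leftrightarrow n_-$ swap, and matching the sign conventions on the two cubes of resolutions. Conceptually the statement is immediate once one has Proposition \ref{prop:self dual} (self-duality of $\F$) and the observation that mirroring dualizes the cube of resolutions.
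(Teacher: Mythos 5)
Your proof is correct and follows essentially the same approach as the paper's: complement the cube vertex $u\mapsto\b u$, apply $\gamma^{\otimes k_u}$ at each resolution, use Proposition \ref{prop:self dual} to see the maps assemble into a chain map, and check gradings using $\b n_\pm = n_\mp$. You are slightly more explicit than the paper about the sign-reconciliation step (the paper absorbs it into "the $\gamma_u$ assemble into a chain map"), but this is a presentation difference rather than a different route.
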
 

\begin{proof}
The isomorphism is defined on each vertex in the cube of resolutions. Let $n$ denote the number of crossings of $D$, $n_+$ of which are positive and $n_-$ of which are negative. Let $\b{n}_+ = n_-$, $\b{n}_- = n_+$ denote the number of positive and negative crossings in $\b{D}$. For  $u\in \{0,1\}^n$, let $\b{u} \in \{0,1\}^n$ denote the $n$-tuple obtained by swapping all $0$'s to $1$'s and all $1$'s to $0$'s. We have $D_u = \b{D}_{\b{u}}$.  The smoothing $D_u$ contributes to the homological grading $\lr{u} - n_-$ part of $\CKh(D)$, while $\b{D}_{\b{u}}$ contributes to the homological grading $\lr{\b{u}} - \b{n}_-  = - (\lr{u} - n_-)$ part of $\CKh(\b{D})$. Letting $k_u$ denote the number of circles in $D_u$, the isomorphism $\CKh(D) \cong \CKh(\b{D})^!$ is obtained by applying the isomorphism
\[
\gamma_u \: A^{\o k_u} \to (A^*)^{\o k_u}
\]
induced by $A \cong A^*$ at each smoothing. Proposition \ref{prop:self dual} implies that the $\gamma_u$ assemble into a chain map. Finally, one can check that $\gamma_u$ is grading preserving after accounting for quantum grading shifts.
\end{proof}

\begin{proposition}
\label{prop:reduced mirror}
There is an isomorphism $\CKhred(D,p) \cong \CKhred(\b{D},\b{p})^!$ of chain complexes of graded $R$-modules. 
\end{proposition}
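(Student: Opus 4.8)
The plan is to deduce this from the non-reduced self-duality $\CKh(D)\cong\CKh(\b{D})^!$ of Corollary~\ref{cor:dual of mirror iso}, by tracking what that isomorphism does to the reduced subcomplex on the marked tensor factor. Write $f_p$ for the endomorphism of $\CKh(D)$ given by multiplication by $X-U$ on the marked circle, so that $\CKhred(D,p)=\im(f_p)\{1\}$. The first step is to realise $\CKhred(D,p)$, up to a grading shift, as a quotient of $\CKh(D)$. On the marked factor $\im(f_p)=R\langle X-U\rangle$ and, using the relation $(X-U)(X-V)=0$ and the fact that $A$ is free of rank two, $\ker f_p = R\langle X-V\rangle$; thus $K'':=\ker f_p$ is the subcomplex of $\CKh(D)$ spanned by the Khovanov generators whose marked circle is labelled $X-V$ (in the basis $\{1,X-V\}$ on the marked factor), while $\im(f_p)$ is the subcomplex spanned by those labelling the marked circle $X-U$. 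Since multiplication by $X-U$ is $\gr_q$-homogeneous of degree $-2$, the induced map $\CKh(D)/K''\xrightarrow{\sim}\im(f_p)$ is a degree $-2$ isomorphism of complexes of graded $R$-modules, so that
\[
\CKhred(D,p)\;=\;\im(f_p)\{1\}\;\cong\;\bigl(\CKh(D)/K''\bigr)\{-1\}.
\]

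Next I would push this through $\gamma$. Recall from the proof of Proposition~\ref{prop:self dual} the grading-preserving chain isomorphism $\CKh(D)\cong\CKh(\b{D})^!$ built by applying $\gamma\colon A\to A^*$, $\gamma(1)=X^*$, $\gamma(X)=1^*+(U+V)X^*$, at every resolution (this is precisely the map of Corollary~\ref{cor:dual of mirror iso}). On the marked factor one computes $\gamma(X-V)=1^*+UX^*$, and $1^*+UX^*$ spans $\operatorname{Ann}\!\bigl(R\langle X-U\rangle\bigr)\subseteq A^*$. Hence $\gamma$, applied factor-wise, carries $K''$ isomorphically onto the subcomplex $\operatorname{Ann}(\b{S})\subseteq\CKh(\b{D})^!$, where $\b{S}\subseteq\CKh(\b{D})$ is the subcomplex spanned by the Khovanov generators of $\b{D}$ whose marked circle is labelled $X-U$; note $\b{S}\{1\}=\CKhred(\b{D},\b{p})$. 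Passing to quotients, $\gamma$ induces an isomorphism of complexes of graded $R$-modules $\CKh(D)/K''\cong\CKh(\b{D})^!/\operatorname{Ann}(\b{S})=\b{S}^{!}$. Combining with the previous display and the identity $(M\{1\})^!=M^!\{-1\}$, I get
\[
\CKhred(D,p)\;\cong\;\b{S}^{!}\{-1\}\;=\;(\b{S}\{1\})^!\;=\;\CKhred(\b{D},\b{p})^!,
\]
which is the claim.

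The remaining points are routine: that $f_p$ is a chain map, so that $\im(f_p)$ and $K''=\ker f_p$ are genuine subcomplexes (standard for basepoint actions, cf.\ \cite[Section~3]{Kh-patterns}); that the single-circle identity $\gamma(R\langle X-V\rangle)=\operatorname{Ann}(R\langle X-U\rangle)$ propagates to the whole chain complex, which is automatic since $\gamma$ is applied on each factor and is already known to be a chain map; and that the homological grading matches, which it does because the cube of resolutions is unchanged and Corollary~\ref{cor:dual of mirror iso} already incorporates the homological degree reversal and the swap $n_\pm\leftrightarrow\b{n}_\pm$.

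The one thing to be careful about — and the most likely source of a sign-of-a-shift error — is the bookkeeping of quantum grading shifts: the $\{1\}$ built into the definition of $\CKhred$, the fact that multiplication by $X-U$ lowers $\gr_q$ by $2$ (not by $1$), and the fact that dualising negates grading shifts. Each contributes a shift, and getting any one wrong produces a spurious overall $\{\pm1\}$; tracked correctly they cancel exactly as above, which is why the final statement carries no shift. It may be worth including, as a sanity check, the unknot case, where $\CKhred$ is $R$ in bidegree $(0,0)$ and the mirror is again the unknot, consistent with the shift-free conclusion.
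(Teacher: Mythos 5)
Your proof is correct, and it takes a genuinely different route from the paper's. The paper constructs an explicit vertex-wise map $\til{\gamma}_u\colon R(X-U)\o A^{\o k_u-1}\to (R(X-U))^*\o (A^*)^{\o k_u-1}$, sending $(X-U)\o y_1\o\cdots\mapsto (X-U)^*\o\gamma(y_1)\o\cdots$, and then verifies by hand that it commutes with the two based saddle maps (the diagrams in~\eqref{eq:two cases}). You instead realize $\CKhred(D,p)$ as the shifted quotient $(\CKh(D)/\ker f_p)\{-1\}$ rather than as a subcomplex, invoke the non-reduced Corollary~\ref{cor:dual of mirror iso} wholesale, and observe that the factor-wise $\gamma$ carries the subcomplex $\ker f_p$ (marked circle labelled $X-V$) onto the annihilator of the marked-$X{-}U$ subcomplex $\b{S}\subset\CKh(\b{D})$, so everything collapses to the single rank-one identity $\gamma\bigl(R\langle X-V\rangle\bigr)=\operatorname{Ann}\bigl(R\langle X-U\rangle\bigr)$ together with the formal isomorphism $\CKh(\b{D})^!/\operatorname{Ann}(\b{S})\cong\b{S}^!$ for free modules. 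Unwinding your composite on the marked factor ($X-U\mapsto 1$ in the quotient, then $\gamma(1)=X^*$, then restrict to $\b{S}$, giving $X^*|_{R(X-U)}=(X-U)^*$) reproduces exactly the paper's $\til{\gamma}_u$, so the two isomorphisms coincide; the advantage of your route is that it eliminates the case-by-case commutativity check for the based saddles and makes the logical dependence on Corollary~\ref{cor:dual of mirror iso} and Proposition~\ref{prop:self dual} explicit, at the cost of slightly more abstract bookkeeping (quotients, annihilators, and the three interacting grading shifts, which you track correctly).
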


\begin{proof}
Fix a resolution $D_u$ of $D$, and order its circles $Z_1, \ldots, Z_{k_u}$, with $Z_1$ the marked circle. Then $D_u$ contributes the direct summand $R(X-U) \o A^{\o k_u-1}$ to $\CKhred(D,p)$, while $\b{D}_{\b{u}}$ contributes the direct summand $\left(R(X-U)\right)^* \o \left(A^* \right)^{\o k_u-1}$ to  $\CKhred(\b{D},\b{p})^!$. Consider the $R$-linear isomorphism 
\[
\til{\gamma}_u \: R(X-U) \o A^{\o k_u-1} \to \left(R(X-U)\right)^* \o \left(A^* \right)^{\o k_u-1}
\]
given by $(X-U) \o y_1\o \cdots \o y_{k_u-1} \mapsto (X-U)^* \o \gamma(y_1) \o \cdots \o \gamma(y_{k_u-1})$. 
It is straightforward to verify that $\til{\gamma}_u$ is grading preserving after the appropriate grading shifts are applied. We let $R_U = (X-U)\cdot A$ be the free $R$-module with basis $X-U$. 

We check that $\til{\gamma}_u$ commutes with each saddle map. When the saddle involves unmarked circles, commutativity follows from Proposition \ref{prop:self dual}. It suffices then to check commutativity of the two diagrams in \eqref{eq:two cases}; the left (\resp right) diagram corresponds to a circle merging  with (\resp splitting from) a marked circle. 

   \begin{equation}
    \label{eq:two cases}
         \begin{tikzcd}[column sep = small]
        R_U \o A \ar[r] \ar[d, "\til{\gamma}_u"'] & R_U \ar[d, "\til{\gamma}_u"] \\
         \left(R_U\right)^* \o A^* \ar[r] & \left(R_U\right)^*
    \end{tikzcd}\hskip2em
    \begin{tikzcd}[column sep = small]
    R_U \ar[d, "\til{\gamma}_u"'] \ar[r] & R_U \o A \ar[d, "\til{\gamma}_u"] \\
    \left(R_U\right)^* \ar[r] & \left(R_U\right)^*\o A^*
    \end{tikzcd}
    \end{equation}
The bottom horizontal maps are obtained by dualizing comultiplication and multiplication in the reduced complex when at least one of the circles involved is marked:
\begin{align*}
    \left(R_U\right)^* \o A^* & \to \left(R_U\right)^* & \left(R_U\right)^* & \to \left(R_U\right)^* \o A^* \\
    (X-U)^*\o 1^*& \mapsto  -U (X-U)^* & (X-U)^* &\mapsto (X-U)^* \o (1^* + V X^*).\\
     (X-U)^*\o X^*& \mapsto   (X-U)^* &  &
\end{align*}
Commutativity of the diagrams in \eqref{eq:two cases} follows easily.  
\end{proof}

\begin{remark}
Note that Proposition \ref{prop:reduced mirror} does \emph{not} imply a particular relationship between $\tilde s_t(K)$ and $\tilde s_t(\b{K})$. This is because our ground ring $R = \bF[U,V]$ is not a principal ideal domain, and thus the dualizing functor $\Hom_R(\cdot, R)$ is not exact. Nonvanishing higher $\Ext$ groups prevent us from identifying the homologies of $\CKhred(D,p)$ and $\CKhred(\b{D},\b{p})$ using simple grading reversals, as is the case with Khovanov homologies over PIDs.  
\end{remark}

%----------------------------------------------------
\bibliographystyle{alpha}
\bibliography{main}

\end{document}